\theoremstyle{plain}
\newtheorem{thm}{Theorem}[section]
\newtheorem{cor}[thm]{Corollary}
\newtheorem{lemma}[thm]{Lemma}
\newtheorem{claim}[thm]{Claim}
\newtheorem*{corbridgenumbers}{Corollary~\ref{cor:bridgenumbers}}
\newtheorem*{thmgeneralconstraints}{Theorem~\ref{thm:generalconstraints}}
\newtheorem*{coriff}{Corollary~\ref{cor:iff}}
\newtheorem*{thmteragaitobridgenumbers}{Theorem~\ref{thm:teragaitobridgenumbers}}
\theoremstyle{definition}
\newtheorem{defn}[thm]{Definition}
\newtheorem{remark}[thm]{Remark}
\newcommand{\hatF}{\ensuremath{{\widehat{F}}}}
\newcommand{\hatP}{\ensuremath{{\widehat{P}}}}
\newcommand{\hatR}{\ensuremath{{\widehat{R}}}}
\newcommand{\hatQ}{\ensuremath{{\widehat{Q}}}}
\newcommand{\hatA}{\ensuremath{{\widehat{A}}}}
\newcommand{\calL}{\ensuremath{{\mathcal L}}}
\newcommand{\calT}{\ensuremath{{\mathcal T}}}
\newcommand{\calC}{\ensuremath{{\mathcal C}}}
\newcommand{\comment}[1]{}
\newcommand{\bdry}{\ensuremath{\partial}}
\DeclareMathOperator{\nbhd}{\mathcal{N}}
\newcommand{\R}{\ensuremath{\mathbb{R}}}
\newcommand{\mobius}{M\"{o}bius }
\newcommand{\bn}{\ensuremath{{\mathfrak {b}}}}
\def\eop#1{\hfill\break\rightline{$\square$\ #1}}
\begin{document}

\title{Bridge number and integral Dehn surgery}
\author{K. L. Baker, C. McA. Gordon, and J. Luecke}



\begin{abstract} 
In a $3$-manifold $M$, let $K$ be a knot and $\hatR$ be an annulus which 
meets $K$ transversely.  We define the notion of the
pair $(\hatR,K)$ being caught by a surface $Q$ in the exterior of the
link $K \cup \partial \hatR$. For a caught pair $(\hatR,K)$, we consider 
the knot $K^n$ 
gotten by twisting $K$ $n$ times along $\hatR$  and give a lower bound on 
the bridge number of $K^n$ with respect to Heegaard splittings
of $M$ -- as a function of $n$, the genus of the splitting, and 
the catching surface $Q$. 
As a result, the bridge number of $K^n$ tends to infinity with $n$. 
In application, we look at a family
of knots $\{K^n\}$ found by Teragaito that live in a small Seifert 
fiber space $M$
and where each $K^n$ admits a Dehn surgery giving $S^3$. We 
show that the bridge number of $K^n$ with respect to
any genus $2$ Heegaard splitting of $M$ tends to infinity with $n$.
This contrasts with other work of the authors as well as with the 
conjectured picture for knots in lens spaces that admit Dehn surgeries 
giving $S^3$.
\end{abstract}
\subjclass{Primary 57M25 , 57M27}


\maketitle

\section{Introduction}
One may produce a family $\{K^n\}$ of knots in an orientable $3$--manifold $M$ by Dehn twisting a knot $K = K^0$ along an annulus $\hatR$ that it intersects transversely.  If $\hatR$ may be isotoped to lie in a genus $g$ Heegaard surface then $\bn_g(K^n)$, the minimal bridge number of $K^n$ among genus $g$ Heegaard splittings of $M$, is bounded. In this paper, we give sufficient
conditions to guarantee a converse to this statement. This allows us to give
examples of knots in the $3$-sphere whose integral Dehn surgeries contrast with
the results of \cite{bgl:bnaids} and \cite{bgl:og2hsfs} on non-integral
Dehn surgeries. When performing a Dehn surgery on a knot $K'$ in $S^3$, 
the core of the attached solid torus becomes a knot $K$ in the resulting 
manifold $M$. We refer to $K' \subset S^3$ and $K \subset M$ as 
{\em surgery-duals}. We show that there are knots in the $3$-sphere
for which an integral surgery is the same genus $2$ manifold $M$ and whose 
surgery-duals 
have unbounded bridge numbers with respect to any genus $2$ 
Heegaard splitting of $M$.

\begin{defn}\label{def:twistalongR}
Let $\hatR$ be an annulus embedded in $M$ with $\partial \hatR$ the link
$L_1 \cup L_2$ in $M$. Let $R=\hatR \cap (M - \nbhd(L_1 \cup L_2))$. 
Fix an orientation on $M$ and $\hatR$. This induces
an orientation on $L_i$ and its meridian $\mu_i$. 
Let $\hatR \times [0,1]$ be a product 
neighborhood of $\hatR$ in $M$ so that the corresponding interval orientation on $R \times [0,1]$ corresponds to the meridian orientation of $L_1$. Pick coordinates 
$\hatR=e^{2 \pi i \theta} \times [0,1]$, with $\theta \in [0,1]$, so that
$e^{2 \pi i \theta} \times \{0\}, \theta \in [0,1],$ is the oriented $L_1$. 
Define the homeomorphism
$f_n\colon  \hatR \times [0,1] \to \hatR \times [0,1]$ by $(e^{2 \pi i \theta},s,t) \to 
(e^{2 \pi i(\theta + nt)},s,t)$. Note that $f_n$ restricted to $\hatR \times \{0,1\}$
is the identity. Assume that the knot $K$ in $M$ intersects $\hatR \times
[0,1]$ in $[0,1]$ fibers. Let $K^n$ be the knot in $M$ gotten by applying 
$f_n$ to $K 
\cap (\hatR \times [0,1])$ (and the identity on $K$ outside this region). 
We refer to $K^n$ as {\em $K$ twisted $n$ times 
along $\hatR$}. Furthermore, note that $f_n$ induces a homeomorphism 
$h_n \colon M-\nbhd(L_1 \cup L_2) \to M-\nbhd(L_1 \cup L_2)$ by applying $f_n$
in $R \times [0,1]$ along with the identity outside this neighborhood.  We refer to this homeomorphism $h_n$ of $M-\nbhd(L_1 \cup L_2)$  as {\em $n$ Dehn-twists along the properly embedded annulus $R$}. 
Note that $K^n$ only depends on the isotopy class of $K$ in the complement of $L_1 \cup L_2$. Furthermore, one can check that $f_n,K^n,h_n$ are independent of 
the orientation chosen on $\hatR$. 
\end{defn}   

For an annulus $\hatR$ and knot $K$ in $M$, we say the pair $(\hatR, K)$ is {\em caught} 
if some oriented surface $Q$ properly embedded in the exterior $X = M-\nbhd(K \cup \bdry \hatR)$ intersects both components of $\bdry \nbhd (\partial \hatR)$ in slopes different than the framing induced by  $\hatR$ and with non-trivial homology on each of those components
(Definition~\ref{def:catches}). Lemma~\ref{lem:seifert} shows that it is often the case that
$(\hatR,K)$ is caught.

When $H_1 \cup_{\hatF} H_2$ is a Heegaard splitting of $M$ and $J$ is a knot in
$M$ we denote by $\bn_{\hatF}(J)$ the bridge number of $J$ with respect to this 
splitting (see section~\ref{sec:basics}). Here we allow Heegaard splittings in a manifold with boundary given by a 
union of compression bodies (see section~\ref{sec:spines}). The distance 
between two simple closed curves on a $2$-torus is the minimal geometric
intersection number of the curves up to isotopy (section~\ref{sec:slopes}). 
In section~\ref{sec:proofofmain}, we prove the following 

\begin{thm}\label{thm:generalconstraints}
Let $M$ be a compact, orientable $3$-manifold with (possibly empty) boundary and $K \cup L_1 \cup L_2$ a
link in $M$. Let $\hatR$ be an annulus in $M$ with 
$\partial \hatR= L_1 \cup L_2$. 
Assume $(\hatR,K)$ in $M$ is caught by the surface $Q$ in $X=M - \nbhd(K \cup L_1 \cup L_2)$.
Let $T_K,T_1,T_2$ be the components of $\partial X$ corresponding to $K,L_1,L_2$ respectively.
Fixing an orientation on $M$, let $\mu_i$ be a meridian of $L_i$ on $T_i$ and  $\lambda_i$ be a framing curve coming from $\hatR$. Express the first homology class of a component of $\bdry Q$ on $T_i$ as $p_i [\mu_i] + q_i[\lambda_i]$. Let
$\Delta_K$ be the distance on $T_K$ between a component of $\partial Q$ and a meridian of $K$ (setting $\Delta_K=0$ when
$Q$ is disjoint from $K$).
Let $K^n$ be $K$ twisted $n$ times along $\hatR$ (Definition~\ref{def:twistalongR}).  

Let $H_1 \cup_{\hatF} H_2$ be a genus $g$ Heegaard splitting of $M$.

Then either 
\begin{itemize}
\item[(1).]  $\hatR$ can be isotoped to lie in $\hatF$; or
\item[(2).] There is an essential annulus $A$ properly embedded in $X$ with a boundary component in each of $T_1$ and $T_2$. Furthermore, the slope of $\partial A$ on each $T_i$ is neither that of 
the meridian of $\nbhd(L_i)$ nor that of $\partial Q$; or
\item[(3).]  For each $n$, 
$$\bn_{\hatF}(K^n) \geq 
\frac{\min(|q_1+n p_1|,|q_2-n p_2|)/\max(-36\chi(Q),6) -2g + 1}{2 \max(\Delta_K,1)}$$

\end{itemize}
\end{thm}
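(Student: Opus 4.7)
The plan is to assume that conclusions (1) and (2) of the trichotomy fail and then deduce (3). Put $K^n$ in a minimal bridge presentation with respect to $\hatF$ realizing $b=\bn_{\hatF}(K^n)$, and let $P=\hatF\cap X_n$ where $X_n = M-\nbhd(K^n\cup L_1\cup L_2)$; then $P$ is a genus $g$ surface with $2b$ meridional boundary circles on $T_{K^n}$, cutting $X_n$ into two compression bodies punctured by bridge arcs. Transport $Q$ via the twist homeomorphism $h_n \colon X \to X_n$ (coming from $f_n$) to obtain $Q_n = h_n(Q)$. A direct computation from the formula for $f_n$ shows that on $T_i$ the slope of each boundary component of $Q_n$ becomes $p_i[\mu_i]+(q_i+\epsilon_i n p_i)[\lambda_i]$ with $\epsilon_1=+1$ and $\epsilon_2=-1$, so each such component meets $\mu_i$ in $N_i := |q_i+\epsilon_i n p_i|$ points, matching the $\min$ term in (3).

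Next, isotope $\hatF$ through bridge presentations of $K^n$ and surger/$\bdry$-compress $Q_n$ to minimize $|P\cap Q_n|$, keeping $Q_n$ essential. The negation of (1) and (2) enters crucially here: were the simplification to drive $P$ disjoint from $\hatR$, then $\hatR$ would lie inside a single compression body, and a standard exchange argument—using the caught surface $Q$ to obstruct compression of $\hatR$ along disks whose boundary slopes are forbidden—either isotopes $\hatR$ onto $\hatF$ (case (1)) or else produces an essential annulus between $T_1$ and $T_2$ whose boundary slopes are neither meridional nor equal to those of $\bdry Q$ (case (2)). Hence an essential intersection pattern of $P$ with $Q_n$ must persist, modulo $\bdry$-parallel annuli on $T_{K^n}$ that are accounted for by $\Delta_K$.

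The heart of the argument is then to convert the large meridional intersection $N_i$ on $T_i$ into a proportional number of essential components of $Q_n \cap H_j$ inside each compression body $H_j$, and to bound the total against $-\chi(Q_n) = -\chi(Q)$. Each component is a compact surface with boundary divided between $P$ and the tori; its arc pattern on $T_i$ reflects the slope of $\bdry Q_n$, and a careful count of arc-types in essential planar surfaces inside a compression body (in the spirit of Scharlemann's and Gabai's analyses) caps the number of such components by a constant multiple of $-\chi(Q)$. The factor $36$ emerges from this combinatorial bookkeeping, while the floor $6$ covers the degenerate case $\chi(Q)\geq 0$. Pairing $\bdry Q_n \cap T_{K^n}$ against the $2b$ meridians of $\bdry P$ supplies the factor of $2\max(\Delta_K,1)\cdot b$, and the Euler characteristic $-\chi(P)$ contributes the remaining $-2g+1$ shift, producing an inequality equivalent to (3) after rearrangement.

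The main obstacle will be exactly this conversion of twisted slope data on $T_1$ and $T_2$—which sit far from $P$—into a count of essential interior components of $Q_n$ cut by $P$ in the compression bodies, while simultaneously ensuring that every degeneration of the minimization in the second paragraph falls cleanly into case (1) or case (2) rather than into some unaccounted-for third possibility. The catching condition—nontrivial homology of $\bdry Q$ on each $T_i$ together with non-framing slopes—is precisely what makes the linear-in-$n$ growth of $N_i$ survive the Euler-characteristic accounting and forces the exchange arguments blocking (1) and (2) to produce genuine obstructions rather than trivialities.
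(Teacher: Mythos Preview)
Your high-level outline is reasonable---transport $Q$ by $h_n$ to $Q_n$, read off the twisted slopes $|q_i+(-1)^{i+1}np_i|$, and convert this into a lower bound on $\bn_{\hatF}(K^n)$ via an Euler-characteristic count---and this matches the paper's architecture. But the core step is missing, and in one place the setup is wrong.

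The paper does \emph{not} put $K^n$ alone in bridge position. It puts the entire link $\calL^n=K^n\cup L_1\cup L_2$ in $K^n$--thin position with respect to $\hatF$, so that a suitable level surface $\hatP$ meets $L_1\cup L_2$ as well as $K^n$. This is essential: the counting argument runs through the fat-vertex graphs $G_P,G_Q$, and the vertices of $G_P$ corresponding to $L_1,L_2$ are exactly the points $\hatP\cap (L_1\cup L_2)$. With your setup---only $K^n$ bridged---$P$ has boundary only on $T_{K^n}$, and nothing ties the intersection $P\cap Q_n$ to the slopes on $T_1,T_2$; so the linear-in-$n$ quantity $N_i$ never enters. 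The paper then applies a Gabai-style high/low disk argument (Lemma~\ref{lem:levelPQ}) to locate a level $\hatP$ at which $G_Q$ has no monogons at $V_1$; the remaining alternative of that lemma---$L_1,L_2$ on disjoint level surfaces---forces $\hatR$ onto $\hatF$ (Lemma~\ref{lem:annulusbetween}), which is how alternative~(1) actually arises, not via your ``$P$ disjoint from $\hatR$'' scenario.

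The second gap is the mechanism producing alternative~(2) and the constant $36$. The paper does not count components of $Q_n$ in compression bodies. Instead, it uses two edge-counting claims: first, in $\hatQ$ there are at most $\max(-3\chi(Q),1)$ parallel classes of essential arcs, so some parallel class $\mathcal{E}$ has size at least $m_1\Delta_1/\max(-6\chi(Q),1)$; second, in $\hatP-\nbhd(K)$ the subgraph $G_P(\mathcal{E})$ must contain a parallel pair once each vertex has valence exceeding $6\max(1-\chi,1)$. Two such mutually parallel edges bound bigons $D_Q,D_P$ whose union is either a M\"obius band with meridional boundary (excluded by the presence of $R$) or the annulus $A$ of alternative~(2). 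The factor $36=6\cdot 6$ is exactly the product of these two combinatorial constants. Your reference to ``essential planar surfaces inside a compression body'' and ``Scharlemann/Gabai analyses'' does not substitute for this parallel-edge construction; without it there is no visible route from failure of the inequality to the existence of $A$.
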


To be able use the bound in conclusion $(3)$ of Theorem~\ref{thm:generalconstraints}, one needs to know that conclusions $(1)$ and $(2)$ do not hold. 
If $(1)$ holds then $\{\bn_{\hatF}(K^n)\}$ is a finite set. So assume $(1)$ does
not hold. If $(2)$ holds and $\partial A$ is not isotopic to 
$\partial \hatR$ on $T_1 \cup T_2$, then
$A$ can be used as a catching surface for $(\hatR,K)$. Applying 
Theorem~\ref{thm:generalconstraints} with $Q=A$ will force conclusion
$(3)$ or exhibit a new annulus annulus in $X$ whose boundary is isotopic
to $\partial \hatR$ on $T_1 \cup T_2$ (see 
Lemma~\ref{lem:boundaryslopesofannuli} and the proof of 
Corollary~\ref{cor:iff}). The first gives a lower bound in $n$ on 
$\bn_{\hatF}(K^n)$
in terms of the slopes of $\partial A$. 
On the other hand, if there is an annulus in $X$ whose boundary is isotopic
on $T_1 \cup T_2$ to $\partial \hatR$ then $\{\bn_g(K^n)\}$ 
will be finite (though not necessarily $\{\bn_{\hatF}(K^n)\}$), as described 
in the Corollary~\ref{cor:iff} below. 


Under the assumption that $(\hatR,K)$ is caught, the following gives necessary and sufficient conditions for the family of $\hatR$-twisted knots to have unbounded genus $g$ bridge numbers. 

\begin{cor}\label{cor:iff}
Let $M$ be a compact, orientable $3$-manifold with (possibly empty) boundary and $K \cup L_1 \cup L_2$ be a
link in $M$. Let $\hatR$ be an annulus in $M$ with $\partial \hatR= L_1 \cup L_2$, and let $R$ be the annulus $\hatR \cap (M - \nbhd(L_1 \cup L_2))$ properly
embedded in $M - \nbhd(L_1 \cup L_2)$.
Assume $(\hatR,K)$ in $M$ is caught by the surface $Q$ in $X=M - \nbhd(K \cup L_1 \cup L_2)$.
Let $T_K,T_1,T_2$ be the components of $\partial X$ corresponding to $K,L_1,L_2$ respectively. 
Fixing an orientation on $M$, let $K^n$ be $K$ twisted $n$ times along $\hatR$.

Assume $M$ has a genus $g$ Heegaard splitting.

Then either 
\begin{itemize}
\item[(1).] There is a genus $g$ Heegaard surface of $M$ containing $\hatR$; or
\item[(2).] There is an essential annulus $A$ in $X$ with one component of 
$\partial A$ on
$T_1$ and the other on $T_2$ such that $\partial A$ and $\partial R$ have the 
same slope on $T_1,T_2$; or
\item[(3).] $\bn_{g}(K^n) \to \infty$ as $n \to \infty$.
\end{itemize}
\medskip
Furthermore, if either $(1)$ or $(2)$ hold, then $\{\bn_{g}(K^n)\}$ is a finite 
set.
\end{cor}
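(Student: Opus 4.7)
The plan is to derive Corollary~\ref{cor:iff} by iterating Theorem~\ref{thm:generalconstraints}. Assume that neither (1) nor (2) of the corollary holds; I will show that (3) holds. For each $n$, choose a genus $g$ Heegaard surface $\hatF_n$ realizing $\bn_g(K^n)$, and apply Theorem~\ref{thm:generalconstraints} to $(\hatR,K)$ with catching surface $Q$. Conclusion (1) of the theorem is ruled out by the failure of the corollary's (1). If conclusion (3) of the theorem holds, then since $(\hatR,K)$ is caught the integers $p_1, p_2$ in the expression $p_i[\mu_i] + q_i[\lambda_i]$ for $\bdry Q$ on $T_i$ are both nonzero (by Definition~\ref{def:catches}), so $\min(|q_1 + np_1|, |q_2 - np_2|) \to \infty$ as $n \to \infty$; the bound in the theorem therefore forces $\bn_g(K^n) = \bn_{\hatF_n}(K^n) \to \infty$, which is the corollary's conclusion (3).

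The remaining case is conclusion (2) of the theorem, which produces an essential annulus $A'$ in $X$ with one boundary component on each $T_i$ and slopes differing from both $\mu_i$ and those of $\bdry Q$. If the slopes of $\bdry A'$ on both $T_1$ and $T_2$ equal the framing slopes of $\bdry R$, we are in corollary case (2), a contradiction. Otherwise, Lemma~\ref{lem:boundaryslopesofannuli} is invoked both to handle the ``mixed slope'' scenario on $T_1$ versus $T_2$ and to conclude that $A'$ is a legitimate new catching surface for $(\hatR,K)$. Re-apply Theorem~\ref{thm:generalconstraints} with $Q=A'$: conclusion (1) remains impossible by assumption, and Lemma~\ref{lem:boundaryslopesofannuli} restricts the available slopes of essential annuli between $T_1$ and $T_2$ sufficiently that any annulus $A''$ yielded by conclusion (2) must carry the framing slope of $\bdry R$, again contradicting corollary case (2). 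Hence conclusion (3) of the theorem must hold with $Q=A'$, and $\bn_g(K^n) \to \infty$ by the same argument as before. The main obstacle is this iteration step, specifically the careful use of Lemma~\ref{lem:boundaryslopesofannuli} to cap the number of possible boundary slopes so that at most two applications of the theorem suffice.

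For the ``furthermore'' clause: if (1) holds, the Dehn twist $h_n$ along $R$ can be realized as a self-homeomorphism of $M$ preserving a Heegaard surface $\hatF \supset \hatR$ setwise, so applying it to a minimal bridge presentation of $K$ with respect to $\hatF$ produces a bridge presentation of $K^n$ with the same number of bridges, giving $\bn_g(K^n) \le \bn_{\hatF}(K)$ uniformly in $n$. If (2) holds, the annulus $A$ together with sub-annuli of $T_1 \cup T_2$ yields an annulus $A^*$ in $M - \nbhd(L_1 \cup L_2)$ with $\bdry A^* = \bdry R$; the torus $R \cup A^*$ allows the Dehn twist along $R$ to be modified, by a homeomorphism of $M$ fixing $K$ up to isotopy, into a twist along $A^*$, which leaves $K$ unchanged since $K \cap A^* = \emptyset$. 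Thus the family $\{K^n\}$ falls into finitely many isotopy classes, and $\{\bn_g(K^n)\}$ is finite.
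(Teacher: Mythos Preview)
Your overall strategy matches the paper's: apply Theorem~\ref{thm:generalconstraints} once with $Q$, and if conclusion (2) produces an annulus $A'$ whose boundary slopes differ from $\bdry R$, use $A'$ as a new catching surface and apply the theorem again. The ``furthermore'' arguments are also essentially the paper's (though in case~(1) the map you want is the extension of $f_n$ to $M$, which fixes $\hatF$ pointwise; the map $h_n$ itself does not extend over the solid tori $\nbhd(L_i)$).

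The genuine gap is in the second application of the theorem. You assert that Lemma~\ref{lem:boundaryslopesofannuli} forces any annulus $A''$ produced by conclusion~(2) to have the framing slope of $\bdry R$. That is not what the lemma says. Applied in $X$ to the two incompressible annuli $A'$ and $A''$, whose boundary slopes differ on $T_1 \cup T_2$ by construction, the lemma yields only that $X$ contains $T^2 \times [0,1]$ as a connected summand with $T^2 \times \{0,1\} = T_1 \cup T_2$; it places no further restriction on the slope of $\bdry A''$, and in particular does not force it to equal the slope of $\bdry R$. The paper handles this case by observing that the summand gives a $2$--sphere in $X$ separating $T_K$ from $T_1 \cup T_2$, and then surgering $R \cap X$ along this sphere to produce an essential annulus in $X$ with the same boundary as $R$, thereby landing in case~(2) of the corollary. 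Your argument is missing this construction, and without it the iteration does not terminate in a contradiction. The same issue affects the ``mixed slope'' subcase you mention (where $\bdry A'$ agrees with $\bdry R$ on one $T_i$ but not the other): Lemma~\ref{lem:boundaryslopesofannuli} again only gives the $T^2 \times [0,1]$ summand, not a direct contradiction.
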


Applying this to $M$ with small genus Heegaard splittings we have the following.

\begin{cor}\label{cor:bridgenumbers}
Assume $M$ closed and orientable and let $K \cup L_1 \cup L_2$ be a
link in $M$. Let $\hatR$ be an annulus in $M$ with 
$\partial \hatR= L_1 \cup L_2$, and let $R$ be the annulus $\hatR \cap (M - \nbhd(L_1 \cup L_2))$ properly
embedded in $M - \nbhd(L_1 \cup L_2)$. 
Assume that $(\hatR,K)$ in $M$ is caught. 
Assume there is no properly embedded, essential annulus $A$ in 
$X=M - \nbhd(K \cup L_1 \cup L_2)$ such that 
$\partial A \cap (T_1 \cup T_2)$ is
isotopic to $\partial R \cap (T_1 \cup T_2)$ on $T_1 \cup T_2$.
Fixing an orientation on $M$, let $K^n$ be $K$ twisted $n$ times along 
$\hatR$. 

If $M=S^3$ and $L_1 \cup L_2$ is not the trivial link, then  
$\bn_{0}(K^n) \to \infty$ as $n \to \infty$.

If $M$ is a lens space and $L_1 \cup L_2$ is not a lens space torus link, then 
$\bn_{1}(K^n) \to \infty$ as $n \to \infty$.

If $M$ has Heegaard genus at most $2$, then either 
$\bn_{2}(K^n) \to \infty$ as $n \to \infty$ or
one of the following holds:
\begin{itemize}
\item[(a).] $L_1$ has tunnel number $1$ in $M$ (or bounds a disk in $M$);
\item[(b).] $L_1$is a cable of a tunnel number $1$ knot in $M$ where the slope of the cabling annulus is that of $\bdry R$; or
\item[(c).] $0$--surgery (as framed by $R$) on $L_1$ contains an essential torus.
\end{itemize}
As $L_1$ and $L_2$ are isotopic in $M$, if any of $(a)-(c)$ holds for $L_1$, then it also holds
for $L_2$. 
\end{cor}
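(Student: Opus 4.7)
The plan is to apply Corollary~\ref{cor:iff} with $g = 0, 1, 2$ respectively in the three settings. The standing hypothesis of the present corollary --- that no essential annulus $A \subset X$ has $\partial A$ sharing the slope of $\partial R$ on $T_1 \cup T_2$ --- immediately rules out conclusion~(2) of Corollary~\ref{cor:iff}. So it suffices to show that in each of the three settings, conclusion~(1) of Corollary~\ref{cor:iff} (that $\hatR$ can be isotoped into a genus $g$ Heegaard surface $\hatF$ of $M$) contradicts the stated hypothesis on $L_1 \cup L_2$, or, in the genus $2$ case, forces one of the exceptions (a), (b), (c).

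The two low-genus cases are recognition arguments about annuli in surfaces. A genus $0$ Heegaard surface of $S^3$ is a $2$-sphere, and the boundary of any annulus embedded in a $2$-sphere is a split two-component unlink in $S^3$, contradicting the trivial-link exclusion. For $M$ a lens space, a genus $1$ Heegaard surface is a Heegaard torus $T$, and any annulus in $T$ has boundary two parallel simple closed curves on $T$, i.e., a lens space torus link, again excluded by hypothesis. In each of these situations only conclusion~(3) of Corollary~\ref{cor:iff} can hold, giving the claimed divergence of bridge numbers.

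The main obstacle is the genus $2$ case. Assume $\hatR \subset \hatF$ for a genus $2$ Heegaard surface $\hatF = \bdry H_1 = \bdry H_2$ of $M$, and let $\gamma$ denote the core of $\hatR$, so that $L_1$ and $L_2$ are two parallel copies of $\gamma$ on $\hatF$. I would proceed by case analysis on the position of $\gamma$ relative to the handlebody structure. If $\gamma$ is inessential in $\hatF$ it bounds a disk in $M$, placing us in case~(a). If $\gamma$ is essential in $\hatF$ and \emph{primitive} with respect to some $H_i$ --- the boundary of some meridian disk of $H_i$ meets $\gamma$ in one point --- then standard handleslides together with a destabilization turn the tubed genus $3$ Heegaard splitting of $M \setminus \nbhd(L_1)$ into a genus $2$ splitting, so $L_1$ has tunnel number $1$ in $M$, again case~(a). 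The remaining configurations --- where $\gamma$ is essential but fails to be primitive on either side --- are the delicate part: here the strategy is to extract either a cabling annulus in $M$ exhibiting $L_1$ as a cable of a tunnel number $1$ knot with cabling slope $\partial R$ (case~(b)), or an essential torus surviving in the $\partial R$-framed surgery on $L_1$ (case~(c)). Verifying that this trichotomy genuinely exhausts all possibilities for an annulus on a genus $2$ Heegaard surface is the core of the topological work and the principal obstacle.

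The final assertion that conditions (a)--(c) transfer from $L_1$ to $L_2$ is immediate, since $L_1 \cup L_2 = \bdry \hatR$ is a pair of parallel boundary circles of the annulus $\hatR$, hence $L_1$ and $L_2$ are ambient isotopic in $M$.
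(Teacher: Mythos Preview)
Your reduction to Corollary~\ref{cor:iff} and your treatment of the $g=0$ and $g=1$ cases are correct and match the paper. The final sentence about transferring (a)--(c) from $L_1$ to $L_2$ is also fine.

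For $g=2$ you acknowledge a gap, and your proposed case analysis (inessential / primitive / essential but non-primitive) is not the route the paper takes. The paper's argument is shorter and uses a different dichotomy. With $\hatR \subset \hatF$, set $F = \hatF - \nbhd(L_1)$ and ask whether $F$ is compressible in $M - \nbhd(L_1)$. If it is, a compressing disk lies in one of the handlebodies $H_i$; compressing $\hatF$ along it leaves $L_1$ on the boundary of a solid torus in $H_i$ whose core is a core curve of $H_i$. Thus $L_1$ is a cable of a core of $H_i$, which is either $L_1$ itself (tunnel number $1$, case (a)) or a genuine cable of a tunnel number $1$ knot with cabling slope equal to the $\hatF$-framing $= \bdry R$ (case (b)). If $F$ is incompressible, the Handle Addition Lemma (Lemma~2.1.1 of \cite{cgls:dsok}) applied to each handlebody shows that attaching a $2$-handle along $L_1$ (with the $\hatF$-framing) yields incompressible boundary on each side; hence the $\bdry R$-framed surgery on $L_1$ contains an essential torus, giving case (c).

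Your primitivity criterion is strictly weaker than ``$F$ compressible'': a curve can sit on $\hatF$ with $F$ compressible yet fail to be primitive in either handlebody (exactly the cabling situation of (b)), so your case 3 conflates what are really two distinct phenomena --- non-primitive but compressible (case (b)) versus incompressible (case (c)). The missing ingredient in your outline is the Handle Addition Lemma, which cleanly converts incompressibility of $F$ into an essential torus after surgery and removes the need for any finer classification of curves on a genus~$2$ surface.
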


Beginning with an annulus $\hatR$ and banding $\bdry \hatR$ together in a sufficiently complicated manner, Osoinach produced infinite families of distinct knots in $S^3$ for which the same integral surgery produces the same manifold, 
$M$ \cite{osoinach}. The knots in such a family are related by twisting 
along $\hatR$, and the surgery-duals are related by twisting along an annulus
in $M$. Teragaito adapted this construction to develop an infinite family of distinct knots for which $+4$--surgery produces the same small Seifert fiber space
$M$, \cite{teragaito}.  In section~\ref{sec:teragaito}, we apply 
Corollary~\ref{cor:bridgenumbers} to prove the following
\begin{thm}\label{thm:teragaitobridgenumbers}
Let $\{K'^n\}$ be the Teragaito family of knots in $S^3$. For each $n$, let
$K^n \subset M$ be the $+4$-surgery-dual to $K'^n$ with respect to the Seifert 
framing on $K'^n$.  Then $\bn_0(K'^n) \to \infty$ and 
$\bn_2(K^n) \to \infty$ as $n \to \infty$.   
 \end{thm}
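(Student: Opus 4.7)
The plan is to apply Corollary~\ref{cor:bridgenumbers} twice: once in $S^3$ for the family $\{K'^n\}$, and once in the small Seifert fiber space $M$ for the surgery-dual family $\{K^n\}$. Teragaito's construction (following Osoinach) exhibits an annulus $\hatR \subset S^3$ with $\partial \hatR = L_1 \cup L_2$ a two-component link, such that twisting a base knot $K' = K'^0$ along $\hatR$ produces $\{K'^n\}$, and each $K'^n$ admits a $+4$-surgery (along the Seifert framing) giving the same $M$. Since the surgery is independent of $n$, $\hatR$ descends to an annulus in $M$ (still called $\hatR$) with $\partial \hatR = L_1 \cup L_2 \subset M$, and twisting the surgery-dual core $K = K^0$ along this annulus in $M$ produces $\{K^n\}$.

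The first hypothesis of Corollary~\ref{cor:bridgenumbers}---that $(\hatR, K')$ and $(\hatR, K)$ are caught---should follow from Lemma~\ref{lem:seifert}: a Seifert surface for $L_1 \cup L_2$, arranged transversely to $K'$ (respectively $K$), meets each torus $T_i$ in a curve whose slope is determined by the linking data in the Teragaito diagram and differs from the $\hatR$-framing, yielding the required catching surface $Q$. For the $S^3$ conclusion, $L_1 \cup L_2$ is visibly a non-trivial link in the Teragaito diagram, so the first bullet of Corollary~\ref{cor:bridgenumbers} gives $\bn_0(K'^n) \to \infty$ once the exceptional annulus is excluded. For the genus $2$ conclusion in $M$, since $M$ is a small Seifert fiber space of Heegaard genus $2$, the third bullet applies, and we must further exclude the three exceptional cases for $L_1$: (a) $L_1$ has tunnel number $1$ in $M$; (b) $L_1$ is a cable of a tunnel number one knot along the $\partial R$-slope; (c) the $R$-framed $0$-filling on $L_1$ contains an essential torus. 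Each of these can be verified (or refuted) from the explicit Teragaito surgery description together with the known Seifert fibration of $M$; by the final sentence of Corollary~\ref{cor:bridgenumbers} it suffices to check them for $L_1$ alone.

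The main obstacle will be ruling out the exceptional annulus of conclusion (2) of Corollary~\ref{cor:iff}: namely, an essential annulus $A$ properly embedded in $X = M - \nbhd(K \cup L_1 \cup L_2)$ (and, for the $S^3$ statement, in the analogous $S^3$-exterior) with $\partial A$ isotopic on $T_1 \cup T_2$ to $\partial R$. Such an annulus would trap $\{\bn_g(K^n)\}$ in a finite set and defeat the argument. To exclude it, one analyzes the JSJ decomposition of $X$ using the catching surface $Q$ and the specific Teragaito link: any such $A$ could be cut-and-pasted against $Q$ to produce a surface whose boundary slopes on $T_1, T_2$ match the $\hatR$-framing, contradicting the slope data computed from the diagram. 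Once this exceptional annulus is ruled out in both exteriors, the inequality of conclusion (3) of Theorem~\ref{thm:generalconstraints} forces $\bn_0(K'^n) \to \infty$ and $\bn_2(K^n) \to \infty$ via Corollary~\ref{cor:bridgenumbers}, completing the proof.
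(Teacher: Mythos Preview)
Your overall architecture is right: apply Corollary~\ref{cor:bridgenumbers} to the $S^3$ picture and to the $M$ picture, using that the link exterior $X$ is the same in both. But two of the steps you flag as routine are exactly where the content lies, and your proposed methods for them do not work.

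\medskip
\textbf{Excluding the essential annulus.} Your plan is to rule out an essential annulus $A \subset X$ with $\partial A$ in the $\hatR$-slopes by ``cut-and-paste against $Q$'' to force a slope contradiction. That argument does not go through: cutting and pasting an annulus against a planar $Q$ typically produces surfaces of higher genus or more boundary, and there is no mechanism forcing the boundary slopes to change in a way that contradicts the diagram data. The paper's route is far cleaner and you missed it: Teragaito already proved that the link exterior $X = S^3 - \nbhd(K' \cup L_1 \cup L_2) \cong M - \nbhd(K \cup L_1 \cup L_2)$ is \emph{hyperbolic}. Hence $X$ contains no essential annulus whatsoever, and the annulus hypothesis of Corollary~\ref{cor:bridgenumbers} is satisfied automatically (and there is only one exterior to check, not two). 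Relatedly, the catching surface is not produced abstractly via Lemma~\ref{lem:seifert}: $L_1$ is an unknot in $S^3$, and the explicit disk it bounds, punctured by $K'$ and $L_2$, gives a concrete $3$--punctured disk $Q$ with $\chi(Q) = -2$ whose boundary slopes visibly differ from the $\hatR$-framing.

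\medskip
\textbf{Excluding (a)--(c) for $L_1$ in $M$.} You write that these ``can be verified from the explicit Teragaito surgery description together with the known Seifert fibration of $M$,'' but this is precisely the hard part and takes the paper several pages (Lemma~\ref{lem:verify}). The argument passes through the Montesinos trick to realize $M$ and $M - \nbhd(L_1)$ as double branched covers of explicit tangles, then uses Kobayashi's classification of canonical torus decompositions of genus~$2$ Haken manifolds to show $L_1$ is not tunnel number one, a separate cabling analysis to exclude (b), and a direct tangle computation showing the $\hatR$-framed filling on $L_1$ is $S^1 \times S^2 \# L(j_1,1)$, hence atoroidal. None of this is visible from the Seifert fibration of $M$ alone.
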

 
\begin{remark}
This is in sharp contrast to what occurs for non-integral surgeries.  
Corollary~1.1 of \cite{bgl:bnaids} shows that if a non-integral surgery on a hyperbolic knot in $S^3$ produces a small Seifert fiber space then the genus $2$ bridge number of the surgery-dual is at most $10975$. Theorem 2.4 of \cite{bgl:og2hsfs} shows that
if $p/q$-surgery, with $|q|>2$, on a hyperbolic knot in $S^3$ produces a manifold $M$
with Heegaard genus $2$, and $M$ contains no Dyck's surface, then the genus $2$
bridge number of the surgery-dual is at most $1$. To further
contrast the results of \cite{bgl:bnaids} and \cite{bgl:og2hsfs}, in 
section~\ref{sec:teragaito} we generalize the Teragaito family to
give other families of knots in the $3$-sphere, where each knot in a family 
admits a surgery giving the same genus $2$ manifold $M$ and where the 
surgery-duals to that family have arbitrarily large genus $2$ bridge numbers
in $M$ (Theorem~\ref{thm:hyperbolicexamples}). Generically these $M$ are hyperbolic manifolds, whereas for the Teragaito family $M$ is Seifert fibered.
In Lemma~\ref{lem:noDycks} we show that infinitely many of these hyperbolic
$M$ do not contain Dyck's surfaces, to support the contrast with Theorem 2.4
of \cite{bgl:og2hsfs}. 

\end{remark}

\begin{remark}
A conjecture of Berge says that if a knot $K'$ in $S^3$ admits a Dehn surgery
which is a lens space, $M$, then the bridge number of the surgery-dual 
$K \subset M$ with respect to a minimal 
genus Heegaard splitting of $M$ is one, i.e. $\bn_1(K)=1$.
Thus Theorem~\ref{thm:teragaitobridgenumbers} contrasts the expected picture
for lens space and small Seifert fiber space surgeries on knots in $S^3$. 
\end{remark}

\begin{remark}
Question~3.1 of \cite{mmm} asks if an integral surgery on a hyperbolic knot in $S^3$ produces a small Seifert fibered space $M$, then does the dual knot embed in a genus $2$ Heegaard surface for $M$. Teragaito showed that the dual knots to his 
examples answered this question in the negative -- that the dual knots do not
lie on a genus $2$ Heegaard surface. Theorem~\ref{thm:teragaitobridgenumbers} 
shows that in fact these knots have arbitrarily large bridge number
with respect to genus $2$ splittings of $M$.
\end{remark}

\begin{remark}
Teragaito also describes a related second infinite family of distinct knots for which $+4$--surgery always produces a certain small Seifert fibered space, \cite{teragaito}.  We conclude section~\ref{sec:teragaito} by showing that the set of genus $2$ bridge numbers of the knots surgery-dual to Teragaito's second family is bounded (Theorem~\ref{thm:secondfamily}).
\end{remark}

\subsection{Acknowledgements}
This work was partially supported by grant $\#209184$ to Kenneth L. Baker from
the Simons Foundation. The authors would like to thank Sean Bowman for
helpful conversations.

\section{Bounding Bridge Numbers}
\subsection{Slopes and surgeries}\label{sec:slopes}
A {\em slope} is an isotopy class of unoriented simple closed curves on a torus. We also say the slope of a collection of isotopic simple closed curves on a torus is the slope of any individual curve. The {\em distance} of two slopes $\alpha, \beta$ is the minimal geometric intersection number among curves representing these classes and is denoted $\Delta(\alpha,\beta)$.  Let $\mu$ be the meridianal slope of a knot $K$ in a manifold $M$.  Dehn surgery on a $K$ along a slope $\gamma$ is {\em integral} or {\em longitudinal} if $\Delta(\mu,\gamma)=1$, {\em non-integral} if $\Delta(\mu,\gamma)>1$, and {\em trivial} if $\Delta(\mu,\gamma)=0$.  In the surgered manifold, the core of the attached solid torus is the {\em surgery-dual} of $K$.

\subsection{Spines and core curves of handlebodies and compression bodies}\label{sec:spines}

A {\em spine} $\Gamma$ of a handlebody $H$ is a properly embedded graph such that $H-\Gamma \cong \bdry H \times (-\infty,0]$.  For a compression body $H$ with $\bdry H$ partitioned as $\bdry_+ H \cup \bdry_- H$ and $\bdry_+ H$ connected, then a {\em spine} $\Gamma$ of $H$ is a properly embedded graph  disjoint from $\bdry_+ H$ such that $H-(\Gamma \cup \bdry_- H) \cong \bdry_+ H \times (-\infty,0]$. 

An embedded closed curve $C$ in the interior of a handlebody or compression body $H$ is a {\em core curve} (or just {\em core}) if there is a spine $\Gamma$ of $H$ such that $C$ may be isotoped into $\Gamma$. Note that for a core $C$ of $H$, $H-\nbhd(C)$ is a compression body.  When $H$ is a solid torus, we usually speak of {\em the} core since all core curves are isotopic.

\subsection{Heegaard splittings, thin position, and bridge position}\label{sec:basics}
In this paper, a Heegaard splitting will always be a $2$--sided Heegaard 
splitting. In particular, a {\em Heegaard splitting} of a $3$--manifold with boundary,
$Y$, is the writing of $Y$ as the union of two compression bodies $H_1$ and $H_2$ along their boundary components $\bdry_+ H_1$ and $\bdry_+ H_2$. 
The shared boundary of these 
compression bodies is the {\em Heegaard surface} of the splitting. 
Given such a Heegaard surface 
$S$ of $Y$ there is a product $S \times \R \subset Y$ so that $S = S \times \{0\}$ and the complement of the product is the union of a pair of spines 
of the two compression bodies along with $\partial Y$.
This defines a height function on the complement in $Y$ of $\partial Y$ and  
the spines of the compression bodies.  Consider all the circles $C$ embedded in the product that are Morse with respect to the height function and represent the knot type of a particular knot $J$.  The following terms are all understood to be taken with respect to the Heegaard splitting.

Following \cite{gabai:fatto3mIII} (see also \cite{scharlemann}), the {\em width} of an embedded circle $C$ is the sum of the number of intersections $|C \cap S \times \{y_i\}|$ where one regular value $y_i$ is chosen between each pair of consecutive critical values of $C$.  The {\em width} of a knot $J$ is the minimum width of all such embeddings.  An embedding realizing the width of $J$ is a {\em thin position} of $J$, and $J$ is said to be {\em thin}.  

The minimal number of local maxima among Morse embeddings is the 
{\em bridge number} of $J$, and denoted $\bn_S(J)$, or, if $S$ is understood, 
$\bn(J)$.  
An embedding realizing the bridge number of $J$ 
may be ambiently isotoped so that all local maxima lie above all 
local minima, without introducing any more extrema.  
The resulting embedding is a {\em bridge position} of $J$, and 
$J$ is said to be {\em bridge}.
For a fixed genus $g$ of Heegaard splittings of $Y$, let $\bn_g(J)$
be the minimum bridge number of $J$ among genus $g$ Heegaard splittings of $Y$.

By definition, bridge numbers are positive. It is common to say that if $J$ can be isotoped
to lie on $S$ then $\bn_S(J)=0$. We will not use that terminology in this paper -- for such a
knot we take $\bn_S(J)=1$. That is, bridge and thin presentations
of a knot or link will always be Morse with respect to the given height function.

The definition of thin position extends to links.  If $K$ is a sublink of the link $J$, then a {\em $K$--thin position} of $J$ (with respect to the Heegaard splitting) is a thinnest (least width) position of $J$ among those that restrict to a thin position of $K$.

%
%
%
%
%

\subsection{$Q$ catches $(\hatR,K)$}\label{sec:lemmas}
Let $\hatR$ be an annulus embedded in the interior of an 
orientable $3$--manifold $M$ with 
$\bdry \hatR = L_1 \cup L_2$.   Let $K$ be a knot in $M$ disjoint 
from $L_1 \cup L_2$ and transverse to $\hatR$.
Write $\calL = K \cup L_1 \cup L_2$, let $X=M-\nbhd(\calL)$ be the exterior of the link $\calL$, and set $R=\hatR \cap (M - \nbhd(L_1 \cup L_2))$. Let
$T_K,T_1,T_2$ be the torus 
components of $\partial X$ corresponding to $K,L_1,L_2$ respectively.   

\begin{defn}\label{def:catches}
Let $Q$ be an oriented (possibly disconnected) surface, properly embedded in 
$X$ with no disk components or closed components. Furthermore, 
assume that if $Q$ has annular
components then $Q$ is a single annulus. We say that
$Q$ {\em catches} the pair $(\hatR,K)$ if 
\begin{itemize}
\item $\bdry Q \cap T_i$ is a non-empty collection of coherently oriented 
parallel curves on $T_i$ for each $i \in \{1,2\}$; and 
\item $\partial Q$ intersects both $T_1$ and $T_2$ in slopes different
than $\partial R$.  
\end{itemize}
\end{defn} 
We say the pair $(\hatR,K)$ is {\em caught} if it has a catching surface.

\begin{remark} Let $Q$ be a catching surface for $(K,\hatR)$. By discarding components, we 
may assume that each component
of $Q$ has some boundary component in $T_1 \cup T_2$. We may in fact assume 
that $Q$ has at most two components, and when $Q$ has two then one of these components is disjoint
from $T_1$ and the other disjoint from $T_2$. Note that if there were a disk
in $X$ with boundary on $T_1 \cup T_2$ then its boundary would have to be 
parallel to a component of the boundary of $\hatR$. If there were an annulus
in $X$ with only one boundary component on $T_1 \cup T_2$, then the existence
of $\hatR$ implies that the other must be on $T_K$. If there were two such
annuli, one with a boundary component on $T_1$, the other with a 
boundary component on $T_2$, these annuli could be used to construct a 
a single annulus with boundary in $T_1 \cup T_2$.  
\end{remark}

\begin{remark}
When $M$ is closed, the Half Lives, Half Dies Lemma says that the image of 
$\bdry_* \colon H_2(X,\bdry X) \to H_1(\bdry X)$ 
has half the rank of $H_1(\bdry X)$, e.g.\ Lemma~3.5 of \cite{hatcher:3mflds}.
This guarantees that there is a $Q$ such that
 \begin{itemize}
\item the components of $\bdry Q$ are coherently oriented parallel curves on the components of $\bdry X$ and
\item $[\bdry Q]$ is not a multiple of $[\bdry R]$ in $H_1(\bdry X)$
\end{itemize}

\end{remark}

\begin{defn}
Given a knot $K$ in a closed $3$--manifold $M$, we say an orientable surface $\Sigma$ with boundary that is properly embedded in $M-\nbhd(K)$ is a {\em generalized Seifert surface} for $K$ if $\bdry \Sigma$ is a collection of coherently oriented parallel curves on $\bdry \nbhd(K)$ once $\Sigma$ is oriented.  By the Half Lives, Half Dies Lemma, every such knot $K$ has a generalized Seifert surface.  Note that the boundary of a generalized Seifert surface may be a collection of meridianal curves.
\end{defn}

\begin{lemma}\label{lem:seifert}
A pair $(\hatR,K)$ in a closed 3-manifold $M$ is not caught if and only if $L_1$ has a generalized Seifert surface disjoint from $L_2$ and $K$ has a  generalized Seifert surface disjoint from either $L_1$ or $L_2$.
\end{lemma}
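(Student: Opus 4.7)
Apply the Half Lives, Half Dies principle to $X = M - \nbhd(K \cup L_1 \cup L_2)$. Since $M$ is closed and orientable and $\bdry X = T_K \cup T_1 \cup T_2$ consists of three tori, the image $\Lambda$ of $\bdry_* \colon H_2(X,\bdry X;\Q) \to H_1(\bdry X;\Q) \cong \Q^6$ is a Lagrangian subspace of dimension $3$ with respect to the intersection form. Choose the basis $\mu_K,\lambda_K,\mu_1,\lambda_1,\mu_2,\lambda_2$ with $\lambda_i = \bdry \hatR$ on $T_i$ for $i=1,2$, and let $\phi_i(v)$ denote the $\mu_i$-coefficient of $v$. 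Under the intersection pairing $\phi_i(v) = v \cdot \lambda_i$, and self-perpendicularity of $\Lambda$ yields the identity $\phi_i \equiv 0$ on $\Lambda \iff \lambda_i \in \Lambda$. Any catching surface $Q$ has $[\bdry Q] \in \Lambda$ with both $\phi_1([\bdry Q]) \neq 0$ and $\phi_2([\bdry Q]) \neq 0$ (the slopes on $T_1, T_2$ must differ from $\bdry \hatR$), so if $\phi_1$ or $\phi_2$ vanishes on $\Lambda$ then $(\hatR, K)$ is not caught. The converse, which I isolate as the main technical obstacle, requires upgrading a Lagrangian class $v$ with $\phi_1(v), \phi_2(v) \neq 0$ into an honest catching surface (obeying the no-disk, no-closed, annular-equals-single-annulus constraints of Definition~\ref{def:catches}); I expect to handle this via standard surface-simplification arguments, exploiting that the nonzero meridional conditions on $T_1$ and $T_2$ persist under discarding closed components and under compression along compressing disks.

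For the $(\Leftarrow)$ direction, assuming (A) and (B), I produce $\lambda_j \in \Lambda$ for some $j \in \{1,2\}$. The annulus $R$ punctured along $K$ contributes $v_R = \kappa \mu_K + \lambda_1 - \lambda_2 \in \Lambda$, where $\kappa = K \cdot \hatR$. The generalized Seifert surface $\Sigma_1$ from (A) contributes $v_{\Sigma_1} \in \Lambda$ with $T_1$-boundary $n_1(\lambda_1 + k_1 \mu_1)$ for some framing offset $k_1$. Computing $v_R \cdot v_{\Sigma_1} = 0$ yields $n_1 k_1 = 0$, so $k_1 = 0$ and the gen.\ Seifert longitude of $L_1$ in $M - L_2$ already coincides with $\bdry \hatR$. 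Taking $\Sigma_K$ from (B), say disjoint from $L_2$, contributes $v_{\Sigma_K} \in \Lambda$ with zero $T_2$-boundary; the pairings $v_{\Sigma_K} \cdot v_R = 0$ and $v_{\Sigma_K} \cdot v_{\Sigma_1} = 0$ pin down the intersection numbers $\Sigma_K \cdot L_1 = \kappa n_K$ and $\Sigma_1 \cdot K = \kappa n_1$. A direct calculation then simplifies $v_{\Sigma_1} - n_1 v_R$ to $n_1 \lambda_2$, placing $\lambda_2 \in \Lambda$ and hence $(\hatR,K)$ not caught; the case $\Sigma_K$ disjoint from $L_1$ is symmetric and places $\lambda_1 \in \Lambda$.

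For the $(\Rightarrow)$ direction, assuming $(\hatR,K)$ is not caught, the converse of the technical step gives $\lambda_j \in \Lambda$ for some $j$; say $j = 2$. Represent $\lambda_2$ by a surface $S \subset X$ with $\bdry S = n \lambda_2$ on $T_2$ alone. Assembling $S$ with $n$ copies of $R$ gives a chain in $M - L_1 - L_2$ with boundary $n \lambda_1$ on $T_1$ and $n \kappa$ meridians on $T_K$; capping those meridians with meridian disks of $K$ in $\nbhd(K)$ produces a generalized Seifert surface for $L_1$ in $M - L_2$, establishing (A). For (B), any third class $v_3 \in \Lambda$ independent of $v_R$ and $\lambda_2$ necessarily has nonzero $\lambda_K$-coefficient, and the Lagrangian relations $v_3 \cdot \lambda_2 = 0$ and $v_3 \cdot v_R = 0$ force its $\mu_2$-coefficient to vanish and its $\mu_1$-coefficient to equal $\kappa$ times the $\lambda_K$-coefficient; after subtracting suitable multiples of $v_R$ and $\lambda_2$, the resulting class has zero $T_2$-boundary and purely meridional $T_1$-boundary, and capping those meridians with meridian disks of $L_1$ in $\nbhd(L_1)$ yields a generalized Seifert surface for $K$ in $M - L_2$, establishing (B). The case $j = 1$ is treated identically with the roles of $L_1$ and $L_2$ interchanged.
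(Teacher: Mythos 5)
Your $(\Leftarrow)$ direction is sound and is essentially the paper's argument in different clothing: the paper observes that $[\bdry \Sigma_1']$, $[\bdry \Sigma_K]$, $[\bdry R]$ span a rank-$3$ subgroup of the image of $\bdry_*$ whose intersection with $H_1(T_2)$ lies in $\langle[\lambda_2]\rangle$, so a catching class would push the rank to $4$ and violate Half Lives, Half Dies; your version extracts $\lambda_j\in\Lambda$ and invokes $\Lambda^\perp=\Lambda$, which is the same computation. (Two small case issues there: if $\bdry\Sigma_K\cap T_K$ is meridianal your pairings do not pin down $\Sigma_1\cdot K=\kappa n_1$, but then $\mu_K\in\Lambda$ and $v_{\Sigma_1}-a\mu_K$ puts $\lambda_1\in\Lambda$ directly; similarly, in your $(\Rightarrow)$ construction of (B) the claim that a third independent class must have nonzero $\lambda_K$-coefficient fails exactly when $\mu_K\in\Lambda$, in which case (B) is immediate. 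Both are easily patched.)

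The genuine gap is the step you yourself flag as ``the main technical obstacle'' and then do not carry out: upgrading a class $v\in\Lambda$ with $\phi_1(v),\phi_2(v)\neq 0$ to an honest catching surface. Your entire $(\Rightarrow)$ direction rests on its contrapositive (``not caught $\Rightarrow$ $\lambda_j\in\Lambda$''), so without it the forward implication is unproved. The realization is not as routine as ``discard closed components and compress'': Definition~\ref{def:catches} also forbids disk components and imposes the single-annulus normalization, and a compression or a discarded disk component with essential boundary on $T_1$ or $T_2$ can change the boundary class, so the conditions $\phi_1\neq 0$, $\phi_2\neq 0$ do not obviously ``persist'' as you assert. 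The paper avoids this entirely by running $(\Rightarrow)$ in the opposite logical order: if (A) fails, it starts from a generalized Seifert surface $\Sigma_1$ for $L_1$ --- which exists by Half Lives, Half Dies and is already normalized, with coherently oriented boundary on $T_1$ of slope necessarily different from $\bdry R$ --- chooses it to meet $L_2$ in a nonempty coherent family of meridians (possible precisely because it cannot be made disjoint from $L_2$), and observes that $\Sigma_1\cap X$ is then a catching surface; the failure of (B) is handled identically with $\Sigma_K$. If you want to keep your homological framework, you should either prove the realization statement in full (addressing disks, closed components, and coherence of boundary on each torus separately) or, more economically, replace your $(\Rightarrow)$ argument with this direct construction from generalized Seifert surfaces. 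Your derivation of (A) and (B) from $\lambda_j\in\Lambda$ (splicing $S$ with copies of $R$ and capping meridians) is correct but then becomes unnecessary.
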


\begin{proof}

If $L_1$ does not have a generalized Seifert surface disjoint from $L_2$, then there exists one, say $\Sigma_1$, which is transverse to $L_2$ and $K$ such that when oriented $\Sigma_1 \cap \bdry \nbhd(L_2)$ is a non-empty collection of coherently oriented meridians of $L_2$.   Since the boundary slope of $\Sigma_1$ on $\bdry \nbhd(L_1)$ is necessarily different than that of $R$, $Q = \Sigma_1 \cap X$  catches $(\hatR,K)$.

If $K$ does not have a generalized Seifert surface disjoint from either $L_1$ or $L_2$, then there exists one, say $\Sigma_K$, which is transverse to $L_1 \cup L_2$ such that when oriented $\Sigma_K \cap \bdry \nbhd(L_i)$ is a non-empty collection of coherently oriented meridians of $L_i$, $i=1,2$.  Thus $Q = \Sigma_K \cap X$ catches $(\hatR,K)$.

\smallskip

Now assume $\Sigma_1$ is a generalized Seifert surface for $L_1$ that is disjoint from $L_2$ and transversely intersects $K$ and $\Sigma_K$ is a generalized Seifert surface for $K$ that is disjoint from $L_1 \cup L_2$.  Set $\Sigma_1' = \Sigma_1 \cap X$.  Recall that $\hatR$ is an annulus with boundary $L_1 \cup L_2$ that $K$ transversely intersects. Let $\lambda_2 = R \cap T_2$.


Observe that $[\bdry \Sigma_1']$, $[\bdry \Sigma_K]$, and $[\bdry R]$ together generate a rank $3$ subgroup of $H_1(\bdry X)$ whose intersection with $H_1(T_2)$ is generated by $[\lambda_2]$.  
If a surface $Q \subset X$ were to catch $(\hatR,K)$ then together $[Q \cap T_2]$ and $[\lambda_2]$ would generate a rank $2$ subgroup of $H_1(T_2)$.  But then $[\bdry Q]$ with $[\bdry \Sigma_1']$, $[\bdry \Sigma_K]$, and $[\bdry R]$ would generate a subgroup of $H_1(\bdry X)$ of rank at least $4$.  This contradicts the Half Lives, Half Dies Lemma.  Hence $(\hatR,K)$ cannot be caught.
\end{proof}
 
\begin{remark}
Notice that  $L_1$ has a generalized Seifert surface disjoint from $L_2$ if and only if its boundary slope on $\bdry \nbhd(L_1)$ agrees with the boundary slope of $R$.  If $\Sigma_1$ is a generalized Seifert surface for $L_1$ that is disjoint from $L_2$, then we may use copies of $\hatR$ to extend $\Sigma_1$ to a generalized Seifert surface for $L_2$ which an isotopy will make disjoint from $L_1$.  Hence $L_1$ has a generalized Seifert surface disjoint from $L_2$ if and only if $L_2$ has a generalized Seifert surface disjoint from $L_1$.
\end{remark}

\subsection{Combinatorics}
Let $\calL=K \cup L_1 \cup L_2$ be a link in a compact, orientable $3$-manifold
$M$ (possibly with boundary) and $X=M - \nbhd(\calL)$ its
exterior. Let $T_K,T_1,T_2$ be the torus components of $\partial X$ 
corresponding to $K,L_1,L_2$. In this section we assume that $Q$ is a properly
embedded, orientable surface in $X$ such that $\partial Q \cap T_i$ is non-empty and non-meridianal for $i \in \{1,2\}$.

Recall from section~\ref{sec:basics}, that $H_1 \cup_{\hatF} H_2$, 
a genus $g$ Heegaard splitting 
of $M$, gives a height function in the complement of the compression body 
spines and $\partial M$.
With $\calL$ in $K$--thin position with respect to this Heegaard splitting, isotop $Q$ so that in a neighborhood of any local maximum or minimum of $\calL$, $Q$ is below or above $\calL$ respectively, and $\bdry Q$ is transverse to the foliation by level curves on the rest of $T_1 \cup T_2 \cup T_K$ (if the components of $\partial Q \cap T_K$ are meridianal, we take each of these components to be level).  Near components of $\partial M$ we may take $Q$ transverse to the 
level surfaces. We take $Q$ transverse to the compression body spines. 
We may then further isotop $Q$, away from $\partial Q$ and away from the
compression body spines, to be transverse to the level surfaces except at a finite number of points which all occur at distinct levels, distinct from the extrema of $\calL$ too. 

Given any level surface $\hatP$ of this height function away from a critical level of $Q$, set $P = X \cap \hatP$.  By the above isotopy of $Q$, $\bdry Q$ intersects $\bdry P$ minimally on $\bdry X$. 
For such a level surface $\hatP$, form the corresponding pair of labeled {\em fat vertexed graphs of intersection} $G_P$ and $G_Q$, as follows. Define 
$\hatQ$ to be $Q$ with disks attached along the components of $\partial Q \cap (T_K \cup T_1 \cup T_2)$ when $\partial Q \cap T_K$ are not meridians of $K$, and
along the components of $\partial Q \cap (T_1 \cup T_2)$ when 
$\partial Q \cap T_K$ is meridianal.  Then $G_P$ and $G_Q$ are the graphs on the surfaces $\hatP$ and $\hatQ$, respectively, consisting of the {\em fat vertices} that are the disks $\nbhd(\calL) \cap \hatP$ on $\hatP$ and the disks that cap off $\partial Q$
in $\hatQ$, and the {\em edges} that are the arcs of $P \cap Q$. Note that
$\nbhd(K) \cap \hatP$ become vertices of valence $0$ when $\partial Q \cap
T_K$ is meridianal on $K$. {\em Label} the endpoint of an edge in one graph with the vertex of the other graph whose boundary contains the endpoint.

 Fix orientations of $\hatP$ and $\hatQ$.  Two vertices on the same graph and on the same component of $\bdry X$ are {\em parallel} if their corresponding oriented components of $\bdry P$ or $\bdry Q$ are coherently oriented on $\partial X$; they are {\em anti-parallel} otherwise.  The orientability of $P$, $Q$, and $X$ gives the {\em Parity Rule:} An edge connecting parallel vertices on one graph must connect anti-parallel vertices on the other graph.

Let $\Delta_K$, $\Delta_1$, $\Delta_2$ be the distance of slopes of $\bdry P$ and $\bdry Q$ on $T_K,T_1,T_2$.  If $\partial Q$ or $\partial P$ is disjoint from $T_K$, we
set $\Delta_K=0$. Note that if $\partial Q \cap T_K$ is meridianal on $K$, then $\Delta_K=0$.
By assumption, $\Delta_1,\Delta_2$ are non-zero. Set $m_K=|\hatP \cap K| \leq 2\bn_{\hatF}(K)$, $m_1 = | \hatP \cap L_1|$,  $m_2 = | \hatP \cap L_2|$; these are the numbers of vertices in $G_P$ corresponding to $K$, $L_1$, $L_2$ respectively. Number the components of $\partial P$ on a 
component of $\partial X$ in sequence $1, \dots, m_i$. 
 
Let $V_K$, $V_1$, $V_2$ be the sets of vertices of $G_Q$ corresponding to $K$, $L_1$, $L_2$ respectively ($V_K$ is empty when $\partial Q \cap T_K$ is meridianal). The vertices in each of these sets may also be numbered in the order they appear around their component of $\bdry X$.   Observe that a vertex $v \in V_i$ has valence $m_i \Delta_i$ for $i=K,1,2$; in particular, the $m_i$ labels of corresponding vertices in $G_P$ appear in order $\Delta_i$ times around $v$.  

Note that $P \cap Q$ has an arc component which is boundary parallel in $Q$
if and only if $G_Q$ has a monogon face, i.e.\ a face bounded by a fat vertex and single edge of $G_Q$.


\begin{lemma}\label{lem:levelPQ}
Given a Heegaard splitting $H_1 \cup_{\hatF} H_2$ of $M$,
there is a $K$--thin presentation for $\calL$  such that 
one of the following holds:
\begin{itemize}
\item[(A)] There is a level surface $\hatP$ transverse to $Q$ and with non-empty intersection with $L_1 \cup L_2$ such that there is no monogon of $G_Q$ at any vertex of $V_1 \cup V_2$. If the components of $\partial Q \cap T_K$ are meridianal on $K$, then $\hatP$ is disjoint from $\partial Q \cap T_K$.
\item[(B)] There is a level surface $\hatP$ transverse to $Q$ such that for some choice of $\{i,j\}=\{1,2\}$, $m_i \geq m_j=2$ and there is no monogon of $G_Q$ at any vertex of $V_i$.  If the components of $\partial Q \cap T_K$ are meridianal on $K$, then $\hatP$ is disjoint from $\partial Q \cap T_K$.
\item[(C)] $L_1 \cup L_2$ can be isotoped disjointly from $K$ (keeping $K$
fixed) so that $L_1$ and $L_2$ lie on
disjoint copies of $\hatF$.  
\end{itemize}

%
\end{lemma}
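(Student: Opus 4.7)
The plan is a standard thin-position argument driven by monogon faces of $G_Q$ at vertices of $V_1 \cup V_2$. First I would put $\calL$ in $K$-thin position with respect to $H_1 \cup_{\hatF} H_2$ and isotop $Q$ as described in the paragraph preceding the lemma, so that $Q$ is transverse to the height function except at finitely many critical levels (chosen distinct from the extrema of $\calL$ and from the compression body spines) and, in the case $\partial Q \cap T_K$ is meridianal, arrange those boundary curves to be level. At each generic level $\hatP$ we then have the intersection graphs $G_P, G_Q$ described in section 2.4.

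I would sweep through the generic levels and check for conclusion (A) or (B) directly. If some level satisfies $\hatP \cap (L_1 \cup L_2) \neq \emptyset$ with no monogon of $G_Q$ at any vertex of $V_1 \cup V_2$ (and, in the meridianal case, $\hatP$ is disjoint from the level curves $\partial Q \cap T_K$), then (A) holds. If some level realizes $m_i \geq m_j = 2$ for some $\{i,j\} = \{1,2\}$ and carries no monogon at any vertex of $V_i$, then (B) holds. Otherwise I would argue toward (C) using the monogons that must exist by failure of (A).

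A monogon at $v \in V_i$ in $G_Q$ yields a disk $D \subset Q \subset X$ whose boundary is the union of an arc on $\partial Q \cap T_i$ and an edge of $G_P$ lying on $\hatP$ and joining two fat vertices of $V_i^P$. By the Parity Rule these vertices are anti-parallel, so the two corresponding points of $L_i \cap \hatP$ are crossed by $L_i$ in opposite directions. After passing to an innermost sub-disk of $D$ relative to the sweep-out and extending across the solid-torus neighborhood of $L_i$, I would obtain a bridge-cancellation disk for $L_i$ sitting in a single compression body of the splitting. This disk produces an isotopy of $L_i$, supported in $X$ and hence disjoint from $K$ and from the other component of $L_1 \cup L_2$, that either cancels a (local max, local min) pair of $L_i$ when $m_i \geq 4$, or isotops $L_i$ onto $\hatF$ when $m_i = 2$.

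Because $\calL$ is already in $K$-thin position, the max/min cancellation would strictly decrease the width of $\calL$ and is excluded; hence every monogon we produce must occur at a level where $m_i = 2$. The failure of (B) then supplies such a monogon at each of $V_1$ and $V_2$ (at appropriate levels), and iterating the push-onto-$\hatF$ move on $L_1$ and then on $L_2$ yields a position in which both components lie on $\hatF$, hence on disjoint parallel copies of $\hatF$, giving (C). The main obstacle is carrying out the reduction step cleanly: one must pick an outermost monogon and an innermost sub-disk on $D$ to guarantee that the cancellation disk lives in a single compression body, carefully handle the choice of $\hatP$ in the meridianal-$T_K$ case so that the valence-zero vertices from $\hatP \cap T_K$ do not interfere, and check in the (C)-branch that the final isotopies of $L_1$ and $L_2$ can be arranged to land on disjoint copies of $\hatF$ while $K$ remains setwise fixed and in thin position.
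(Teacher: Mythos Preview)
Your outline has the right ingredients but the central reduction step is not correct as stated. A monogon of $G_Q$ at a vertex of $V_i$ gives a disk in $Q$ whose boundary is an arc on $T_i$ together with an arc on $\hatP$; this is a \emph{high} or \emph{low} disk for $L_i$, not a bridge-cancellation disk. A single high (or low) disk only lets you push one extremum of $L_i$ past the level $\hatP$; it does not cancel a (max, min) pair and need not decrease width. To contradict $K$-thinness you need, at some level, a high disk and a low disk that are disjoint (or on different components), and the mechanism that produces such a pair is Gabai's sweep through a \emph{middle slab}: near the top of the slab the monogons are high, near the bottom they are low, and at a transitional level one finds both. This is precisely the argument the paper invokes (Lemma~4.4 of \cite{gabai:fatto3mIII}).

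Consequently your inference ``every monogon we produce must occur at a level where $m_i=2$'' does not follow. What actually blocks thinning is that at the transitional level the high and low disks are \emph{nested} on a single component $L_j$, which forces $m_j=2$ and guides $L_j$ onto $\hatP$; the other component $L_i$ may well have $m_i>2$ there, which is exactly why conclusion (B) only asserts $m_i \geq m_j = 2$ and no-monogons at $V_i$ (not at $V_j$). Finally, for (C) you cannot simply ``iterate the push-onto-$\hatF$ move'': once $L_j$ has been isotoped onto $\hatP$, you must locate a \emph{new} middle slab on the side of $\hatP$ containing $L_i$, in which every level meets $L_i$ but misses $L_j$, and rerun the Gabai argument there to push $L_i$ onto a second level surface $\hatP'$ disjoint from $\hatP$. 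Your sketch does not supply this second slab or explain why the two resulting level surfaces are distinct.
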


\begin{proof}
Take a $K$--thin presentation of $\calL$ with respect to the given splitting.
In this Morse presentation of $\calL$, let $I$ be a {\em middle slab}, i.e.\ an interval of level surfaces without critical points of $\calL$ in its interior whose upper and lower levels contain a maximum and minimum (resp.) of $\calL$.  

%
%
%

We choose $I$ so that the intersection of $L_1 \cup L_2$ with any level surface in $I$ is non-empty.   If there is a level surface $\hatP$ in $I$, transverse to $Q$, giving rise to no monogons in $G_Q$ at each of $V_1$ and $V_2$, then (A) is satisfied and 
we are done.  
(Possibly $\hatP$ is disjoint from $K$ or one, but not both, of $L_1, L_2$.)

So assume for each transverse level surface in this slab $I$ there is a high or low disk in $G_Q$ associated to $L_1 \cup L_2$. (A monogon of $G_Q$ is a high, resp.\ low, disk 
if a collar of its boundary lies above, resp.\ below, the level surface $\hatP$ in $M$.)  Apply Gabai's argument (in Lemma~4.4 of \cite{gabai:fatto3mIII}) to the high and low disks coming from these monogons of $G_Q$.  Note that near 
the maximum of $I$ such a disk must be high, and near the minimum it must be 
low. 
Gabai's argument in this context shows that there must be a level surface $\hatP$ that intersects some $L_j$ twice, for some $j \in \{1,2\}$, and gives rise to high and low disks in $G_Q$ guiding $L_j$ onto $\hatP$ disjointly from the other two components of $\calL$. Then $Q$ cannot also give rise to either a 
high or low disk at $\hatP$ for another component of $\calL$ since otherwise $\calL$ could be thinned without increasing the width of $K$.   Taking $\{i,j\}=\{1,2\}$, 
$\hatP$ satisfies (B) unless $\hatP$ is disjoint from $L_i$ --- which we now assume.

To the side of $\hatP$ containing $L_i$ we may find a new middle slab such that each level surface intersects $L_i$ but is disjoint from $L_j$.  Otherwise by isotoping $L_j$ onto $\hatP$ we could thin. Now apply the 
same argument.   Either we find a level surface satisfying (A) or $L_i$ can 
be isotoped disjointly from $K \cup L_j$ onto a level surface $\hatP'$ in this slab.  
Therefore, assuming (A) does not occur for a level surface in this new middle slab, we may isotop $L_1 \cup L_2$ disjointly from $K$ onto distinct level surfaces $\hatP$ and $\hatP'$ so that $L_1$ lies in one and $L_2$ in the other giving conclusion (C).
\end{proof}


\begin{lemma}\label{lem:largengivesnonhyp}
Let $M$ be an orientable, compact $3$-manifold and let $K \cup L_1 \cup L_2
\subset M$ be a link. Let $X = M - \nbhd(K \cup L_1 \cup L_2)$ and $T_K,T_1,T_2$ be the components of $\partial X$ corresponding to 
$K,L_1,L_2$.  Let $Q \subset X$ be a properly embedded, oriented surface
such that $T_i \cap \partial Q$ is a non-empty collection of coherently oriented
curves on $T_i$ for each $i \in \{1,2\}$. Let  $H_1 \cup_{\hatF} H_2$ be a 
genus $g$ Heegaard splitting of $M$. Assume that $L_1 \cup L_2$ cannot 
be isotoped
so that $L_1$ and $L_2$ lie on disjoint copies of $\hatF$. 
Let $\Delta_K$, $\Delta_1$, $\Delta_2$ be the distance between the slopes of $\partial Q$ and the 
meridian slopes of $K,L_1,L_2$ on the $T_K,T_1,T_2$. 
If  
\[\min(\Delta_1, \Delta_2) > \max(-36\chi(Q),6)(2\bn_{\hatF}(K)\max(\Delta_K,1)+2g-1)\] 
(where $\Delta_K=0$ includes the case that $\bdry Q$ is disjoint from $\bdry \nbhd(K)$)
then either 
\begin{itemize}
\item[(a).] there exists a \mobius band in
$X$ whose boundary is a meridian in $T_1$ or $T_2$; or
\item[(b).] there exists an annulus in $X$ with one boundary component essential 
on $T_1$ and the other essential on $T_2$. Furthermore, the slope of this annulus on $T_i$
is neither meridianal nor that of $\partial Q \cap T_i$, for each $i \in \{1,2\}$.
\end{itemize}
\end{lemma}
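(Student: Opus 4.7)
The plan is to apply Lemma~\ref{lem:levelPQ} to obtain a level surface $\hatP$ whose intersection graphs with $Q$ are rich enough that pigeonhole produces a pair of edges simultaneously parallel in both $G_P$ and $G_Q$; such a pair assembles the required annulus. The hypothesis that $L_1 \cup L_2$ cannot be isotoped onto disjoint copies of $\hatF$ immediately rules out conclusion (C) of Lemma~\ref{lem:levelPQ}, so I obtain $\hatP$ in case (A) or (B). Since $\calL$ is in $K$-thin position, $m_K = |K \cap \hatP| \leq 2\bn_{\hatF}(K)$, and vertices of $V_1 \cup V_2$ (in case (A)) or of $V_i$ for at least one $i$ (in case (B)) carry no monogons.

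The combinatorial heart is a double application of the standard bound that the number of parallel classes of edges in a graph on a compact surface $S$ is $O(|\chi(S)|)$. A vertex of $G_Q$ in $V_i$ has valence $m_i\Delta_i$, giving at least $\min(\Delta_1, \Delta_2)$ edges per such vertex; after subtracting those absorbed into $V_K$ (bounded per endpoint by $m_K\max(\Delta_K,1) \leq 2\bn_{\hatF}(K)\max(\Delta_K,1)$, the origin of the first factor in the hypothesis) and those lying in loops at $V_1 \cup V_2$ (excluded by the no-monogon condition together with the Euler-characteristic bound coming from $\hatQ$), one finds a large collection of edges of $G_Q$ running from $V_1$ to $V_2$. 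A first application of the arc-class bound in $\hatQ$ (bounded by roughly $-6\chi(\hatQ) \leq -6\chi(Q)+O(|V|)$) extracts a large parallel class in $G_Q$; within that class, a second application of the arc-class bound to $\hatP$ (controlled via $\chi(\hatP)=2-2g$, contributing the $2g-1$ term) yields a pair of edges $e_1, e_2$ also parallel in $G_P$. The factor $\max(-36\chi(Q),6)$ reflects these two nested layers of arc-counting, with the constant $6$ handling the case $\chi(Q)\geq 0$, i.e.\ when $Q$ is an annulus.

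With such $e_1, e_2$ in hand, I would glue the bigon face $D \subset Q$ of $G_Q$ (bounded by $e_1, e_2$ and arcs on $T_1, T_2$ of slope $\bdry Q$) to the bigon face $E \subset P$ of $G_P$ (bounded by $e_1, e_2$ and arcs on $T_1, T_2$ of meridional slope) along $e_1 \cup e_2$. The resulting surface $A$ has $\chi(A) = 1+1-2 = 0$ and two boundary components, one on each of $T_1$ and $T_2$, each representing the homology class $[\bdry Q \cap T_i] + [\mu_i]$ --- neither meridional nor equal to $[\bdry Q \cap T_i]$. Since no non-orientable surface with $\chi = 0$ has two boundary components, $A$ is an annulus, yielding conclusion (b). Conclusion (a) arises in the degenerate case in which $(e_1, e_2)$ forms a length-$2$ Scharlemann cycle of $G_Q$ at a single vertex of $V_i$; then $D$ together with a meridional subdisk of $\hatP$ along $T_i$ assembles into a \mobius band in $X$ whose boundary is a meridian of $L_i$.

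The hard part will be the bookkeeping in the double-pigeonhole step: one must juggle the Euler-characteristic bounds on parallel classes in $G_P$ and $G_Q$ simultaneously, track the precise contribution of edges into $V_K$ (which is why $\bn_{\hatF}(K)\max(\Delta_K,1)$ enters), and handle the two cases of Lemma~\ref{lem:levelPQ} uniformly. The specific coefficients $36$, $6$, $2$, and $1$ in the hypothesis are calibrated precisely so that this nested counting succeeds.
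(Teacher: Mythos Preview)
Your overall architecture---apply Lemma~\ref{lem:levelPQ} to get a level surface $\hatP$, then run a two-stage pigeonhole on parallel arc classes (once in $\hatQ$, once in $\hatP$) to find a pair of edges bounding bigons on both sides---is exactly the paper's approach. But two steps in your sketch do not go through.

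\textbf{The $V_1\!\to\! V_1$ case cannot be excluded.} You claim that edges with both endpoints in $V_1\cup V_2$ are ``excluded by the no-monogon condition together with the Euler-characteristic bound,'' leaving only $V_1\!\to\! V_2$ edges. The no-monogon condition only says that an edge incident to $V_1$ does not bound a monogon face; it does not prevent the large parallel family $\mathcal{E}$ from running $V_1\!\to\! V_1$. The paper does not exclude this case: it keeps a genuine dichotomy, (a) $\mathcal{E}$ joins $V_1$ to $V_1$, or (b) $\mathcal{E}$ joins $V_1$ to $V_2$. Case~(b) produces the annulus as you describe; case~(a) is where the M\"obius band comes from, and it is not a degenerate Scharlemann-cycle afterthought but a full case of equal weight.

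\textbf{The M\"obius band is not immediately in $X$.} This is the more serious gap. In case~(a) the edges of $\mathcal{E}$ have no endpoints on $L_2$-vertices, so to apply the parallel-edge count in $\hatP$ one works in $G_P(\mathcal{E})$ as a graph in $\hatP-\nbhd(K)$ with the $L_2$-vertices \emph{forgotten}. Consequently the ``bigon'' $D_P$ bounded by a pair of edges parallel in $G_P(\mathcal{E})$ may be punctured by $L_2$, and the M\"obius band $D_Q\cup D_P$ lives only in $M-\nbhd(K\cup L_1)$, not in $X$. The paper then needs a separate, nontrivial argument (its Claim~\ref{clm:S}): take the boundary $A$ of a neighborhood of this punctured M\"obius band and \emph{rerun the entire intersection-graph argument} with $A$ playing the role of $\hatP$, to produce either a M\"obius band genuinely in $X$ with meridional boundary on $T_2$, or the desired annulus. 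Your sketch omits this step entirely, and without it conclusion~(a) of the lemma is not established.
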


\begin{proof}
Recall that when $\Delta_K=0$, the components of $\partial Q \cap T_K$, if non-empty, are included
in the boundary of $\hatQ$ (the abstract surface in which $G_Q$ sits) and $V_K$ is empty. Also,
note that by convention $\bn_{\hatF}(K)>0$.

Applying Lemma~\ref{lem:levelPQ} to the given $\calL=K \cup L_1 \cup L_2$, $Q$, and Heegaard splitting, gives a level surface $\hatP$ of the splitting for which we assume that conclusion $(A)$ or $(B)$ holds. Then $m_i \geq m_j$ and, say, $i=1$ so that $G_Q$ has no monogons based at a vertex of $V_1$.

Let $G_Q(V_1)$ be the subgraph in a subsurface of $G_Q$ consisting of all edges of $G_Q$ that are incident to
$V_1$ and all the vertices of $G_Q$ to which these edges are incident. We think of $G_Q(V_1)$
as a graph in the surface gotten by attaching disks to $Q$ along those components of $\partial Q$ 
corresponding to vertices of $G_Q(V_1)$ (thus if a vertex of $G_Q$ is not connected by edges
to $V_1$, then it will give rise to a boundary component of $G_Q(V_1)$).
Let $\widetilde{G}_{Q}(V_1)$ be the reduced graph obtained from $G_{Q}(V_1)$ by amalgamating parallel edges.

\begin{claim}\label{claim:paralleledges1}
Assume $Q$ is an orientable surface with no disk components and such that
each component has non-empty boundary.
Furthermore assume that if $Q$ has annular components then $Q$ is a 
single annulus. Let $E$ be a collection of disjoint, properly embedded arcs 
in $Q$ such that no arc is parallel to the boundary and no two arcs are
parallel to each other. Then $|E| \leq \max(-3\chi(Q),1)$.

\end{claim}
\begin{proof}
If $Q$ is an annulus then $|E| \leq 1$, verifying the inequality. So assume
no component of $Q$ is an annulus.
Since no arc of $E$ is boundary parallel and no two are parallel, $E$ can be 
completed to an ideal triangulation of (the interior of) $Q$ by adding 
more edges between the components of $\bdry Q$ as needed.  If $E'$ is the
resulting collection of edges and $F$ is the collection ideal triangles, 
then we have both $3|F| = 2|E'|$ and $\chi(Q) = -|E'|+|F|$.  Thus $|E| \leq
|E'|=-3\chi(Q)$.  This gives the claim.
%
%
%
\end{proof} 


Since each vertex of $V_1$ has valence $m_1 \Delta_1$, 
Claim~\ref{claim:paralleledges1} shows that there must be at least 
$m_1 \Delta_1/\max(-6\chi(Q),1)$ mutually parallel edges of $G_{Q}(V_1)$.  
Let $\mathcal{E}$ be one of these sets of edges.  
%
%
%
Since the valence of a vertex of $V_K$ is $m_K \Delta_K \leq 2 \bn_{\hatF}(K) \Delta_K$ (the presentation is $K$--thin) which is in turn less than $m_1 \Delta_1/\max(-6\chi(Q),1)$ by hypothesis, 
 the edges in $\mathcal{E}$ cannot have an endpoint on a vertex of $V_K$ in $G_{Q}$.   
 Therefore 
 the edges in $\mathcal{E}$ either (a) join two vertices of $V_1$ (perhaps the same vertex) or (b) join a vertex of $V_1$ to a vertex of $V_2$ (note that this must
be the case if $Q$ is an annulus).

Now we show that there is a pair of edges of $\mathcal{E}$ bounding a disk on $\hatP-\nbhd(K)$.   
Let $G_P(\mathcal{E})$ be the subgraph of $G_P$ on $\hatP - \nbhd(K)$ consisting of the edges in $\mathcal{E}$ and the vertices from $\hatP \cap (L_1 \cup L_2)$ to which these edges are incident.  For case (a), these vertices are all the $m_1$ vertices of $\hatP \cap L_1$ ($\min(\Delta_1, \Delta_2) > \max(-6\chi(Q),1)$).  For case (b), notice that though the edges of $\mathcal{E}$ are parallel in
$G_{Q}(V_1)$, in $G_Q$ these edges may have monogons interspersed between them at the $V_2$ vertex. However, if there are such monogons then we are under conclusion $(B)$ of Lemma~\ref{lem:levelPQ}. Then $m_2=2$ and each of
the two vertices of $\hatP \cap L_2$ appears $|\mathcal{E}|/2$ times as a label at the $V_2$ end of 
$\mathcal{E}$. Whether we are working under conclusion $(A)$ or $(B)$ of Lemma~\ref{lem:levelPQ} then, the
hypotheses $\min(\Delta_1, \Delta_2) > \max(-6\chi(Q),1)$ and $m_1 \geq m_2$ tell us that in case (b), the vertices of $G_P(\mathcal{E})$ are all the $m_1$ vertices of $\hatP \cap L_1$ with all the $m_2$ vertices of $\hatP \cap L_2$. 
In both cases (a) and (b), the vertices of $G_P(\mathcal{E})$ have valence at least $\Delta_1/\max(-6\chi(Q),1)$.  (Each label of $G_{Q}$ at the vertices of $V_1$ or $V_2$ appears at least this many times at the endpoints of $\mathcal{E}$.  For (b) we use that $m_1 \geq m_2$.)

\begin{claim}\label{claim:paralleledges2}
Let $G$ be a graph in a surface $P$ with $\chi(P)=k$.  If $G$ has no monogons and each vertex has valence greater than $6 \max(1-k,1)$, then $G$ has parallel edges.
\end{claim}

\begin{proof}
Assume there are no parallel edges in $G$.
Then we may add edges to $G$ so that all faces are either $m$--gons with $m\geq 3$ or annuli with one boundary component being a component of $\bdry P$ and the other consisting of a single edge and vertex of $G$.  
We may then count $\chi(P)$ as $V-E+F = k$ where $V,E,F$ are the numbers of vertices, edges, and disk faces.  Because every edge is on the boundary of the faces (including the annuli) twice, $2E \geq 3F + |\bdry P|$.  Let $C=6 \max(1-k,1)$. The valence assumption implies $ CV < 2E$ and thus both that $V <2E/C$ and $C/2<E$. 

Therefore $k = V-E+F < 2E/C - E + 2E/3 - |\bdry P|/3$.  Hence 
$Ck < E (2-C/3)-|\partial P|C/3$. Then since $C \geq 6$, $k<0$. That is $C=6(1-k)$.
Thus $3(1-k) > (1-1/k) |\bdry P| + E \geq E$.  This contradicts that $C/2<E$.
\end{proof}

\begin{remark}
When $k>0$ or $|\partial P| \neq 0$, the above proof shows that if $G$ has no monogons and each
vertex has valence at least $6 \max(1-k,1)$, then $G$ has parallel edges. Change the strict
inequalities in the last four lines to $\leq, \geq$. We conclude that $k \leq 0$ and
$3(1-k) \geq (1-1/k) |\bdry P| + E > E$, the latter contradicting that $C/2 \leq E$. In the application
below, that $|\partial P|=0$ means that $K$ is disjoint from the level surface $\hatP$.
\end{remark}  

Observe that $G_P(\mathcal{E})$ has no monogons: in case (a) by the Parity Rule due to the coherency of orientations of $\bdry Q$ on the components of $\bdry \nbhd(L_1 \cup L_2) \subset \bdry X$, and in case (b) due to the endpoints of the edges being on vertices coming from different components of $L_1 \cup L_2$.  Note that in case (a) the 
vertices of $V_2$ are forgotten, so 
two edges that are parallel in $G_P(\mathcal{E})$ may not be parallel in $G_P$.
Also each vertex of $G_P(\mathcal{E})$ has valence at least $\min (\Delta_1, \Delta_2)/\max(-6\chi(Q),1) > 6(2\bn_{\hatF}(K)\max(\Delta_K,1)+2g-1) \geq 6\max(1-\chi(\hatP-\nbhd(K)),1)$ because $\hatP$ has genus $g$, $|\hatP \cap K| \leq 2\bn_{\hatF}(K)$.
Therefore Claim~\ref{claim:paralleledges2} implies that $G_P(\mathcal{E})$ has parallel edges.  
Hence there is a pair of edges $e,e' \in \mathcal{E}$ that bound a disk $D_{Q}$ in $G_{Q}$ and a disk $D_P$ in $G_P(\mathcal{E})$.   We may assume $D_{Q} \cap D_P = e \cup e'$.

In case (a), $D_Q \cup D_P$ is a \mobius band in $M - \nbhd(K \cup L_1)$ with boundary on $T_1$
that is a meridian.
This follows from the proof of Lemma~2.1 of \cite{gordon}. To see that the boundary is a meridian, one notes that its 
slope is the same as the slope of $\partial P$ since the rectangle $D_P$ connects anti-parallel
vertices in $P$ ($D_Q$ connects parallel vertices in $Q$).

In case (b), $D_Q \cup D_P$ is an annulus in $X$ with a boundary component on each of $T_1$ and $T_2$.   Each boundary component of this annulus must 
intersect a component of $\bdry P$ and of $\partial Q$ algebraically a non-zero number of times on $T_1 \cup T_2$. Thus a boundary component of this annulus is essential and isotopic to neither a component of $\bdry P$, a meridian, nor $\bdry Q$. This is conclusion $(b)$ of the Lemma.

To finish the proof we need to show that the \mobius band of case (a) can be taken to be disjoint from $L_2$. 


\begin{claim}\label{clm:S}
Either
\begin{itemize}
\item There is a \mobius band in $X$ whose boundary is a meridian on $T_1$ or $T_2$; or
\item There is an annulus in $X$ with a boundary component on 
each of $T_1$ and $T_2$ both of which are essential in $T_1,T_2$ and neither of which is 
isotopic to a 
meridian or to a component of $\partial Q$. 
\end{itemize}
\end{claim}

\begin{proof}
By the above, we may assume there is a \mobius band, $S$, in $M - \nbhd(K \cup L_1)$ with 
meridianal boundary in $T_1$. We assume there is no
such $S$ disjoint from $L_2$ and take $S$ to intersect $L_2$ minimally. Let $S'=S \cap X$. 
Isotop $\partial Q, \partial S'$ to intersect minimally in $\partial X$. Then no arc of $Q \cap S'$ is boundary parallel in $Q$ into $\partial Q \cap T_2$. Let $A$ be the
punctured annulus coming from the boundary of a regular neighborhood of $S'$ in $X$. Then no arc of $Q \cap A$ 
is boundary parallel in $Q$ into $\partial Q \cap T_2$ as there was no such for $Q \cap S'$. Consider the graphs of intersection $G_A,G_Q'$ coming from the arcs of $Q \cap A$ (as done for $G_P,G_Q$). Then $G_Q'$ has no monogons based at the vertices corresponding to $T_2$. The Parity Rule shows that $G_A$ has no monogons. We now apply the argument above to 
$G_A,G_Q'$ (in place of $G_P,G_Q$) to find a \mobius band, disk, or annulus in $X$. 

To fit that argument (despite the slight awkwardness of indices), set 
$V_1=|\partial Q \cap T_2|,V_2=|\partial Q \cap T_1|$ and $m_1=|\partial A \cap T_2| \geq 2$ and $m_2=|A \cap T_1|=2$. 
Then $m_1 \geq m_2$ and there are no monogons of $G_Q'$ at any vertex of $V_1$. This corresponds to
the situation in the above argument coming from conclusion $(B)$ of Lemma~\ref{lem:levelPQ} (with $A$
taking the role of $P$). Each vertex of $V_1$ in $G_Q'$ has 
valence $m_1 \Delta_2$. 
Let $G_Q'(V_1)$ be the subgraph of $G_Q'$ consisting of all edges of $G_Q'$ that are incident to
$V_1$ and all the vertices of $G_Q'$ to which these edges are incident. Again $G_Q'(V_1)$ is
a graph in the surface gotten by attaching disks to $Q$ along those components of $\partial Q$ 
corresponding to vertices of $G_Q'(V_1)$.
Let $\widetilde{G}_{Q}'(V_1)$ be the reduced graph obtained from $G_{Q}'(V_1)$ by amalgamating parallel edges. By Claim~\ref{claim:paralleledges1}, 
there must be at least $m_1 \Delta_2/\max(-6\chi(Q),1)$ mutually parallel edges of $G_{Q}'(V_1)$. Let $\mathcal{E}$ be one of these sets of edges.  Since $A$ is disjoint from $K$, 
 the edges in $\mathcal{E}$ either (a) join two vertices of $V_1$ (perhaps the same vertex) or (b) join a vertex of $V_1$ to a vertex of $V_2$. 
Let $G_A(\mathcal{E})$ be the subgraph of $G_A$ consisting of the edges in $\mathcal{E}$ and the vertices to which these edges are incident.  For case (a), these vertices are all the $m_1$ vertices 
corresponding to $A \cap T_2$ ($\Delta_2 > \max(-6\chi(Q),1)$). In this case we think
of $G_A(\mathcal{E})$ as a graph in the annulus $\hatA$ gotten by abstractly capping off the components of $A \cap T_2$ with disks (i.e. $V_2$ 
corresponds to the boundary of $\hatA$). For case (b), 
since $ \Delta_2 > \max(-6\chi(Q),1)$ and $m_1 \geq m_2$, the vertices of $G_A(\mathcal{E})$ are all the $m_1$ vertices corresponding to $A \cap T_2$ with both vertices of $A \cap T_1$. In case (b), we consider $G_A(\mathcal{E})$ as a 
graph in the $2$-sphere, $\hatA$, we get by abstractly capping off all of the boundary of 
$A$ with disks. In both cases (a) and (b), the vertices of $G_A(\mathcal{E})$ have valence at least $\Delta_2/\max(-6\chi(Q),1)$.  (Each label of $G_{Q}'$ at the vertices of $V_1$ or $V_2$ appears at least this many times at the endpoints of $\mathcal{E}$.  For (b) we use that $m_1 \geq m_2$.)

$G_A(\mathcal{E})$ has no monogons since $G_A$ has none. 
Also each vertex of $G_A(\mathcal{E})$ has valence at least $\min (\Delta_1, \Delta_2)/\max(-6\chi(Q),1) > 6(2\bn_{\hatF}(K)\max(\Delta_K,1)+2g-1) \geq 6$. 
Therefore Claim~\ref{claim:paralleledges2} (with $G=G_A(\mathcal{E})$ and $\hatA$ playing the role of $P$) implies that $G_A(\mathcal{E})$ has parallel edges.  
Hence there is a pair of edges $e,e' \in \mathcal{E}$ that bounds a disk $D_{Q}$ in $G_{Q}'$ and 
a disk $D_A$ in $G_A(\mathcal{E})$.   We may assume $D_{Q} \cap D_A = e \cup e'$.    

Then, as above, we have two possibilities. In case (a), $D_Q \cup D_A$ is a \mobius band in $X$ with boundary a meridian on $T_2$ (Lemma~2.1 of \cite{gordon}). 
In case (b), $D_Q \cup D_A$ is an annulus in $X$ with a boundary component on 
each of $T_1$ and $T_2$ both of which are essential in $T_1,T_2$ and neither of which is isotopic to a component of $\partial Q$ or $\partial A$. As $\partial A$ is meridianal on each of $T_1$ and $T_2$,
this completes the proof of the claim.

\end{proof}

Claim~\ref{clm:S} finishes the proof of Lemma~\ref{lem:largengivesnonhyp}.

\end{proof}

\subsection{Proof of main theorems}\label{sec:proofofmain}
In this section we prove Theorem~\ref{thm:generalconstraints}, Corollary~\ref{cor:iff}, and 
Corollary~\ref{cor:bridgenumbers}.

\begin{thmgeneralconstraints}
Let $M$ be a compact, orientable $3$-manifold with (possibly empty) boundary and $K \cup L_1 \cup L_2$ a
link in $M$. Let $\hatR$ be an annulus in $M$ with 
$\partial \hatR= L_1 \cup L_2$. 
Assume $(\hatR,K)$ in $M$ is caught by the surface $Q$ in $X=M - \nbhd(K \cup L_1 \cup L_2)$.
Let $T_K,T_1,T_2$ be the components of $\partial X$ corresponding to $K,L_1,L_2$ respectively.
Fixing an orientation on $M$, let $\mu_i$ be a meridian of $L_i$ on $T_i$ and  $\lambda_i$ be a framing curve coming from $\hatR$. Express the first homology class of a component of $\bdry Q$ on $T_i$ as $p_i [\mu_i] + q_i[\lambda_i]$. Let
$\Delta_K$ be the distance on $T_K$ between a component of $\partial Q$ and a meridian of $K$ (setting $\Delta_K=0$ when
$Q$ is disjoint from $K$).
Let $K^n$ be $K$ twisted $n$ times along $\hatR$ (Definition~\ref{def:twistalongR}).  

Let $H_1 \cup_{\hatF} H_2$ be a genus $g$ Heegaard splitting of $M$.

Then either 
\begin{itemize}
\item[(1).]  $\hatR$ can be isotoped to lie in $\hatF$; or
\item[(2).] There is an essential annulus $A$ properly embedded in $X$ with a boundary component in each of $T_1$ and $T_2$. Furthermore, the slope of $\partial A$ on each $T_i$ is neither that of 
the meridian of $\nbhd(L_i)$ nor that of $\partial Q$; or
\item[(3).]  For each $n$, 
$$\bn_{\hatF}(K^n) \geq 
\frac{\min(|q_1+n p_1|,|q_2-n p_2|)/\max(-36\chi(Q),6) -2g + 1}{2 \max(\Delta_K,1)}$$

\end{itemize}
\end{thmgeneralconstraints}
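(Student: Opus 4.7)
The plan is to transport the catching data to the new exterior via the Dehn twist homeomorphism $h_n$ from Definition~\ref{def:twistalongR} and then invoke Lemma~\ref{lem:largengivesnonhyp}.

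First, I will set $Q^n = h_n(Q)$, a properly embedded surface in $X^n := M-\nbhd(K^n \cup L_1 \cup L_2)=h_n(X)$. Since $h_n$ is a homeomorphism that carries $K$ to $K^n$ and is the identity on $L_1 \cup L_2$, the surface $Q^n$ will still catch $(\hatR, K^n)$ and satisfy $\chi(Q^n)=\chi(Q)$. The restriction of $h_n$ to each torus $T_i$ is a Dehn twist along $\lambda_i$: of power $+n$ on $T_1$ and $-n$ on $T_2$, the sign flip reflecting the opposite orientations induced by $\hatR$ on the two components of $\bdry \hatR$. Consequently $\bdry Q^n$ will represent $p_1[\mu_1]+(q_1+np_1)[\lambda_1]$ on $T_1$ and $p_2[\mu_2]+(q_2-np_2)[\lambda_2]$ on $T_2$, so its meridianal distances there are $|q_1+np_1|$ and $|q_2-np_2|$. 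The distance $\Delta_K$ on the torus over $K^n$ is preserved because $h_n$ pairs the meridian of $K$ with that of $K^n$.

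Next, I will apply Lemma~\ref{lem:largengivesnonhyp} to the triple $(K^n \cup L_1 \cup L_2,\, Q^n,\, \hatF)$. The lemma's standing hypothesis is that $L_1 \cup L_2$ cannot be isotoped onto disjoint copies of $\hatF$. If this hypothesis fails, I will use the annulus $\hatR$ together with the two parallel copies of $\hatF$ containing $L_1$ and $L_2$ to isotope $\hatR$ itself into $\hatF$, giving conclusion~(1). If the hypothesis holds, then either the numeric inequality of the lemma fails, i.e.,
\[ \min(|q_1+np_1|,|q_2-np_2|) \le \max(-36\chi(Q),6)\bigl(2\bn_{\hatF}(K^n)\max(\Delta_K,1)+2g-1\bigr), \]
in which case simple rearrangement yields conclusion~(3); or it holds, and the lemma returns either (a) a M\"obius band in $X^n$ with meridianal boundary on $T_1$ or $T_2$, or (b) an essential annulus $A^n \subset X^n$ with boundary on $T_1 \cup T_2$ whose slopes are neither meridianal nor those of $\bdry Q^n$.

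Finally, I will pull case (b) back by $h_n^{-1}$: the annulus $A = h_n^{-1}(A^n) \subset X$ remains essential, and the equivariance of the Dehn twist action on the slope lattices, together with $h_n(\bdry Q)=\bdry Q^n$, ensures that the slopes of $\bdry A$ on each $T_i$ are distinct from those of $\bdry Q$. Verifying that they are also non-meridianal (or transferring the exceptional values of $n$ into case (a)) delivers conclusion~(2), while case (a) will be reduced to a conclusion~(2) annulus by a doubling or gluing argument that uses $\hatR$. The principal obstacles I expect are (i) deriving conclusion~(1) from $L_1 \cup L_2$ lying on disjoint pushoffs of $\hatF$, which requires a uniqueness-of-annuli argument involving $\hatR$, and (ii) converting the M\"obius band alternative of case (a) into the appropriate annulus required by conclusion~(2).
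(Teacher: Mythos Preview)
Your overall strategy---transport $Q$ by the Dehn twist $h_n$ to a catching surface $Q^n$ for $(\hatR,K^n)$ and then invoke Lemma~\ref{lem:largengivesnonhyp}---is exactly the paper's, and your slope computations are correct.  The gap is in your resolution of obstacle~(ii).  You propose to turn the M\"obius band of case~(a) into a conclusion-(2) annulus by ``doubling or gluing with $\hatR$''.  Neither works: doubling a M\"obius band with meridianal boundary on $T_i$ yields an annulus with both boundaries on the \emph{same} $T_i$ and still meridianal, not an annulus from $T_1$ to $T_2$ with non-meridianal slope; and since $\bdry B$ has slope $\mu_i$ while $\bdry R$ has slope $\lambda_i$, there is no boundary-gluing of $B$ with $R$.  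The paper instead \emph{excludes} case~(a) outright.  The missing first step is to observe that if $R$ is compressible in $M-\nbhd(L_1\cup L_2)$ then $\hatR$ isotopes into $\hatF$ (giving~(1)), so one may assume $R$ incompressible.  Then Lemma~\ref{lem:boundaryslopesofannuli}, applied with $A=R$ and $B$ the putative M\"obius band, forces $\bdry B$ to have the slope of $\bdry R$, contradicting that $\bdry B$ is meridianal.  The same incompressibility of $R$ is what upgrades the annulus of case~(b) to an \emph{essential} annulus (Lemma~\ref{lem:largengivesnonhyp} does not assert essentiality; a compressing disk for the annulus would give one for $R$).  You do not mention either use of the incompressibility of $R$, and without it neither the exclusion of~(a) nor the essentiality in~(2) goes through.

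Your obstacle~(i) is also much lighter than you anticipate: no uniqueness-of-annuli argument is needed.  With $L_1$ and $L_2$ on disjoint parallel copies of $\hatF$, take an intermediate level copy of $\hatF$ transverse to $\hatR$; some component of $\hatR\cap\hatF$ is then a core curve of $\hatR$, and $\hatR$ isotopes onto its neighborhood in $\hatF$ (this is the paper's Lemma~\ref{lem:annulusbetween}).
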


\leftline{\em Proof of Theorem~\ref{thm:generalconstraints}}

Let $H_1 \cup_{\hatF} H_2$ be the given genus $g$ Heegaard splitting of $M$.
Let $K, K^n, L_1, L_2, \hatR$, $X$, and $Q$ be as stated. Let $R$ be the
annulus $\hatR \cap (M - \nbhd(L_1 \cup L_2))$. If $R$ is compressible in
$M - \nbhd(L_1 \cup L_2)$, then $\hatR$ can be isotoped onto $\hatF$.
We hereafter assume that $R$ is incompressible. 




Dehn twists along the annulus $R$ provide homeomorphisms of $M-\nbhd(L_1 \cup L_2)$ in which the meridians of $L_1$ and $L_2$ are spun in opposite handedness around $\bdry \hatR$.  In particular, let $h_n\colon M - \nbhd(L_1 \cup L_2) \to M - \nbhd(L_1 \cup L_2)$ be the homeomorphism of Definition~\ref{def:twistalongR} obtained by twisting $n$ times
along $R$.
Define $\calL^n$ to be the link $K^n \cup L_1 \cup L_2$ and let $X_n$ be its
exterior in $M$. Then $h_n$  induces a homeomorphism $h_n' \colon X \to X_n$. 
Define $Q_n=h_n'(Q)$.

Use the meridian, longitude coordinates to express the first homology class of a component of $\bdry Q$ on $\bdry \nbhd(L_i)$ as $p_i [\mu_i] + q_i[\lambda_i]$. 
As $Q$ catches $(\hatR,K)$, $p_i \neq 0$.  
With these same coordinates, the first homology class of a component of $\bdry Q_n$ on $\bdry \nbhd(L_i)$ is $p_i[\mu_i] + (q_i + (-1)^{i+1} n p_i)[\lambda_i]$. In particular, the distance,  $\Delta_i^n$, between 
a component of $\bdry Q_n$ and the meridian $\mu_i$ on $\bdry \nbhd(L_i)$ is 
$|q_i + (-1)^{i+1} n p_i|$. Furthermore, the components
of $\partial Q_n$ are coherently oriented on $\partial \nbhd(L_i)$ since
those of $Q$ are.
In other words, $Q_n$ catches the pair $(\hatR,K^n)$ in $M$.

\begin{lemma}\label{lem:annulusbetween}
If $L_1 \cup L_2$ can  
be isotoped so that $L_1$ and $L_2$ lie on disjoint copies
of $\hatF$ of $M$. Then $\hatR$ can be isotoped to lie
in $\hatF$.
\end{lemma}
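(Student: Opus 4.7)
My plan is to place $L_1 \cup L_2$ on two disjoint parallel copies $\hatF_1, \hatF_2$ of $\hatF$, isotope $\hatR$ into the product region between them, and then recognize $\hatR$ as a vertical annulus that can be flattened onto a copy of $\hatF$. Write $M = H_1' \cup W \cup H_2'$ where $W \cong \hatF \times [0,1]$ is the product region between $\hatF_1 = \hatF \times \{0\}$ and $\hatF_2 = \hatF \times \{1\}$, and $H_i'$ are the compression bodies with $\partial_+ H_i' = \hatF_i$. By hypothesis $L_i \subset \hatF_i$ and $\partial \hatR = L_1 \cup L_2$; we use the standing assumption from the proof of Theorem~\ref{thm:generalconstraints} that $R = \hatR \cap (M - \nbhd(L_1 \cup L_2))$ is incompressible.

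The main step is to isotope $\hatR$ rel $\partial \hatR$ to minimize $|\hatR \cap (\hatF_1 \cup \hatF_2)|$ and to argue that no interior intersection circles remain at the minimum, so that $\hatR \subset \bar{W}$. Interior intersection circles on $\hatR$ are either trivial in $\hatR$ (bounding a disk in $\hatR$) or core-parallel in $\hatR$. For an innermost trivial circle $\alpha$ bounding $D_R \subset \hatR$: if $\alpha$ is also trivial on the containing $\hatF_i$, then the sphere formed from $D_R$ and a disk in $\hatF_i$ bounds a ball, by irreducibility of either the product $W$ or the compression body $H_i'$, and we push $D_R$ across to reduce intersections. The delicate case is when $\alpha$ is essential on $\hatF_i$, so that $D_R$ is a compression disk for $\hatF_i$; here a surgery-and-comparison on $\hatR$ produces a new annulus with fewer intersections. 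For a core-parallel interior circle, an outermost sub-annulus of $\hatR$ cobounded with some $L_i$ together with the incompressibility of $R$ forces a further reduction. Since $\hatR$ is connected and its boundary components lie on different sides of $\partial W$, after minimization $\hatR \subset \bar W$.

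In the product $\bar W \cong \hatF \times [0,1]$, the annulus $\hatR$ is essential with one boundary component on each side. By the standard classification of essential annuli in a surface-cross-interval (Waldhausen), $\hatR$ is isotopic in $\bar W$ to a vertical annulus $c \times [0,1]$ for some essential simple closed curve $c \subset \hatF$. An ambient isotopy of $\bar W$ supported in a product collar neighborhood of $c \times [0,1]$, swapping the two product coordinates in that collar, flattens this vertical annulus into the horizontal sub-annulus $c \times [-\epsilon,\epsilon] \times \{\tfrac{1}{2}\}$ of the mid-level surface $\hatF \times \{\tfrac{1}{2}\}$, which is itself a copy of $\hatF$. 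This completes the isotopy of $\hatR$ into $\hatF$. The main obstacle is the delicate case in the second paragraph: an innermost trivial circle on $\hatR$ that is essential on $\hatF_i$ is not ruled out directly by the incompressibility of $R$, and the surgery-comparison argument must be set up carefully so as to produce an annulus with strictly fewer intersections with $\hatF_1 \cup \hatF_2$ while preserving $\partial \hatR = L_1 \cup L_2$.
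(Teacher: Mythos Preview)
Your approach is far more elaborate than what is needed, and the ``delicate case'' you flag is a genuine gap that you do not resolve. When an innermost trivial circle $\alpha \subset \hatR \cap \hatF_i$ is essential on $\hatF_i$, the disk $D_R \subset \hatR$ is a compressing disk for the Heegaard surface, and there is no evident surgery on $\hatR$ that preserves $\bdry \hatR = L_1 \cup L_2$ while reducing $|\hatR \cap (\hatF_1 \cup \hatF_2)|$; your sentence ``a surgery-and-comparison on $\hatR$ produces a new annulus with fewer intersections'' is an assertion, not an argument. Similarly, for a core-parallel interior circle you invoke incompressibility of $R$, but an outermost sub-annulus of $\hatR$ together with a disk in $\hatF_i$ (if the circle bounds there) would give a disk bounded by $L_i$ in $M$, not a compressing disk for $R$ in $M-\nbhd(L_1\cup L_2)$; and if the circle is essential on $\hatF_i$ you are back in the delicate case. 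So the minimization step, and hence the containment $\hatR\subset \bar W$, is not established.

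The paper's proof sidesteps all of this by abandoning the rel-boundary constraint. Take a single copy $\hatF$ lying \emph{between} $\hatF_1$ and $\hatF_2$ and make $\hatR$ transverse to it. Since $\hatF$ separates $L_1$ from $L_2$ and $\hatR$ is connected with one boundary component on each side, $\hatR \cap \hatF$ must contain at least one curve $c$ that is a core of $\hatR$ (trivial curves on $\hatR$ do not separate its boundary components). Now simply retract $\hatR$ onto a small annular neighborhood of its core $c$, and then into an annular neighborhood of $c$ in $\hatF$. No cleaning of intersections, no Waldhausen, no product region is needed; the whole argument is three lines. The key point you missed is that the conclusion only asks for $\hatR$ to be isotoped into $\hatF$, not into $\hatF$ rel $L_1 \cup L_2$, so there is no reason to trap $\hatR$ in $\bar W$ first.
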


\begin{proof}
Isotop $L_1,L_2$ to lie in $\hatF_1,\hatF_2$, disjoint copies of $\hatF$. We may take $\hatF$ to lie between them. Isotop $\hatR$ so that it intersects $\hatF$
transversely. Then some curve, $c$, of $\hatR \cap \hatF$ will be a core curve of $\hatR$. $\hatR$
can be isotoped to a neighborhood of $c$ and then into $\hatF$.
\end{proof}

Thus we assume $L_1 \cup L_2$ cannot be isotoped so that $L_1,L_2$ lie on disjoint copies of $\hatF$.
We apply Lemma~\ref{lem:largengivesnonhyp} to $K^n,L_1,L_2,Q^n$. Note that conclusion $(a)$
cannot hold because of the annulus $R$ between in $M - \nbhd(L_1 \cup L_2)$ (e.g. Lemma~\ref{lem:boundaryslopesofannuli} below). If conclusion $(b)$
holds, then the annulus in $X$ must be essential in $X$ by the incompressibility of $R$ in 
$M - \nbhd(L_1 \cup L_2)$ (a compressing disk for the annulus in $X$ would give rise to one
for $R$). Thus 
conclusion $(b)$ gives conclusion $(2)$, and we may assume $(b)$ does not hold. Thus we must
conclude that 


\begin{align*}
\min(|q_1+n p_1|,|q_2-n p_2|) &= \min(\Delta_1^n, \Delta_2^n)  \\
               & \leq
\max(-36\chi(Q^n),6)(2\bn_{\hatF}(K^n)\max(\Delta_{K^n},1)+2g-1)
\end{align*}

As $\Delta_{K^n}=\Delta_K$ and $\chi(Q^n)=\chi(Q)$ we may rewrite this as

$$\bn_{\hatF}(K^n) \geq 
\frac{\min(|q_1+n p_1|,|q_2-n p_2|)/\max(-36\chi(Q),6) -2g + 1}{2 \max(\Delta_K,1)}$$

as desired.

\eop{(Theorem~\ref{thm:generalconstraints})}

We need the following for the proof of Corollary~\ref{cor:iff}.

\begin{lemma}\label{lem:boundaryslopesofannuli}
Let $N$ be an orientable $3$--manifold with toral boundary components $T_1,T_2$
( $\partial N$ may contain other components). 
Let $A$ be an incompressible annulus in $N$ with a boundary component on each of $T_1$ and $T_2$. Let $B$ be a $\bdry$--incompressible annulus or a \mobius band in $N$, in either case with essential boundary on $T_1 \cup T_2$. Then either 
\begin{itemize} 
\item each component of $\bdry B$ must be isotopic on $T_1 \cup T_2$ to
one of $\partial A$; or 
\item $\partial B$ has a component on each of $T_1$ and $T_2$ and $N$ is either 
$T^2 \times [0,1]$ or has $T^2 \times [0,1]$ as a
connected summand, where $T^2 \times \{0,1\}$ is $T_1 \cup T_2$.  
\end{itemize}
\end{lemma}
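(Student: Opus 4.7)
The plan is to put $A$ and $B$ into minimal position and then analyze the intersection $A \cap B$ via standard innermost-circle and outermost-arc arguments. First I would isotope $A$ and $B$ so that $\partial A \cap \partial B$ is minimal on $T_1 \cup T_2$, and then minimize $|A \cap B|$ in the interior subject to this. Observe that $A$ is in fact essential in $N$: it is incompressible by hypothesis, and it cannot be $\partial$-parallel since its two boundary components lie on distinct tori of $\partial N$. In the annulus case, the hypotheses that $B$ is $\partial$-incompressible with essential boundary likewise make $B$ essential.

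If $A$ and $B$ can be isotoped to be disjoint, then $\partial A$ and $\partial B$ are disjoint essential simple closed curves on $T_1 \cup T_2$, and disjoint essential curves on a torus are parallel, so each component of $\partial B$ is isotopic to $\partial A$ on its torus, giving the first conclusion. Assume then that $A \cap B$ cannot be emptied. Innermost-disk arguments using incompressibility of $A$ and $B$ let us take every circle of $A \cap B$ to be essential in both surfaces. For arcs, an outermost arc $\alpha$ on $B$ cuts off a disk $D_B \subset B$ ($\alpha$ being $\partial$-parallel in $B$ by definition of outermost), and $\partial$-incompressibility of $A$ forces $\alpha$ to be $\partial$-parallel in $A$ as well, cutting off a disk $D_A \subset A$. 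Both endpoints of $\alpha$ therefore lie on a common torus $T_i$, and $D = D_A \cup_\alpha D_B$ is a disk in $N$ with $\partial D = \beta_A \cup \beta_B \subset T_i$, essential on $T_i$ by minimality of $|\partial A \cap \partial B|$. Hence $D$ is a compressing disk for $T_i$.

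I would then split on the topology of $\partial B$. If $\partial B$ lies entirely on one torus $T_i$ (the \mobius case, or an annulus with both boundaries on $T_i$), I would use the compressing disk $D$ together with a push-off of $D_A$ off of $A$ and a suitable annular strip on $T_i$ to manufacture a compressing disk for $A$, contradicting its essentiality; hence no arcs of $A \cap B$ exist and the slopes of $\partial B$ agree with $\partial A$ on $T_i$, yielding the first conclusion. If instead $\partial B$ has components on both $T_1$ and $T_2$ (so $B$ is an annulus) and the slope pairs do not match, then after handling any $\partial$-parallel arcs by the outermost-arc analysis above and directly analyzing the remaining spanning arcs of $A \cap B$, the combined configuration of $A$, $B$, and the compressing disks forces the region of $N$ containing $A \cup B$ between $T_1$ and $T_2$ to be a product $T^2 \times [0,1]$ with $T^2 \times \{0,1\} = T_1 \cup T_2$ --- either making $N$ itself equal to $T^2 \times [0,1]$ or identifying such a region as a connected summand of $N$.

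The main obstacle will be the structural extraction of the $T^2 \times [0,1]$ summand in this final case. I expect to carry it out by a careful cut-and-paste argument, gluing $A$, $B$, and the disks from outermost arcs to build the product region explicitly and then invoke irreducibility/reducibility of $N$ to promote it to a summand. Alternatively, one may appeal to the JSJ decomposition of $N$: the piece adjacent to both $T_1$ and $T_2$ contains both essential annuli $A$ and $B$ with distinct slope pairs, hence carries two incompatible Seifert fibrations, and the classification of Seifert fibered spaces with multiple fiberings then forces that piece to be $T^2 \times [0,1]$.
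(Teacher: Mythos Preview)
Your overall strategy is sound, but there is a genuine gap in your outermost-arc step. You take an outermost arc $\alpha$ on $B$ and assert that it is $\bdry$-parallel in $B$ ``by definition of outermost.'' This fails: the arcs of $A\cap B$ may all be essential in $B$ (spanning arcs if $B$ is an annulus, or the essential arc class if $B$ is a \mobius band), in which case no $\bdry$-parallel arc in $B$ exists and your construction of $D_B$ never gets started. The correct move---and what the paper does---is to take the outermost arc in $A$ instead. When $\bdry B\subset T_i$, every arc of $A\cap B$ has both endpoints on the single curve $\bdry A\cap T_i$, so every arc is $\bdry$-parallel in $A$ and an outermost disk $D_A\subset A$ is guaranteed. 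That disk $D_A$ is then a $\bdry$-compressing disk for $B$ (the arc being essential in $B$), contradicting the hypothesis on $B$ directly; in the \mobius case the $\bdry$-compression produces an essential disk at $T_i$, which is ruled out by the incompressibility of $A$.

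Your treatment of the case where $\bdry B$ meets both $T_1$ and $T_2$ is too vague to count as a proof. The paper handles it explicitly and elementarily: by the Parity Rule (orientability of $N$), every arc of $A\cap B$ must join distinct components of $\bdry A$ and distinct components of $\bdry B$, so all arcs are spanning in both surfaces. Two adjacent spanning arcs cut off rectangles $D_1\subset A$ and $D_2\subset B$; the annulus $C=D_1\cup D_2$ runs from $T_1$ to $T_2$ and, after a small isotopy, meets $B$ in a single spanning arc. One then checks that $\nbhd(C\cup B\cup T_1\cup T_2)$ has a $2$-sphere boundary component, which exhibits the $T^2\times[0,1]$ summand. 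This is a two-line cut-and-paste, not a hard structural extraction. Your JSJ alternative would also work in spirit, but note that $N$ is not assumed irreducible, so invoking a JSJ decomposition requires passing to an irreducible summand first; the direct construction avoids this.
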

\begin{proof} Note since $A$ is incompressible, there is no essential disk in 
$N$ with boundary on $T_1 \cup T_2$. 

First, assume $\bdry B$ lies on $T_i$ and 
no component is isotopic to
$\partial A \cap T_i$. Isotop $\bdry B$ to intersect $\bdry A$ minimally on $T_i$. After possibly
surgering $B$ along trivial simple
closed curves of intersection with $A$, a disk in $A$ bounded
by an outermost arc of $A \cap B$ gives a $\bdry$--compressing
disk for $B$. Then $B$ must be a \mobius band and $\bdry$-compressing $B$ 
gives an essential disk in $N$ with boundary on $T_1 \cup T_2$, a contradiction.

So we assume that $\partial B$ has one component on $T_1$ and another
on $T_2$. Note that $B$ must be incompressible in $N$ (else there is an essential disk
at $T_1$ or $T_2$ in $N$).
Isotope $\partial B, \partial A$ on $T_1 \cup T_2$ to intersect
minimally.  Surger $A,B$ so that no closed curves of intersection are
trivial in either $A$ or $B$. By orientability (the Parity Rule), each arc of $A \cap B$
must connect different components of $\partial A$ and different components
of $\partial B$. Thus $A \cap B$ is a collection of parallel spanning
arcs in $A$ and in $B$. Take a pair that cobound a disk $D_1$ of $A - B$.
These arcs in $B$ then cobound a disk $D_2$ in $B$. Then $D_1 \cup D_2$
gives an annulus $C$ between $T_1 \cup T_2$ such that $\partial C$ can be isotoped to intersect $\partial B$ once on each of $T_1$ and $T_2$.
Indeed, we may isotop $C$ so that it intersects $B$ in a single arc.
Then $\nbhd(C \cup B \cup T_1 \cup T_2)$ has a $2$--sphere boundary
component that displays $N$ as a connected sum with $T_1 \times [0,1]$ as claimed.
\end{proof}

In terms of genus $g$ bridge numbers, Theorem~\ref{thm:generalconstraints} has a partial converse. 

\begin{coriff}
Let $M$ be a compact, orientable $3$-manifold with (possibly empty) boundary and $K \cup L_1 \cup L_2$ be a
link in $M$. Let $\hatR$ be an annulus in $M$ with $\partial \hatR= L_1 \cup L_2$, and let $R$ be the annulus $\hatR \cap (M - \nbhd(L_1 \cup L_2))$ properly
embedded in $M - \nbhd(L_1 \cup L_2)$.
Assume $(\hatR,K)$ in $M$ is caught by the surface $Q$ in $X=M - \nbhd(K \cup L_1 \cup L_2)$.
Let $T_K,T_1,T_2$ be the components of $\partial X$ corresponding to $K,L_1,L_2$ respectively. 
Fixing an orientation on $M$, let $K^n$ be $K$ twisted $n$ times along $\hatR$.

Assume $M$ has a genus $g$ Heegaard splitting.

Then either 
\begin{itemize}
\item[(1).] There is a genus $g$ Heegaard surface of $M$ containing $\hatR$; or
\item[(2).] There is an essential annulus $A$ in $X$ with one component of 
$\partial A$ on
$T_1$ and the other on $T_2$ such that $\partial A$ and $\partial R$ have the 
same slope on $T_1,T_2$; or
\item[(3).] $\bn_{g}(K^n) \to \infty$ as $n \to \infty$.
\end{itemize}
\medskip
Furthermore, if either $(1)$ or $(2)$ hold, then $\{\bn_{g}(K^n)\}$ is a finite
set.
\end{coriff}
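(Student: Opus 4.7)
\leftline{\emph{Proof proposal.}}

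My plan is to derive the trichotomy of the corollary directly from Theorem~\ref{thm:generalconstraints}, using Lemma~\ref{lem:boundaryslopesofannuli} as the bridge that converts the theorem's alternative~(2) into the corollary's alternative~(2). The ``furthermore'' clause is handled separately by exhibiting, in each of cases~(1) and~(2), a self-homeomorphism of $M$ that identifies the family $\{K^n\}$ with $K$ up to isotopy and bounded bridge-number behavior.

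Suppose toward contradiction that conclusions~(1) and~(2) of the corollary fail and $\bn_g(K^n)\not\to\infty$. Pass to a subsequence on which $\bn_g(K^n)$ is bounded and for each such $n$ fix a realizing genus~$g$ Heegaard surface $\hatF_n$ of $M$. If $R$ were compressible in $N:=M-\nbhd(L_1\cup L_2)$, the argument opening the proof of Theorem~\ref{thm:generalconstraints} would isotope $\hatR$ onto $\hatF_n$, contradicting the failure of corollary~(1); so $R$ is incompressible in $N$. Apply Theorem~\ref{thm:generalconstraints} to $(\hatR,K)$ with catching surface $Q$ and splitting $\hatF_n$. Alternative~(1) again contradicts the failure of corollary~(1). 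Because $Q$ catches $(\hatR,K)$ we have $p_1,p_2\neq 0$, so $\min(|q_1+np_1|,|q_2-np_2|)$ grows linearly in $|n|$ and alternative~(3) would force $\bn_{\hatF_n}(K^n)\to\infty$, contradicting boundedness. Thus alternative~(2) must hold for infinitely many $n$, yielding an essential annulus $A\subset X$ whose boundary slope on $T_1\cup T_2$ is neither meridianal nor equal to $\partial Q$. Apply Lemma~\ref{lem:boundaryslopesofannuli} in $N$ with the incompressible annulus $R$ and $B:=A$: either each component of $\partial A$ is isotopic on $T_1\cup T_2$ to $\partial R$, giving corollary~(2) and contradicting our assumption; or $N$ has $T^2\times I$ as a summand bounded by $T_1\cup T_2$, in which case $\hatR$ is parallel to a product annulus in the summand and can be isotoped onto a genus $g$ Heegaard surface, giving corollary~(1) and again a contradiction. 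This completes the main implication.

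For the furthermore clause, in case~(1) the surface Dehn twist of $\hatF$ along the core of $\hatR\subset\hatF$ extends to a self-homeomorphism $\tau$ of $M$ preserving $\hatF$ setwise and realizing a Dehn twist along $\hatR$, so $\bn_\hatF(K^n)=\bn_\hatF(\tau^n(K))=\bn_\hatF(K)$ and $\bn_g(K^n)$ is bounded. In case~(2), cap $A$ by spanning half-disks in $\nbhd(L_1)\cup\nbhd(L_2)$ (permissible since its boundary slopes match $\partial R$) to obtain an annulus $\hatA\subset M$ with $\partial\hatA=L_1\cup L_2=\partial\hatR$; a standard parallelism argument for incompressible annuli with matching boundary in a $3$-manifold shows $\hatA$ and $\hatR$ are isotopic rel boundary in $M$. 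Hence the Dehn twist of $M$ along $\hatR$ is isotopic to that along $\hatA$, which is supported in a neighborhood of $\hatA$ disjoint from $K$ and thus fixes $K$ up to isotopy. Consequently $K^n$ is isotopic to $K$ for every $n$, and $\bn_g(K^n)$ is constant.

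The main technical hurdle is verifying that the essential annulus $A$ produced by alternative~(2) of Theorem~\ref{thm:generalconstraints} remains $\partial$-incompressible in the enlarged manifold $N$, as Lemma~\ref{lem:boundaryslopesofannuli} requires. A $\partial$-compressing disk in $N$ that fails to lie in $X$ must meet $\nbhd(K)$; an innermost-circle analysis of its intersection with $T_K$, combined with the essentiality of $A$ in $X$, should rule this out. A secondary subtlety is the degenerate $T^2\times I$-summand case of the lemma, where one must locate $\hatR$ inside the summand precisely enough to slide it onto a Heegaard surface and confirm we really do land in corollary~(1).
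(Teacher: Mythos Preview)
Your trichotomy argument takes a different route from the paper's and carries the gap you flag; more seriously, both halves of your ``furthermore'' argument are incorrect.

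\smallskip
\textbf{Furthermore, case (1).} A Dehn twist of the surface $\hatF$ along a curve $c$ extends over a compression body $H_i$ only when $c$ bounds a disk in $H_i$; there is no reason the core of $\hatR$ bounds disks on both sides, so your $\tau$ generally does not exist. (Nor is the $3$--dimensional twist $f_n$ along $\hatR$ a self-homeomorphism of $M$: it is not the identity on the portion of $\bdry(\hatR\times[0,1])$ coming from $\bdry\hatR\times[0,1]$, so it does not extend by the identity.) The paper's argument is different: with $\hatR\subset\hatF$, take the product neighborhood $\hatR\times[0,1]$ inside a product collar of $\hatF$ so that the $[0,1]$--factor is the height direction. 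Since $f_n$ preserves the $t$--coordinate, applying it to a bridge presentation of $K$ creates no new extrema; hence $\bn_{\hatF}(K^n)$ is bounded by the bridge number of that fixed presentation.

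\smallskip
\textbf{Furthermore, case (2).} Two incompressible annuli in $M$ with the same boundary need not be isotopic rel boundary --- their union can be an essential torus --- so your ``standard parallelism argument'' is invalid, and you cannot conclude that twisting along $\hatR$ and along $\hatA$ agree up to isotopy. The paper never attempts this. Instead it observes that, because $\bdry A$ and $\bdry R$ have the same slopes on $T_1\cup T_2$, the Dehn twists along $R$ and along $A$ in $N$ both extend to homeomorphisms $M^n\to M$, where $M^n$ is $(-1/n,1/n)$--surgery on $L_1\cup L_2$. The $R$--twist carries $(M^n,K)$ to $(M,K^n)$; the $A$--twist, being supported away from $K$, carries $(M^n,K)$ to $(M,K)$. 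Composing yields a self-homeomorphism of $M$ taking $K$ to $K^n$, so $\bn_g(K^n)=\bn_g(K)$. No isotopy of annuli is needed.

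\smallskip
\textbf{The trichotomy.} The paper sidesteps your $\bdry$--incompressibility issue by never leaving $X$. If the annulus $A$ from Theorem~\ref{thm:generalconstraints}(2) has $\bdry A\not\simeq\bdry R$ on $T_1\cup T_2$, then $A$ is itself a catching surface for $(\hatR,K)$, and Theorem~\ref{thm:generalconstraints} is reapplied with $Q=A$. Either this yields the bridge-number bound, or it produces a second essential annulus $A'\subset X$ with $\bdry A'\not\simeq\bdry A$. Lemma~\ref{lem:boundaryslopesofannuli} is then applied in $X$ to the pair $A,A'$ (both essential in $X$, so the hypotheses are met), giving a $T^2\times I$ summand of $X$ bounded by $T_1\cup T_2$; surgering $R\cap X$ along the separating sphere yields an essential annulus in $X$ with the same boundary as $R$, which is conclusion~(2) of the corollary, not~(1). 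Your handling of the $T^2\times I$ branch is also unjustified: a $T^2\times I$ summand of $N$ tells you $M$ has a lens-space (or $S^3$ or $S^1\times S^2$) summand containing $\hatR$, but not that some genus $g$ Heegaard surface of $M$ can be arranged to contain $\hatR$.
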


\begin{proof}
Assume that conclusion $(3)$ above does not hold. Then there is genus $g$
Heegaard surface $\hatF$ of $M$ that fails inequality $(3)$ of 
Theorem~\ref{thm:generalconstraints}. Then 
Theorem~\ref{thm:generalconstraints} proves the Corollary unless there is an essential
annulus $A$ in $X$ with one component of $\partial A$ on $T_1$ and the other on $T_2$. We may also assume that $R$ is essential in $M - \nbhd(L_1 \cup L_2)$,
as otherwise conclusion $(1)$ of the Corollary will hold.
We show that $\partial A$ must have the same slopes as $\partial R$ on 
$T_1$ and $T_2$, giving conclusion $(2)$. Assume not. Then $\partial A$ must have different 
slopes on both
$T_1$ and $T_2$ from $\partial R$. Thus we may apply Theorem~\ref{thm:generalconstraints} 
using $A$ as the catching surface for $\hatR$. Again, this proves the Corollary
unless there is another essential annulus $A'$ in $X$ whose boundary has different
slopes on $T_1$ and $T_2$ from $\partial A$. Applying  
Lemma~\ref{lem:boundaryslopesofannuli} to $X$ shows that $X$ has $T^2 \times [0,1]$ as a
connected summand, where $T^2 \times \{0,1\}$ is $T_1,T_2$. Thus there is a $2$-sphere in $X$
separating $T_K$ from $T_1 \cup T_2$ and we may surger $R \cap X$ along this 2-sphere to 
obtained an essential annulus in $X$ with the same boundary as $R$, as desired.


We must show that if $(1)$ or $(2)$ hold, then $\{\bn_{g}(K^n)\}$ is a finite set.
Assume $(1)$ holds, and let $S$ be a genus $g$ Heegaard surface containing $\hatR$.
Now isotop $K$ keeping $\hatR$ fixed, so that it is bridge with respect to $S$.
Then $\{\bn_{g}(K^n)\}$ will be finite by the bridge number of this representative 
of $K$. 

Assume that $(2)$ holds. 
Let $M^n=M(-1/n,1/n)$ be the $-1/n,1/n$ Dehn surgeries on 
$L_1,L_2$ (respectively) in $M$ using the framings given by $R$. As in
Definition~\ref{def:twistalongR}, there is a homeomorphism of $M-\nbhd(L_1 \cup L_2)$ to itself,
$h_n$, that induces $h_n' \colon M^n \to M$. Furthermore, $h_n'$ identifies
the pair $(M^n,K)$ with the pair $(M,K^n)$. In the same way, twisting along $A$ induces a 
homeomorphism $f_n' \colon M^n \to M$ identifying the pair $(M^n,K)$ with $(M,K)$.
Thus $\bn_g(K^n)=\bn_g(K)$ for each $n$.



\end{proof}

\begin{remark}
Assume that conclusions $(1)$ and $(2)$ of Corollary~\ref{cor:iff} 
do not hold. The proof of Corollary~\ref{cor:iff} shows that
either

\begin{itemize}
\item[(A).] For each $n$, 

$$\bn_g(K^n) \geq 
\frac{\min(|q_1+n p_1|,|q_2-n p_2|)/\max(-36\chi(Q),6) -2g + 1}{2 \max(\Delta_K,1)}$$
\smallskip
where $(p_i,q_i)$ are the coordinates of $\partial Q$ on $T_i$
(framed by $R$ as above) and $\Delta_K$ is the distance on $T_K$ between a 
component of $\partial Q$ and a meridian of $K$ (setting $\Delta_K=0$ when
$Q$ is disjoint from $K$); or
\item[(B).] There is an annular catching surface $Q'$ for $\hatR$ in $M$. Let
$(r_i,s_i)$ be the coordinates of $\partial Q'$ on $T_i$ (framed by $R$). Then
for each $n$, 

$$\bn_g(K^n) \geq \min(|s_1+n r_1|,|s_2-n r_2|)/12 -g + 1/2$$

\end{itemize}

\end{remark}

We finish with the proof of the following. 

\begin{corbridgenumbers}
Assume $M$ closed and orientable and let $K \cup L_1 \cup L_2$ be a
link in $M$. Let $\hatR$ be an annulus in $M$ with 
$\partial \hatR= L_1 \cup L_2$, and let $R$ be the annulus $\hatR \cap (M - \nbhd(L_1 \cup L_2))$ properly
embedded in $M - \nbhd(L_1 \cup L_2)$. 
Assume that $(\hatR,K)$ in $M$ is caught. Let $X=M - \nbhd(K \cup L_1 \cup L_2)$ and $T_1,T_2$
be the components of $\partial X$ coming from $L_1,L_2$.
Assume there is no properly embedded, essential annulus $A$ in 
$X$ such that 
$\partial A \cap (T_1 \cup T_2)$ is
isotopic to $\partial R \cap (T_1 \cup T_2)$ on $T_1 \cup T_2$.
Fixing an orientation on $M$, let $K^n$ be $K$ twisted $n$ times along 
$\hatR$. 

If $M=S^3$ and $L_1 \cup L_2$ is not the trivial link, then  
$\bn_{0}(K^n) \to \infty$ as $n \to \infty$.

If $M$ is a lens space and $L_1 \cup L_2$ is not a lens space torus link, then 
$\bn_{1}(K^n) \to \infty$ as $n \to \infty$.

If $M$ has Heegaard genus at most $2$, then either 
$\bn_{2}(K^n) \to \infty$ as $n \to \infty$ or
one of the following holds:
\begin{itemize}
\item[(a).] $L_1$ has tunnel number $1$ in $M$ (or bounds a disk in $M$);
\item[(b).] $L_1$ is a cable of a tunnel number $1$ knot in $M$ where the slope of the cabling annulus is that of $\bdry R \cap T_1$; or
\item[(c).] $0$--surgery (as framed by $R$) on $L_1$ contains an essential torus.
\end{itemize}
As $L_1$ and $L_2$ are isotopic in $M$, if any of $(a)-(c)$ holds for $L_1$, then it also hold
for $L_2$. 
\end{corbridgenumbers}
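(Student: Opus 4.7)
The plan is to apply Corollary~\ref{cor:iff} at each of $g = 0, 1, 2$. The hypothesis that $X$ contains no essential annulus whose boundary on $T_1 \cup T_2$ is isotopic to $\bdry R$ rules out conclusion~(2) of that corollary in all three cases, so it suffices to analyze conclusion~(1), namely that $\hatR$ can be isotoped onto a genus $g$ Heegaard surface $\hatF$ of $M$. If (1) also fails then (3) of Corollary~\ref{cor:iff} gives $\bn_g(K^n) \to \infty$.

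For $g = 0$, $\hatF$ is a Heegaard $2$-sphere in $S^3$; each boundary circle of the annulus $\hatR \subset S^2$ bounds a disk on $\hatF$, and extending these disks into the two $3$-balls of $S^3 \setminus \hatF$ on opposite sides of $\hatR$ exhibits $L_1 \cup L_2$ as the trivial link, contradicting the hypothesis. For $g = 1$, $\hatF$ is a Heegaard torus, and the two components of $\bdry \hatR$ are parallel essential curves on $\hatF$, so $L_1 \cup L_2$ is a lens space torus link, again contrary to hypothesis.

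For $g = 2$, we must show that when $L_1, L_2 \subset \hatF$ lie on a genus $2$ Heegaard surface of $M = H_1 \cup_{\hatF} H_2$, one of (a), (b), (c) holds for $L_1$. The plan is to apply the standard classification of simple closed curves on a genus $2$ Heegaard surface. If $L_1$ bounds a disk in some $H_i$, or meets an essential disk of some $H_i$ in a single point (the primitive case), then $L_1$ has tunnel number $1$ in $M$ (or bounds a disk in $M$), yielding (a). Otherwise, consider the $0$-surgery on $L_1$ using the framing given by $R$: cutting $\hatF$ along $L_1$ and capping the resulting boundary circles with the two meridian disks of the new solid torus produces a closed surface $\hatF'$ in the surgered manifold that is either a torus (if $L_1$ is non-separating on $\hatF$) or a pair of tori (if separating). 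If some component of $\hatF'$ is essential in the surgered manifold, we have (c). If every component compresses, each such compression corresponds either to a primitive structure (contradicting our assumption) or to a Seifert-primitive structure for $L_1$ on one side of $\hatF$; the latter exhibits $L_1$ as a cable of a tunnel number one knot, and the cabling annulus framing matches that of $\bdry R$, giving (b). The final assertion of the Corollary is immediate, since the annulus $\hatR$ provides an ambient isotopy in $M$ from $L_1$ to $L_2$, making (a)--(c) symmetric in $L_1, L_2$.

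The main obstacle is the $g = 2$ case, and specifically the compressibility analysis of $\hatF'$: one must verify that compressions of $\hatF'$ precisely correspond to primitive or Seifert-primitive structures for $L_1$ relative to $\hatF$, and that in the Seifert-primitive case the cabling slope genuinely matches the framing of $\bdry R$ on $T_1$. This framing identification is what ensures the sharp form of conclusions (b) and (c), and ruling out unwanted compressions that would neither produce a cabling structure nor reduce to the primitive case is the technical heart of the argument.
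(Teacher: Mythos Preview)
Your overall strategy---invoke Corollary~\ref{cor:iff} and then analyze what it means for $\hatR$ to sit on a genus $g$ Heegaard surface---is exactly the paper's, and your treatment of $g=0,1$ matches.

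For $g=2$ the paper takes a shorter path than you do. Rather than passing to the surgered manifold and studying $\hatF'$, the paper works directly with $F=\hatF-\nbhd(L_1)$ inside $M-\nbhd(L_1)$. If $F$ is compressible there, a compressing disk in some $H_i$ disjoint from $L_1$ immediately exhibits $L_1$ as lying on a solid-torus boundary obtained by compressing $\hatF$, hence $L_1$ is a cable of a core of $H_i$; this gives (a) or (b) with no appeal to any ``classification of curves on a genus~$2$ surface''. If $F$ is incompressible, the paper invokes the Handle Addition Lemma (Lemma~2.1.1 of \cite{cgls:dsok}) to conclude that the capped-off surface after $0$--surgery is incompressible, yielding (c).

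Your route through $\hatF'$ is the mirror image of this, but the step you flag as ``the technical heart''---that a compression of $\hatF'$ forces a primitive or Seifert-primitive structure for $L_1$ on $\hatF$---is precisely the contrapositive of the Handle Addition Lemma. Without naming and using that lemma, the step is a genuine gap: a compressing disk for $\hatF'$ in the surgered manifold may run through the attached solid torus, and it is not elementary to extract from it a compressing disk for $F$ in $M-\nbhd(L_1)$. Once you do cite the Handle Addition Lemma, your argument collapses to the paper's. The framing check you worry about is straightforward: since $\hatR\subset\hatF$, the $\hatF$--framing of $L_1$ equals the $\hatR$--framing, and the cabling annulus (a neighborhood of $L_1$ in the compressed torus) inherits exactly this slope.
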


\begin{proof}
Under the hypotheses given, Corollary~\ref{cor:iff} implies that if $\bn_g(K^n)$ does not tend to infinity with $n$ then $\hatR$ lies on a genus $g$ Heegaard splitting of $M$,
$H_1 \cup_{\hatF} H_2$.
The conclusions for $g=0$ and $g=1$ are then immediate. 
So assume $g=2$.   

If $F=\hatF - \nbhd(L_1)$ is compressible in the complement of $L_1$, then such a compression shows that $L_1$ is a cable of a core of either $H_1$ or $H_2$.  In this case either $L_1$ has tunnel number $1$ or is the cable of a tunnel
number one knot.   If on the other hand $F$ is incompressible, then the Handle Addition Lemma (Lemma~2.1.1 of \cite{cgls:dsok}) implies that surgery on $L_1$ along the slope induced by $F$ is toroidal.  
\end{proof}

\section{Application to Teragaito's Example and Some Generalizations}\label{sec:teragaito}
Osoinach describes a construction producing infinitely many distinct knots in $S^3$ (or some other manifold) for which the same integral surgery on each knot yields the same new manifold $M$, \cite{osoinach}.  Dually, this may be viewed as infinitely many distinct knots in a manifold $M$ (that is, no homeomorphism of $M$ takes one knot to the other) for which the same integral surgery yields $S^3$.     Teragaito gives a specific example of this construction in which the manifold $M$ is a small Seifert fiber space \cite{teragaito}.  We produce a two-parameter generalization of Teragaito's examples in which the resulting manifolds $M$ have Heegaard genus two and are typically hyperbolic.   We apply Corollary~\ref{cor:bridgenumbers} to show that Teragaito's family of knots and 
our generalizations (for large parameter values) have genus $2$ bridge numbers 
in $M$ that tend to infinity.  Let us first overview Teragaito's example.

Teragaito describes a $3$--component link $\calL' = K'\cup L_1 \cup L_2$ in $S^3$ where $L_1 \cup L_2$ is the boundary of an annulus $A$ and there is a pair of pants $R$ (that intersects the interior of $A$) expressing $K'$ as a banding of $L_1 \cup L_2$ and meeting $L_1 \cup L_2$ with the same framing as $A$ as shown in Figure~\ref{fig:teragaitolink}.  Frame the components of the link $\calL'$ with $R$.  Then, as Teragaito shows, $0$--surgery on $K'$ (that is, a $+4$--surgery with respect to the Seifert framing) produces a small Seifert fiber space $M$ containing the knot $K$ dual to the surgery and an annulus $\hatR$ with boundary $L_1 \cup L_2$.   The annulus $\hatR$ is obtained after surgery by capping off the $K'$ component of $\bdry R$ with a disk.  Indeed the interior of $\hatR$ is pierced once by $K$ in $M$.

\begin{figure}
\includegraphics[width=5in]{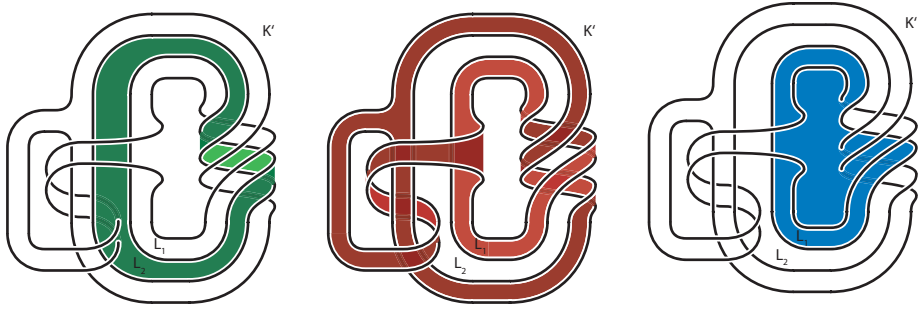}
\caption{The link $\calL' =K' \cup L_1 \cup L_2$ shown with the green annulus $A$, the red pair of pants $R$, and the blue disk $\hatQ$.}
\label{fig:teragaitolink}
\end{figure}


We generalize Teragaito's example by inserting extra twists in two regions.  Figure~\ref{fig:teragaitolinkwtwists} is the same as Figure~\ref{fig:teragaitolink} except that two (unlinked) unknots $J_0$ and $J_1$ have been added and the full twist on the right hand side of the link $\calL'$ has been undone which may be restored by a $-1$--surgery along $J_1$.   Produce the link $\calL'_{j_0,j_1} = K'_{j_0,j_1} \cup (L_1)_{j_0,j_1} \cup (L_2)_{j_0,j_1}$ in $S^3_{j_0,j_1} \cong S^3$ by performing $-1/j_0$--surgery on $J_0$ and $-1/j_1$--surgery on $J_1$.  The link $\calL'_{0,1} = \calL'$ is the link used in Teragaito's example.  As one may conclude from Figure~\ref{fig:teragaitolinkwtwists}, $(L_1)_{j_0,j_1} \cup (L_2)_{j_0,j_1}$ cobound a green annulus $A_{j_0,j_1}$ and there is a red pair of pants $R_{j_0,j_1}$ (intersecting the interior of $A_{j_0,j_1}$) expressing $K'_{j_0,j_1}$ as a banding of $(L_1)_{j_0,j_1} \cup (L_2)_{j_0,j_1}$ and meeting $(L_1)_{j_0,j_1} \cup (L_2)_{j_0,j_1}$ in the same framing as $A_{j_0,j_1}$.  The component $J_0$ links the banding so that $-1/j_0$--surgery on $J_0$ inserts $j_0$ full twists into the band.  

Frame the components of the link $\calL'_{j_0,j_1}$ with $R_{j_0,j_1}$.  (Observe that each of the link components of Figure~\ref{fig:teragaitolinkwtwists} is an unknot and the framing induced by $R$ is the standard Seifert framing.  Twisting along $J_0$ and $J_1$ will twist these framings.) Then $0$--surgery on $K'_{j_0,j_1}$ produces a manifold $M_{j_0,j_1}$ containing the knot $K_{j_0,j_1}$ dual to the surgery and an annulus $\hatR_{j_0,j_1} \subset M_{j_0,j_1}$ with boundary $(L_1)_{j_0,j_1} \cup (L_2)_{j_0,j_1}$.   The annulus $\hatR_{j_0,j_1}$ is obtained after surgery by capping off the $K'_{j_0,j_1}$ component of $\bdry R_{j_0,j_1}$; the interior of $\hatR_{j_0,j_1}$ is pierced once by $K_{j_0,j_1}$ in $M_{j_0,j_1}$.

As $(L_1)_{j_0,j_1}$ is an unknot in $S^3$, it bounds a disk $\hatQ_{j_0,j_1}$.  This disk is punctured $2|j_1|$ times by $K'_{j_0,j_1}$ and $|j_1|$ times by $(L_2)_{j_0,j_1}$. Let $X_{j_0,j_1}$ be the exterior of the link $\calL'_{j_0,j_1}$ in $S^3$.  
Let $Q_{j_0,j_1} = \hatQ_{j_0,j_1} \cap X_{j_0,j_1}$ be this $3|j_1|$--punctured disk properly embedded in 
$X_{j_0,j_1}$ suggested in blue by the right hand picture in Figure~\ref{fig:teragaitolinkwtwists}.  The blue $3$--punctured disk $Q_{0,1}$ is shown in Figure~\ref{fig:teragaitolink}.

Let us now drop the subscripts $j_0,j_1$ from our notation except when needed.
Thus hereafter $K',L_1,L_2, A, R, M, X$ correspond to those with subscripts $j_0,j_1$. 

$A$ is an annulus in $S^3$ with $\partial A = L_1 \cup L_2$ and $\hatR$ is an
annulus in $M$ with $\partial \hatR = L_1 \cup L_2$. Twisting $K'$ along $A$
produces the family of knots $\{K'^n\}$ in $S^3$ and twisting $K$ 
along $\hatR$ produces the family $\{K^n\}$ in $M$. 
Let $M_n$ ($S^3_n$) be the manifold obtained from $M$ ($S^3$, resp.) by $-1/n$--surgery on $L_1$ and $1/n$--surgery on $L_2$. In both $M_n$ and $S^3_n$ we 
continue to use the names $L_1$ and $L_2$ for the knots dual to these Dehn surgeries. As in Definition~\ref{def:twistalongR}, there are homemorphisms identifying
the pair $(M_n,K \cup L_1 \cup L_2)$ with $(M,K^n \cup L_1 \cup L_2)$ and the
pair $(S^3_n,K' \cup L_1 \cup L_2)$ with $(S^3,K'^n \cup L_1 \cup L_2)$.
Use the framing on $K'$ (by $R$) and the identification $(S^3_n,K') \cong 
(S^3,K'^n)$ to assign a framing to $K'^n$. Then the knot dual to the 
$0$--surgery on $K'^n$ in $S^3$ is the knot dual to the $0$--surgery on $K'$
in $S^3_n$, which by definition is $K$ in $M_n$. But this is identified with
$K^n$ in $M$. That is, we see that
$K^n$ is the dual knot to the $0$--surgery on $K'^n$. 
Finally, observe that 
$X = S^3 - \nbhd(K' \cup L_1 \cup L_2) \cong S^3 - \nbhd(K'^n \cup L_1 \cup L_2) \cong M-\nbhd(K^n \cup L_1 \cup L_2) \cong M - \nbhd(K \cup L_1 \cup L_2)$.

\begin{figure}
\includegraphics[width=5in]{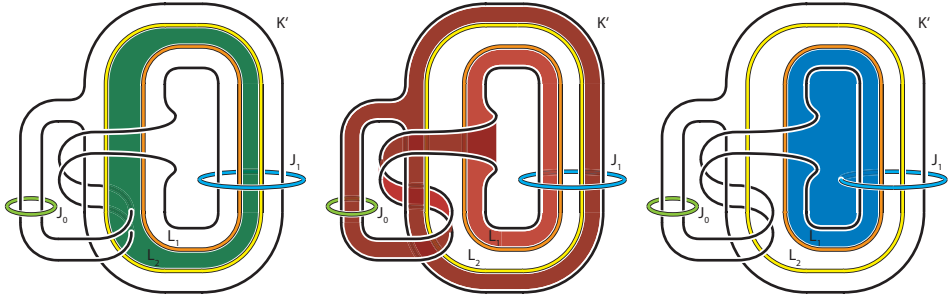}
\caption{The link $\calL' =K' \cup L_1 \cup L_2$ shown with the green annulus $A$, the red pair of pants $R$, and the blue disk $\hatQ$.}
\label{fig:teragaitolinkwtwists}
\end{figure}


%

\begin{thmteragaitobridgenumbers}
Let $\{K'^n\}$ be the Teragaito family of knots in $S^3$. For each $n$, let
$K^n \subset M$ be the $+4$-surgery-dual to $K'^n$ with respect to the Seifert 
framing on $K'^n$.  Then $\bn_0(K'^n) \to \infty$ and 
$\bn_2(K^n) \to \infty$ as $n \to \infty$. 
\end{thmteragaitobridgenumbers}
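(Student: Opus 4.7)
The plan is to apply Corollary~\ref{cor:bridgenumbers} in two parallel ways, exploiting the homeomorphism $X = S^3 - \nbhd(K' \cup L_1 \cup L_2) \cong M - \nbhd(K \cup L_1 \cup L_2)$. Viewing $X$ as sitting in $S^3$, the pair $(A, K')$ should yield $\bn_0(K'^n) \to \infty$; viewing $X$ as sitting in $M$, the pair $(\hatR, K)$ should yield $\bn_2(K^n) \to \infty$. The non-triviality inputs to Corollary~\ref{cor:bridgenumbers} are immediate from Figure~\ref{fig:teragaitolink}: $L_1 \cup L_2 \subset S^3$ has nonzero linking number so is not the trivial link, and in $M$ (which has Heegaard genus $2$) the assumption that $L_1 \cup L_2$ be avoided as a ``lens space torus link'' is automatic since $M$ is not a lens space.

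The first step is to produce a single catching surface that serves both applications. The natural candidate is $Q = \hatQ \cap X$, where $\hatQ$ is the obvious disk in $S^3$ bounded by the unknot $L_1$; by construction $Q$ is a $3$-punctured disk meeting $T_2$ in meridians of $L_2$, $T_{K'}$ in meridians of $K'$, and $T_1$ in a Seifert-disk slope for $L_1 \subset S^3$. A direct reading from Figure~\ref{fig:teragaitolink} computes the homology classes $[\partial Q \cap T_i] = p_i[\mu_i] + q_i[\lambda_i]$ in the $R$-framing and shows $p_i \neq 0$ with the $R$-slope avoided on both $T_1$ and $T_2$. Since the $R$-framed longitude on $T_i$ is simultaneously the slope of $\partial A$ in $S^3$ and of $\partial \hatR$ in $M$, the same $Q$ catches both $(A, K')$ and $(\hatR, K)$.

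The remaining hypothesis to verify in both applications is that $X$ contains no essential annulus whose boundary slopes on $T_1 \cup T_2$ agree with those of $\partial R$. For the $\bn_2$ application I must additionally rule out alternatives (a)-(c) of Corollary~\ref{cor:bridgenumbers} for $L_1 \subset M$: that $L_1$ has tunnel number $1$ (or bounds a disk), that $L_1$ is a cable of a tunnel-number-one knot in $M$ with cabling slope $\partial R \cap T_1$, or that the $R$-framed $0$-surgery on $L_1$ is toroidal. The no-annulus condition is a Dehn-filling argument: such an annulus, together with the $R$-framed fillings of $T_1$ and $T_2$ (which produce $S^3$ on one side and Teragaito's Seifert-fibered $M$ on the other), would yield an essential torus or annulus in $S^3$ or in $M$, contradicting the explicit Seifert-fibered (hence atoroidal, anannular) structure of $M$ established by Teragaito. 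The alternatives (a)-(c) can be ruled out by exploiting Teragaito's explicit description of $M$ and the position of $L_1$ relative to its Seifert fibration.

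The main obstacle will be this last step of excluding the essential annulus in $X$ and ruling out alternatives (a)-(c) for $L_1 \subset M$; both require a concrete geometric analysis of Teragaito's Seifert fibered space and of the embedding $L_1 \subset M$. Once carried out, Corollary~\ref{cor:bridgenumbers} delivers both $\bn_0(K'^n) \to \infty$ and $\bn_2(K^n) \to \infty$ simultaneously, since by the identification of twisting operations set up just before the theorem statement, $K'^n$ is $K'$ twisted $n$ times along $A$ in $S^3$ and $K^n$ is $K$ twisted $n$ times along $\hatR$ in $M$.
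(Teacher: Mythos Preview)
Your overall strategy---apply Corollary~\ref{cor:bridgenumbers} to $(A,K')$ in $S^3$ and to $(\hatR,K)$ in $M$, using the same catching surface $Q$---is exactly the paper's. The catching surface and the observation that $L_1\cup L_2$ is nontrivial in $S^3$ are also the same.

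There is a genuine gap in your ``no essential annulus'' step. Your proposed Dehn-filling argument does not work as written: filling $T_1$ and $T_2$ along the $R$-slope does not produce $S^3-\nbhd(K')$ or $M-\nbhd(K)$ (those arise from filling along the \emph{meridians} of $L_1,L_2$, which are different from the $R$-slope), so the capped-off torus you describe lives in neither $S^3$ nor $M$, and atoroidality of $M$ gives no contradiction. Even if you instead fill $T_K$ and keep the annulus in $S^3-\nbhd(L_1\cup L_2)$ or $M-\nbhd(L_1\cup L_2)$, those manifolds already contain the essential annulus $R$ (resp.\ $\hatR$), so a second such annulus is no contradiction. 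The paper sidesteps all of this: Teragaito proved that $X$ itself is hyperbolic, so $X$ contains no essential annulus whatsoever, and the hypothesis of Corollary~\ref{cor:bridgenumbers} is immediate.

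For ruling out alternatives (a)--(c), you correctly flag this as the main obstacle, but your sketch (``exploit the position of $L_1$ relative to the Seifert fibration of $M$'') is not how the paper proceeds and would not easily yield the tunnel-number bound. The paper's Lemma~\ref{lem:verify} passes to the tangle quotient via the Montesinos trick (Figures~\ref{fig:simplifyingisotopy}--\ref{fig:tangleisotopy}), and the core of the argument that $L_1$ has tunnel number greater than one is a nontrivial application of Kobayashi's classification of toroidal genus~$2$ manifolds to certain double branched covers (Claims~\ref{clm:j1=1} and~\ref{clm:not2bridge}). The atoroidality of the $\hatR$-framed surgery on $L_1$ is checked by an explicit tangle computation (Figure~\ref{fig:tangleatoroidal}). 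None of this is visible from the Seifert structure of $M$ alone.
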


\begin{proof}
%
Recall that the Teragaito family is where $j_0=0,j_1=1$ and the $+4$-surgery
in the Seifert framing is our $0$-surgery when framed by $R$.
The $3$--punctured disk $Q$ in the exterior $X$ obtained from the disk $\hatQ$ has 
one component of its boundary on $\bdry \nbhd(L_1)$ and one component of its 
boundary on $\bdry \nbhd(L_2)$.  As $j_1 \neq 0$ these slopes both differ from 
the slopes of $\bdry \hatR$,  and so $Q$ catches $(\hatR, K)$ and $(A,K')$.  



Teragaito shows the link exterior, $X$, of $K' \cup L_1 \cup L_2$ in $S^3$ (and 
of $K \cup L_1 \cup L_2$ in $M$) is hyperbolic; hence, in particular
$X$ contains no essential annulus. We apply Corollary~\ref{cor:bridgenumbers}. As $L_1 \cup L_2$ is
not trivial in $S^3$, $\bn_0(K'^n) \to \infty$ as $n \to \infty$. By 
Lemma~\ref{lem:verify}, $\bn_2(K^n) \to \infty$ as $n \to \infty$.
\end{proof}

The Teragaito family $\{K^n\}$ is thus a family of knots in the Seifert fiber space $M$ of 
unbounded bridge number each of which nevertheless admits an $S^3$ surgery. We show that the
above generalization yields such families of knots (arbitrarily large genus 2 bridge number
where each knot admits an $S^3$ surgery) in manifolds $M$ which are hyperbolic.

\begin{defn}\label{def:kappa}
{\tt SnapPy} \cite{snappy} shows that the manifold $W = S^3-\nbhd(K' \cup J_0 \cup J_1)$ of Figure~\ref{fig:teragaitolinkwtwists} is hyperbolic.  It also verifies that $W_0$, the Dehn filling of $W$ along the slope of $\bdry R$ (i.e.\ slope $0$) on the component of $\bdry W$ coming from $\bdry \nbhd(K')$, is hyperbolic. {\tt SnapPy} also shows that $Y = S^3-\nbhd(K' \cup J_0 \cup J_1 \cup L_2)$ is hyperbolic.
By Thurston's Hyperbolic Dehn Surgery Theorem, there is a constant $\nu$, which we will take to be greater than $2$, such that as long as $\min\{|j_0|,|j_1|\} \geq 
\nu$ then
\begin{itemize}
\item $M$, which is the Dehn filling of $W_0$ along the slopes 
$-1/j_0$ and $-1/j_1$ on the components of $W_0$ coming from $\bdry \nbhd(J_0)$ and $\bdry \nbhd(J_1)$ respectively, is hyperbolic.
\item $Y_{j_0}$, the $-1/j_0$-Dehn filling of $Y$ along the component of
$\partial Y$ coming from $\bdry \nbhd(J_0)$, is hyperbolic. 
\end{itemize}
\end{defn}


\begin{thm}\label{thm:hyperbolicexamples}
For $|j_0|,|j_1| \geq \nu$, $M$ has Heegaard genus $2$ and $\bn_2(K^n) \to \infty$ as $n \to \infty$.
\end{thm}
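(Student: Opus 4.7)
The plan is to apply Corollary~\ref{cor:bridgenumbers} with $g=2$ to the caught pair $(\hat R, K)$ in $M$. Catching by the punctured disk $Q$ was already observed in the setup, since $|j_1| \neq 0$ forces the boundary slopes of $Q$ on $T_1, T_2$ to differ from $\partial R$. To conclude $\bn_2(K^n)\to\infty$ it then remains to verify that $M$ has Heegaard genus at most $2$, that no essential annulus $A \subset X = M - \nbhd(K \cup L_1 \cup L_2)$ has $\partial A$ isotopic to $\partial R$ on $T_1 \cup T_2$, and that none of the alternatives (a), (b), (c) of Corollary~\ref{cor:bridgenumbers} holds.

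For the Heegaard genus bound I would read a genus-$2$ splitting of $M$ directly off of the surgery diagram of Figure~\ref{fig:teragaitolinkwtwists}, using that the banding construction places $K'_{j_0,j_1}$ on a natural low-genus surface in $S^3_{j_0,j_1}$; combined with the hyperbolicity of $M$ from Definition~\ref{def:kappa}, which rules out Heegaard genera $0$ and $1$, this gives Heegaard genus exactly $2$. For the non-existence of essential annuli in $X$ I would establish the stronger statement that $X$ itself is hyperbolic. Since $X$ is homeomorphic to the exterior in $S^3$ of $K'_{j_0,j_1} \cup (L_1)_{j_0,j_1} \cup (L_2)_{j_0,j_1}$, which is the $(-1/j_0, -1/j_1)$-Dehn filling along $(J_0, J_1)$ of the $5$-component link complement $Z = S^3 - \nbhd(K' \cup L_1 \cup L_2 \cup J_0 \cup J_1)$, I would extend the list of {\tt SnapPy} checks in Definition~\ref{def:kappa} to verify that $Z$ is hyperbolic, and then (enlarging $\nu$ if needed) invoke Thurston's Hyperbolic Dehn Surgery Theorem to conclude $X$ hyperbolic for $|j_0|, |j_1| \ge \nu$.

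The heart of the proof, and the main obstacle, is ruling out the three exceptional conclusions (a), (b), (c) of Corollary~\ref{cor:bridgenumbers}. Each is a structural constraint on $L_1 \subset M$ that I would re-express as a constraint on a particular Dehn filling of $Z$. For (c), the $0_R$-Dehn surgery on $L_1$ in $M$ is the four-slope Dehn filling of $Z$ at $(J_0, J_1, K', L_1)$ with slopes $(-1/j_0, -1/j_1, 0_R, 0_R)$; after a further {\tt SnapPy} verification that the intermediate partial filling at $K', L_1$ with slopes $0_R, 0_R$ is hyperbolic, the Hyperbolic Dehn Surgery Theorem gives this four-slope filling hyperbolic, hence atoroidal, for large $|j_0|, |j_1|$. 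For (b), a cabling of $L_1$ over a tunnel-number-$1$ knot in $M$ would produce an essential annulus in $M - \nbhd(L_1)$, which is itself a generic Dehn filling of $Z$ and so hyperbolic by the same line of argument, hence annulus-free.

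Condition (a) is the subtlest: tunnel number $1$ forces $M - \nbhd(L_1)$ to have Heegaard genus at most $2$, but this constraint is not immediately incompatible with hyperbolicity. To handle it I would exploit the twist construction directly, for example by producing, for $|j_0|, |j_1|$ large, an incompressible surface in $M - \nbhd(L_1)$ of genus greater than $2$ coming from the $J_0, J_1$-twists (or, alternatively, by ruling out the tunnel-number-$1$ Heegaard structure via explicit spine arguments that use the $R$-framing together with the isotopy between $L_1$ and $L_2$ afforded by $\hat R$). The case (a) is where the most work will go, as it does not reduce to a single hyperbolicity-of-filling argument the way (b) and (c) do.
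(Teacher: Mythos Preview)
Your overall strategy of invoking Corollary~\ref{cor:bridgenumbers} is correct, but the way you propose to verify its hypotheses breaks down because of a structural feature of the link that you have overlooked: as noted just before Lemma~\ref{lem:noannulus}, $L_1$ is isotopic to a meridian of $J_1$.  After $-1/j_1$--surgery on $J_1$ with $|j_1|>1$, this makes $L_1$ a $(j_1,1)$--cable of the surgery-dual $J_1'$, and it forces $X$ to decompose as $Y_{j_0}\cup_T C(|j_1|,r)$ with $C(|j_1|,r)$ a genuine cable space.  Consequently:
\begin{itemize}
\item $X$ is \emph{not} hyperbolic for $|j_1|\geq\nu\geq 3$ (the torus $T$ is essential), so your plan to rule out essential annuli by proving $X$ hyperbolic via {\tt SnapPy} on the $5$--component link exterior plus Thurston cannot succeed.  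The paper instead uses the decomposition $X=Y_{j_0}\cup_T C(|j_1|,r)$ directly and pushes a putative annulus into the hyperbolic piece $Y_{j_0}$ (Lemma~\ref{lem:noannulus}).
\item $M-\nbhd(L_1)$ is \emph{not} annulus-free: $L_1$ \emph{is} cabled in $M$, with cabling slope exactly that of $\bdry\hatR$.  So your approach to alternative~(b) fails; the point is not that $L_1$ is uncabled, but that the knot it is cabled over, namely $J_1'$, has tunnel number greater than $1$.  This is what the paper establishes (via Claim~\ref{clm:j1=1}) and then uses in part~(3) of Lemma~\ref{lem:verify}.
\item The $0_R$--surgery on $L_1$ in $M$ is computed in the paper (Figure~\ref{fig:tangleatoroidal}) to be $S^1\times S^2\,\#\,L(j_1,1)$, which is reducible and hence not hyperbolic; your Thurston argument for~(c) therefore cannot go through either, although the atoroidal conclusion itself is correct.
\end{itemize}

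For alternative~(a) you correctly sense that this is the hard case, but your sketch does not supply an argument.  The paper handles it (and also the Heegaard genus~$2$ bound) by passing to the quotient of a strong inversion, using the Montesinos trick to recast everything in terms of tangles and $2$--bridge/$3$--bridge links, and then appealing to Kobayashi's classification of toroidal genus~$2$ manifolds to show $M-\nbhd(L_1)$ (equivalently the exterior of $J_1'$) does not have tunnel number~$1$.  This combinatorial/double-branched-cover machinery is really what drives Lemma~\ref{lem:verify}, and there is no shortcut through generic hyperbolic Dehn filling.
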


\begin{proof}
This follows from Corollary~\ref{cor:bridgenumbers}, 
Lemma~\ref{lem:noannulus}, and Lemma~\ref{lem:verify}.
\end{proof}

\begin{lemma}\label{lem:noannulus}
Let $X=M - \nbhd(K^n \cup L_1 \cup L_2)$ and $T_1,T_2$ the components of 
$\partial X$ corresponding to $\partial \nbhd(L_1), \partial \nbhd(L_2)$. 
If $|j_0| \geq \nu, j_1 \neq 0$, there is no essential annulus in $X$ with one
boundary component on $T_1$ and the other on $T_2$. 
\end{lemma}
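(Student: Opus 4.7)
The plan is to argue by contradiction, viewing $X$ as a Dehn filling so as to leverage the hyperbolicity of $Y_{j_0}$. Write $X = Y_{j_0}^+(-1/j_1)$ where $Y_{j_0}^+ := Y_{j_0} - \nbhd(L_1)$, and let $J_1^* \subset X$ denote the core of this filling solid torus. The first step is to establish that $Y_{j_0}^+$ is hyperbolic. Since $Y_{j_0}$ is hyperbolic for $|j_0| \geq \nu$, it contains no essential sphere, annulus, or torus, so the only remaining conditions to verify for $Y_{j_0}^+$ to be hyperbolic are that $L_1$ neither bounds a disk in $Y_{j_0}$ nor is isotopic to a component of $\bdry Y_{j_0}$. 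Both conditions follow from Figure~\ref{fig:teragaitolinkwtwists}: since $j_1 \neq 0$, the disk $\hatQ_{j_0}$ in $S^3_{j_0}$ bounded by $L_1$ is genuinely punctured by $K' \cup J_1 \cup L_2$, so $L_1$ has no embedded bounding disk in $Y_{j_0}$ and is not parallel to any component of $\bdry Y_{j_0}$.

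Suppose now for contradiction that $A \subset X$ is an essential annulus with one boundary component on $T_1$ and the other on $T_2$. Isotope $A$ to minimize $|A \cap J_1^*|$. If $A \cap J_1^* = \emptyset$, then $A \subset Y_{j_0}^+$, and essentiality in $X$ transfers to $Y_{j_0}^+$: any compressing disk for $A$ in $Y_{j_0}^+$ may be isotoped off the new boundary $T_{J_1}$ and hence lies in the interior of $X$, while $A$ cannot be $\bdry$-parallel in $Y_{j_0}^+$ because its two boundary components lie on distinct tori. This contradicts the hyperbolicity of $Y_{j_0}^+$.

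If instead $A \cap J_1^* \neq \emptyset$, let $A' = A \cap Y_{j_0}^+$, a punctured annulus whose new boundary curves on $T_{J_1}$ all have slope $-1/j_1$. I would then adapt the graph-of-intersection machinery of Section~\ref{sec:lemmas}, with $A'$ in the role of $Q$ and meridional disks of $\nbhd(J_1^*)$ in the role of the level surfaces. Invoking the Parity Rule together with an argument parallel to the proof of Lemma~\ref{lem:largengivesnonhyp} should force an essential annulus or \mobius band with boundary on $\bdry Y_{j_0}^+$ to appear in $Y_{j_0}^+$, again contradicting its hyperbolicity.

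The main obstacle will be pushing the combinatorial analysis of Case 2 through without the luxury of $|j_1|$ being large—the bounds in Section~\ref{sec:lemmas} are tailored for large intersection numbers, so one likely needs to exploit the concrete structure of $Y_{j_0}^+$ coming from Figure~\ref{fig:teragaitolinkwtwists} rather than only the abstract estimates of Lemma~\ref{lem:largengivesnonhyp}. By comparison, the verification that $Y_{j_0}^+$ is hyperbolic is comparatively routine once $Y_{j_0}$ is hyperbolic.
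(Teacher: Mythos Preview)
Your argument has a genuine gap: the manifold $Y_{j_0}^+ = Y_{j_0} - \nbhd(L_1)$ is \emph{not} hyperbolic.  The point you overlook is that in Figure~\ref{fig:teragaitolinkwtwists} the curve $L_1$ is isotopic to a meridian of $J_1$ (this is stated explicitly in the paper and used again in the proof of Lemma~\ref{lem:verify}).  Since $Y_{j_0}$ has $\bdry\nbhd(J_1)$ as a boundary component, $L_1$ is isotopic into $\bdry Y_{j_0}$, and hence $Y_{j_0}^+$ contains an essential annulus running from $T_{L_1}$ to $T_{J_1}$.  Your verification that ``$L_1$ is not parallel to any component of $\bdry Y_{j_0}$'' checks only that $L_1$ does not cobound a $T^2\times I$ with an entire boundary torus; it does not rule out $L_1$ being isotopic to a \emph{curve} on a boundary torus, which is what actually happens and which is enough to produce an essential annulus.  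Consequently your Case~1 ($A\cap J_1^*=\emptyset$) no longer yields a contradiction, and the whole strategy collapses.  Your Case~2 is in any event only a sketch, and you yourself flag that the combinatorics of \S\ref{sec:lemmas} are not designed for small $|j_1|$.

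The paper's proof turns exactly this obstruction to your approach into the key tool.  Because $L_1$ is a meridian of $J_1$, doing $-1/j_1$--surgery on $J_1$ and then drilling out $L_1$ replaces $\nbhd(J_1)$ by a cable space $C(|j_1|,r)$, giving the decomposition $X = Y_{j_0}\cup_T C(|j_1|,r)$ with $T=\bdry\nbhd(J_1)$.  Essential annuli in a cable space are completely understood, so an essential annulus in $X$ from $T_1$ to $T_2$ can be made to meet $C(|j_1|,r)$ only in vertical annuli; what remains is an essential annulus in $Y_{j_0}$ from $T$ to $T_2$, contradicting the hyperbolicity of $Y_{j_0}$ (rather than of $Y_{j_0}^+$).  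No Dehn--filling or graph--of--intersection argument is needed.
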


\begin{proof}
As $L_1$ is isotopic to a meridian of $J_1$ in Figure~\ref{fig:teragaitolinkwtwists}, we can write $X = Y_{j_0} \cup_{T} C(|j_1|,r)$ where $Y_{j_0}$  is 
as in Definition~\ref{def:kappa}, $C(|j_1|,r)$ is cable space between $T$ and
$T_1$, and $T$ corresponds to $\partial \nbhd(J_1)$ in $\partial Y_{j_0}$.  
An essential annulus in $X$ with one boundary component on $T_1$ and the 
other on $T_2$, would give rise to an essential annulus in $Y_{j_0}$ 
with boundary on $T$ and $T_2$ ($T$ is incompressible in $Y_{j_0}$).
But this contradicts the hyperbolicity of $Y_{j_0}$.
\end{proof}

In support of the above theorems we proceed to show 

\begin{lemma}\label{lem:verify}
For any $j_0,j_1$, $M$ has Heegaard genus $2$. Furthermore, if either $(a)$
$j_1=\pm1$ and $|j_0| \neq 1,2$  or $(b)$ $|j_0|, |j_1|\geq \nu \geq 3$ then for each $i \in \{1,2\}$ the link component $L_i \subset M$
\begin{itemize} 
\item  has tunnel number greater than $1$,
\item is not a cable of a tunnel number one knot where the slope of the cabling 
annulus is that of $\partial \hatR$, and 
\item has an atoroidal Dehn surgery along the slope $\bdry \hatR$.  
\end{itemize}
\end{lemma}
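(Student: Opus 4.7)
The plan is to verify four claims separately, using that $L_1$ and $L_2$ are isotopic in $M$ via $\hatR$ so only $L_1$ need be treated. For the Heegaard genus (required for all $j_0, j_1$), I would exhibit a genus-$2$ splitting by reading off an unknotting tunnel for $K'_{j_0,j_1}$ in $S^3$ directly from Figure~\ref{fig:teragaitolinkwtwists}---an arc joining the two strands of the banding across $R$---so that $M$, as a Dehn surgery on the tunnel-number-one knot $K'_{j_0,j_1}$, has Heegaard genus at most $2$. To rule out lower genus, I would compute $H_1(M)$ from the linking data of $\calL'_{j_0,j_1}$ after the $-1/j_0$, $-1/j_1$ twists, and, since the resulting first homology will be cyclic, I would rule out $M$ being the corresponding lens space (or $S^3$, or $S^2\times S^1$) by showing $\pi_1(M)$ is non-cyclic via an essential surface derived from $\hatR$.

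For the remaining three conditions I would split into the two parameter regimes. In case (b), $|j_0|,|j_1| \geq \nu$, the input from Definition~\ref{def:kappa} combined with Thurston's Hyperbolic Dehn Surgery Theorem gives that $M - \nbhd(L_1)$ is hyperbolic---realized as a Dehn filling of the hyperbolic manifold $Y_{j_0}$ along a slope coming from $J_1$---and that the doubly filled closed manifold obtained from $M$ by $\partial\hatR$-surgery along $L_1$ is also hyperbolic. Hyperbolicity of $M - \nbhd(L_1)$ immediately precludes a tunnel-number-one structure on $L_1$ (which would force an incompatible essential annulus or disk) and the cable structure at slope $\partial\hatR$ (which would itself be an essential annulus with both boundary components on $\partial\nbhd(L_1)$); and hyperbolicity of the doubly filled manifold precludes essential tori. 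The only technical point to verify is that the relevant filling slopes lie within the Hyperbolic Dehn Surgery Theorem's range, which is absorbed into the choice of $\nu$.

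In case (a), $j_1 = \pm 1$ and $|j_0| \neq 1,2$, the manifold $M$ is Teragaito's small Seifert fiber space when $j_0 = 0$ and is obtained by $j_0$-fold twisting along the unknot $J_0$ for other $j_0$. I would explicitly identify the resulting Seifert structures---since $J_0$ sits as a fiber (or a curve in fiber-compatible position) with respect to a natural Seifert fibration on Teragaito's $M$, twisting along $J_0$ yields a one-parameter family of Seifert fiber spaces whose multiplicities depend on $j_0$---and locate $L_1$ and the slope $\partial\hatR$ within them. The excluded values $|j_0| = 1,2$ correspond precisely to the degenerations where an exceptional fiber becomes regular and $L_1$ falls into a tunnel-number-one position, becomes a cable at slope $\partial\hatR$, or the $\partial\hatR$-filling on $L_1$ develops an essential torus; for all other $j_0$ the three conditions hold.

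The main obstacle I foresee is the Seifert-fibered bookkeeping in case (a): one must explicitly describe the Seifert structure of $M$ as $j_0$ varies, track the position of $L_1$ and the slope $\partial\hatR$ within it, and verify that the three failure modes appear exactly at $|j_0| = 1,2$. This is the step that extends Teragaito's analysis beyond his specific example $j_0 = 0$ and requires careful control of fiber multiplicities under the twist along $J_0$.
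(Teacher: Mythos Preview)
Your case (b) strategy has several genuine errors. First, $M - \nbhd(L_1)$ is \emph{not} hyperbolic when $|j_1| > 1$: since $L_1$ is isotopic to a meridian of $J_1$, after $-1/j_1$--surgery it becomes a $(j_1,1)$--cable of the surgery-dual $J_1'$, so its exterior decomposes along an essential torus as (exterior of $J_1'$) $\cup$ (cable space). Thurston's theorem cannot produce hyperbolicity here; the cabling torus is there for every $j_1$ with $|j_1|>1$. Second, even if $M-\nbhd(L_1)$ were hyperbolic, that would \emph{not} preclude tunnel number one: plenty of hyperbolic knot exteriors (e.g.\ all hyperbolic two-bridge knots) have genus~$2$ Heegaard splittings. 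Hyperbolicity does correctly rule out the cabling annulus, but it is useless for the tunnel-number claim. Third, the $\bdry\hatR$--surgery on $L_1$ is \emph{not} hyperbolic: the paper computes it explicitly as $S^1\times S^2 \,\#\, L(j_1,1)$, which is reducible. It is atoroidal for that reason, not because of any Dehn-surgery rigidity.

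The paper's argument is quite different from what you propose and substantially harder. It passes to the quotient of a strong involution (the Montesinos trick), so that $M$ and $M-\nbhd(L_1)$ become double branched covers of explicit tangles. The Heegaard bound comes from a $3$--bridge presentation of the branch link. The $\bdry\hatR$--surgery is computed by a tangle replacement and isotopy. The tunnel-number statement is the heart of the proof: assuming tunnel number one, the paper fills $M-\nbhd(L_1)$ along suitable slopes to produce a closed genus--$2$ manifold containing an essential torus, and then runs through Kobayashi's classification of canonical torus decompositions of genus--$2$ Haken manifolds to reach a contradiction. This requires identifying several auxiliary tangles and their double branched covers (one turns out to be a $(-2,3,2j_0+3)$--pretzel knot exterior), which is where the excluded values $|j_0|=1,2$ actually arise. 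Your case (a) outline, tracking Seifert data under twists along $J_0$, presupposes that $M$ remains Seifert fibered for all $j_0$ when $j_1=\pm1$; the paper neither asserts nor uses this, and its argument works uniformly through the branched-cover picture.
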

 

\begin{figure}
\includegraphics[width=5in]{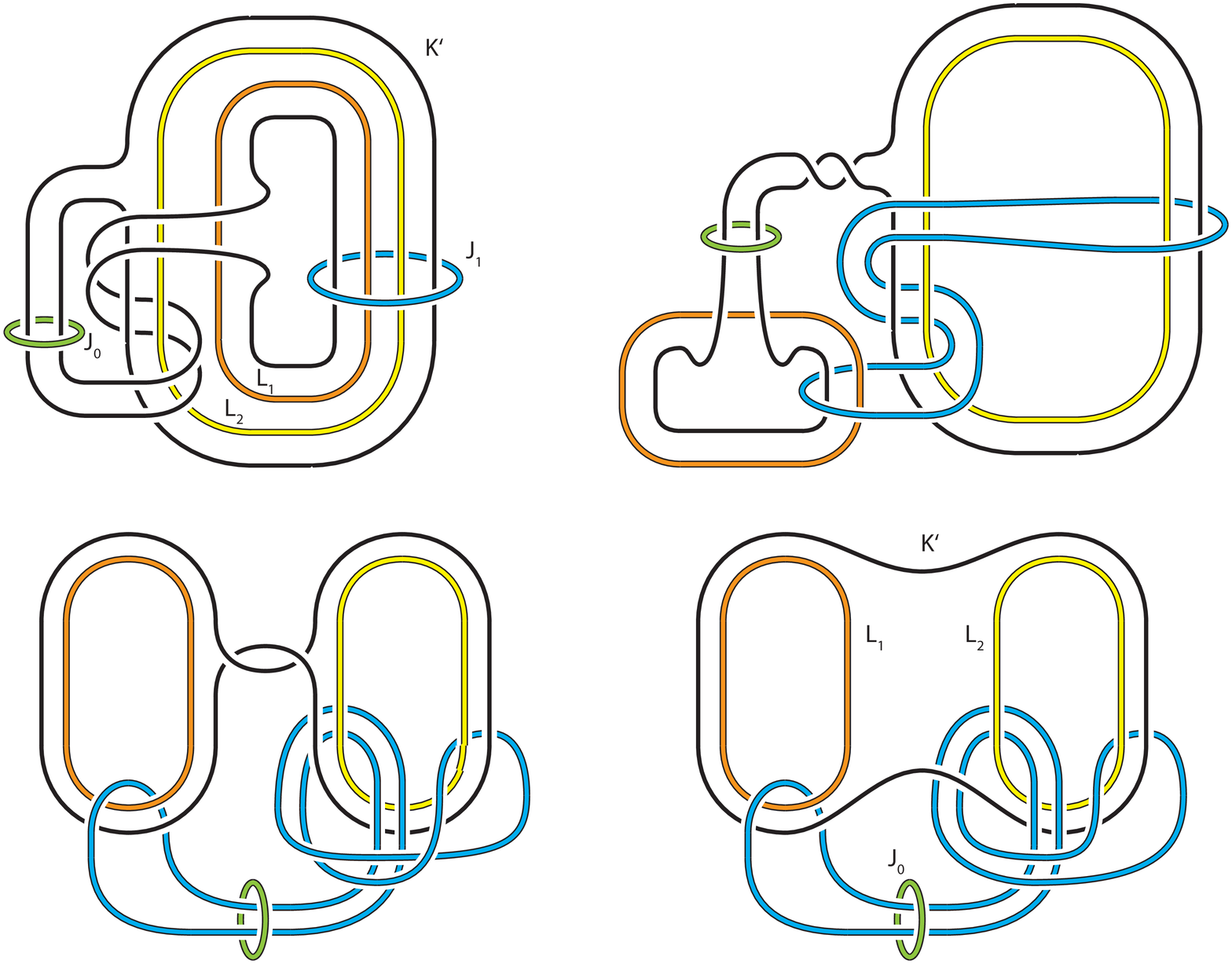}
\caption{}
\label{fig:simplifyingisotopy}
\end{figure}

\begin{figure}
\includegraphics[width=5in]{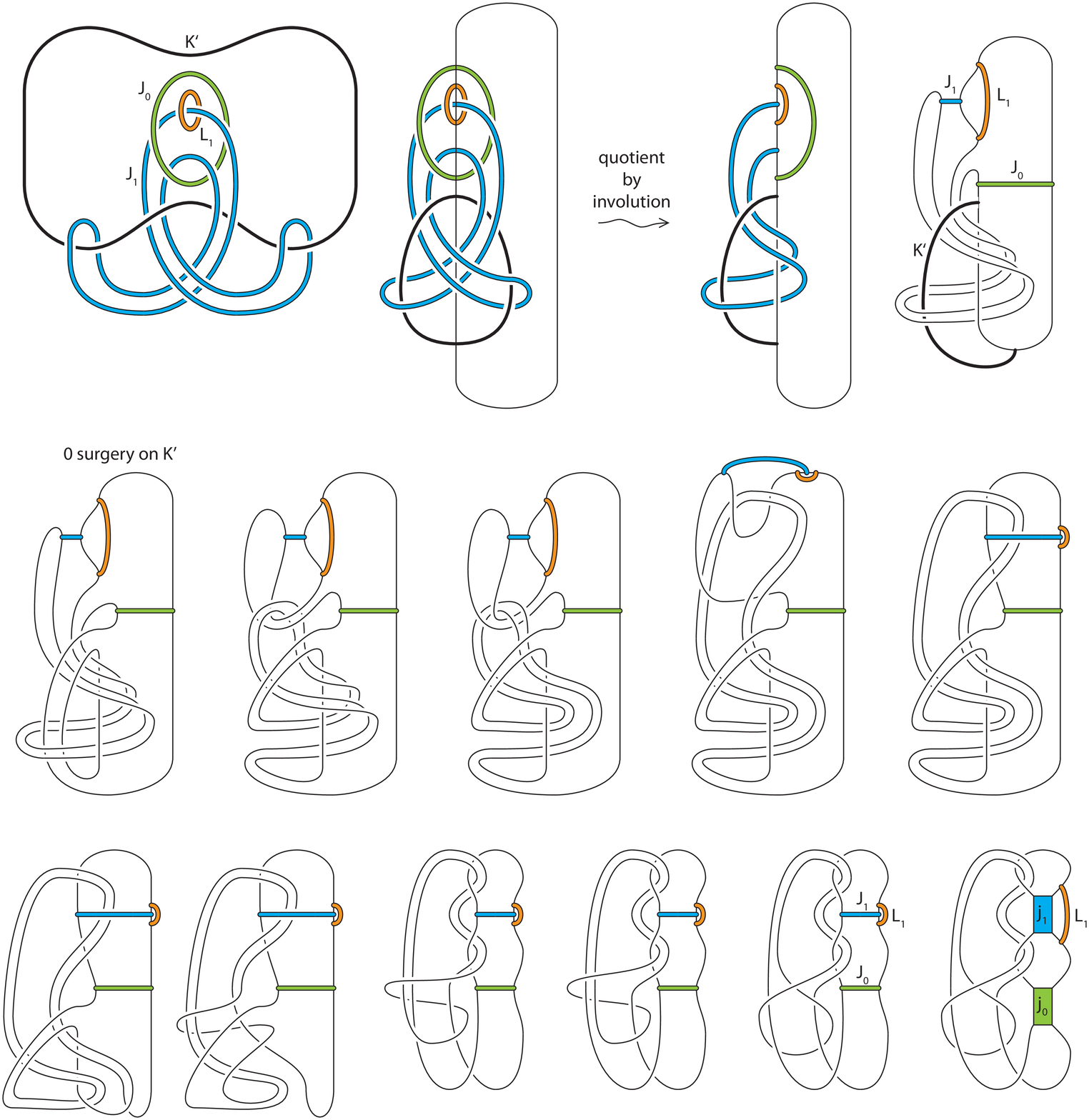}
\caption{}
\label{fig:tangleisotopy}
\end{figure}

\begin{proof}

We assume that $i=1$. Then the annulus $\hatR$ shows that the same statements
hold for $L_2$.

{\bf (1) $M$ has Heegaard genus $2$.}

To start, Figure~\ref{fig:simplifyingisotopy} shows an isotopy of 
$K' \cup L_1 \cup L_2 \cup J_0 \cup J_1$ into a simplified configuration.
After dropping $L_2$, performing a further isotopy makes the remaining link $K' \cup L_1 \cup J_0 \cup J_1$ strongly invertible as shown in the first picture of Figure~\ref{fig:tangleisotopy}.  The second picture continues the isotopy and exhibits the fixed set of this strong inversion.  Each component of the link is an unknot bounding a disk that is also invariant under the involution. The framings of these disks agree with their page framings (i.e.\ blackboard framings).  The third picture shows the quotient of the involution.  Each link component except $J_1$ projects to an arc with the page framing. The fourth picture shows an isotopy of the arc $J_1$ and the fixed set that restores its framing to the page framing.

The second row of Figure~\ref{fig:tangleisotopy} begins with a banding along (the arc corresponding to) $K'$ using its page framing.  By the Montesinos trick, this is equivalent to doing $0$--surgery on $K'$.  The remaining sequence of pictures of Figure~\ref{fig:tangleisotopy} up to the penultimate one exhibit isotopies of the fixed set and the arcs corresponding to $L_1 \cup J_0 \cup J_1$.  Throughout these isotopies the page framings of the arcs are unaltered. 

The final picture of Figure~\ref{fig:tangleisotopy} replaces the horizontal arcs $J_0$ and $J_1$ with rectangles indicating vertical runs of $|j_0|$ and $|j_1|$ half-twists.  The signs of $j_0$ and $j_1$ dictate the handedness of the twists as illustrated in Figure~\ref{fig:twistboxes}.  These replacements correspond to performing $-1/j_0$ and $-1/j_1$ surgeries on $J_0$ and $J_1$ in the double branched cover.  The double branched cover of the resulting link $\ell$ is the manifold $M=M_{j_0,j_1}$.   Since the link $\ell$ is $3$--bridge, $M$ is a manifold of Heegaard genus at most $2$.

\begin{figure}
\includegraphics[width=2.5in]{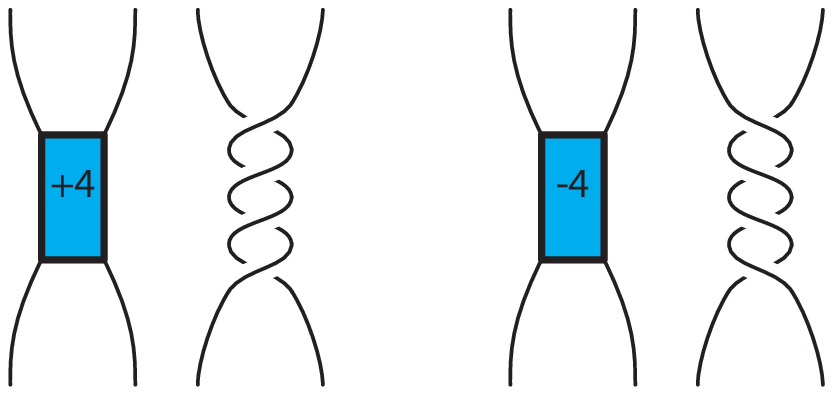}
\caption{}
\label{fig:twistboxes}
\end{figure}

{\bf (2) $L_1$ does not have tunnel number one.}\\
The arc $L_1$ in the final picture of Figure~\ref{fig:tangleisotopy} lifts to the knot $L_1$ in $M$.  The exterior of a small ball neighborhood of the arc $L_1$
is a ball that intersects the fixed set of the quotient in a tangle, $\omega$.
The double cover of this outside ball branched over $\omega$ is $M - \nbhd(L_1)$.  

When $|j_1| = 1$, the tangle $\omega$ is isotopic to the tangle $\tau$ of
Claim~\ref{clm:j1=1}, which shows that its double branched cover does 
not have tunnel number one. 

So assume $|j_1| > 1$ and that $M - \nbhd(L_1)$ has tunnel number one. 
The double branched cover of the link we get by adding
a rational tangle to $\omega$ is a 
Dehn filling of $M - \nbhd(L_1)$. Any such Dehn filling must have Heegaard
genus at most two. Consider the link, $l$, gotten by adding $n > 2$ vertical
twists to $\omega$. The resulting link is the union of $\tau$ 
(Figure~\ref{fig:tau})
and $\mu$ where $\mu$ is a non-rational tangle whose double branched cover
is a Seifert fiber space, $S$, over the disk with exceptional fibers of
order $|j_1|,n$. The Seifert fiber of $S$ is unique up to isotopy along
$\partial S$. By Claim~\ref{clm:j1=1}, the Heegaard genus $2$ manifold, 
$M_l$, that is the double branched
cover of $l$ is the union along an incompressible torus, $T$, of $N$
(the double branched cover of $\tau$) and $S$. As $N$ and $S$ are irreducible,
so is $M_l$. 

\medskip

{\bf Convention:} Below we will be considering the links in $S^3$ gotten
by filling the boundary sphere of tangles $\tau,\tau_1$ in 
Figures~\ref{fig:tau},
~\ref{fig:tau1} with rational tangles. Rational tangles are determined by
the slopes of their arcs on the bounding sphere with four marked points.
For an integer $n$, our convention will be that the rational tangles 
(or the corresponding slopes on
the sphere) are labeled: $n/1$ for two horizontal 
arcs with $n$ right-handed twists, $1/n$ for two vertical arcs with
$n$ left-handed twists. On the level of the double branched covers,
a slope on the tangle sphere determines a slope on the bounding torus
above, and a tangle filling results in a Dehn filling along that slope.

\medskip

First we show that $M_l$ cannot be a Seifert fiber space. 
Otherwise the separating torus $T$ would
have to be vertical in that Seifert fiber space and $N$ would admit
a Seifert fibration whose fiber agrees with the Seifert fiber of $S$.
But Dehn filling $N$ along this fiber gives $S^2 \times S^1$ (adding the
$1/0$-tangle to $\tau$ gives the unlink). This means that $N$ is the
circle bundle over the \mobius band. 
Thinking of $N$ as a Seifert fiber space over the disk with two exceptional
fibers, each of order $2$, we see that no Dehn filling of $N$ contains 
a separating essential torus. But filling $\tau$ with the $0/1$-tangle 
gives a link that is a union of two tangles $\tau_1, \tau_2$
whose double branched cover contains a separating incompressible
torus by Claim~\ref{clm:not2bridge} (and where the double branched cover of
$\tau_2$ is Seifert fibered over the disk with two exceptional fibers).

Thus $M_l$ has a non-trivial torus decomposition and  
\cite{kobayashi} describes such manifolds with Heegaard genus $2$.

\begin{claim}\label{clm:canonical}
Assume $M$ is closed, connected, irreducible, has Heegaard genus $2$, 
contains an essential torus, and is not a Seifert fiber space. 
If $M$ does not contain an 
essential non-separating torus, then the canonical torus decomposition of 
$M$ is as in $(i)-(iv)$ of \cite{kobayashi}. If $M$ contains an essential, 
non-separating torus, then the canonical torus decomposition of $M$ is as in 
$(v)$ of \cite{kobayashi}, with the exception that one of the decomposing 
tori is removed if at least one of $M_1$ or $M_2$ is a product $T^2 \times I$.

The canonical torus decomposition of $M$ has the property that any torus
is isotopic into one of the pieces of the decomposition.
\end{claim}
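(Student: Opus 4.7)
The plan is to invoke Kobayashi's classification \cite{kobayashi} of closed, orientable, irreducible $3$-manifolds of Heegaard genus $2$ that contain an essential torus, and then to identify which of his cases $(i)$--$(v)$ can occur under our additional hypothesis that $M$ is not a Seifert fiber space. Kobayashi's theorem gives a finite list of possible JSJ (canonical torus) decompositions, and the cases $(i)$--$(iv)$ are precisely those in which every JSJ torus is separating, while case $(v)$ is the only case exhibiting a non-separating essential torus.

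First I would assume $M$ contains no essential non-separating torus. Then every torus in the canonical torus decomposition $\mathcal{T}$ of $M$ is separating. Since $M$ is irreducible, contains an essential torus, and is not Seifert fibered, $\mathcal{T}$ is non-empty and the pieces $M \setminus \nbhd(\mathcal{T})$ are either Seifert fibered or simple. Matching this against Kobayashi's list eliminates case $(v)$ and leaves exactly $(i)$--$(iv)$.

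Next I would treat the case where $M$ does contain a non-separating essential torus $T^*$. Then the canonical decomposition must involve $T^*$ (or a torus isotopic to it), so only case $(v)$ of \cite{kobayashi} can apply. The one subtlety to address is that Kobayashi states case $(v)$ with a decomposition by two tori bounding pieces $M_1$ and $M_2$; when one of the $M_i$ is homeomorphic to $T^2 \times I$, its two boundary tori are parallel in $M$, so one of the two decomposing tori is redundant. The JSJ (canonical) decomposition, by definition, contains no parallel tori and no $T^2 \times I$ pieces sandwiched between two of its tori, so in this situation one must delete one of the two tori. This accounts for the exception stated.

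Finally, the assertion that every essential torus in $M$ is isotopic into one of the pieces of the decomposition is a standard consequence of the defining property of the JSJ decomposition: the collection $\mathcal{T}$ is a maximal (up to isotopy) disjoint family of essential tori such that each complementary piece is Seifert fibered or atoroidal, and any other essential torus in $M$ can be isotoped to be disjoint from $\mathcal{T}$ and hence lies in one of the pieces. The main point where one must be careful is the case $(v)$ exception above: after removing the redundant torus, the remaining pieces still satisfy this property because a $T^2 \times I$ adjacent to a piece $M_j$ can be absorbed into $M_j$ without destroying its Seifert fibered or atoroidal structure. Apart from this bookkeeping, the claim is immediate from \cite{kobayashi} and the general theory of the JSJ decomposition.
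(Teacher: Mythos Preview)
Your proposal is essentially correct and follows the same approach as the paper: invoke Kobayashi's Main Theorem, split into the separating versus non-separating torus cases, handle the $T^2\times I$ redundancy in case $(v)$, and appeal to the standard JSJ property for the final assertion. The only notable difference is one of emphasis: the paper points explicitly to the \emph{proof} of Kobayashi's Main Theorem (rather than its statement) to justify that the decomposition Kobayashi produces is in fact the canonical one, and for the last assertion the paper gives the specific reason (an essential torus not isotopic into a piece would force two adjacent Seifert pieces with matching fibers, contradicting minimality) rather than citing maximality abstractly.
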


\begin{proof}
This is the content of the proof of the Main Theorem of \cite{kobayashi}.
When $M$ contains an essential, non-separating torus, \cite{kobayashi} 
shows that
$M$ has a decomposition as in $(v)$. In this case, the identification
described between the components of $\partial M_1$ and $\partial M_2$ 
guarantees that the decomposition is the canonical (minimal) torus 
decomposition, unless either $M_1$ or $M_2$ is $T^2 \times I$. In that
case by amalgamating $M_1$ and $M_2$ we get a torus decomposition which
must be minimal since $M$ is not a Seifert fiber space.

If $M$ does not contain a non-separating torus, then the proof of the 
Main Theorem of \cite{kobayashi} shows that a canonical decomposition of
$M$ is of one of the forms $(i)-(iv)$. 

A canonical torus decomposition (see for example \cite{hatcher:3mflds}) 
has the property that any torus is
isotopic into a piece of the decomposition (else there would be contigous
Seifert pieces where the fibers agree -- contradicting the minimality of 
the decomposition).
\end{proof} 

We now argue that $M_l=N \cup_T S$ does not have a canonical torus decomposition
of the form $(i)-(v)$ of \cite{kobayashi} guaranteed by 
Claim~\ref{clm:canonical},
thereby showing that $M_l$ cannot have Heegaard genus $2$.
Lemmas 4.2 and 4.4 of \cite{kobayashi}
show that the exterior of a two-bridge knot or link is atoroidal. Lemma 5.2
of \cite{kobayashi} shows that the exterior of a one-bridge knot in a 
lens space of class $L_K$ of the Main Theorem of \cite{kobayashi} is atoroidal
unless it is Seifert fibered over the \mobius band with a single exceptional
fiber. In this case, the unique incompressible torus which is not boundary
parallel is a vertical torus which bounds the neighborhood of a vertical
Klein bottle. Finally, note that since $|j_1|,n>2$, $S$ is not the 
exterior of a two-bridge knot in $S^3$. 
\begin{itemize}
\item[(i),(ii).] Assume there is a canonical decomposition as in 
$(i)$ or $(ii)$ of \cite{kobayashi}.
$T$ is not isotopic to $\partial M_1$ since $N$ does not have tunnel number
one by Claim~\ref{clm:j1=1} and since $S$ is not the exterior of a 
two-bridge knot.
In a decomposition as in $(i)$, since $M_1$ is atoroidal, $T$ must be
an essential torus in $M_2$. By Lemma 5.2 of \cite{kobayashi},  
$M_2$ is Seifert fibered over a \mobius band with a single exceptional
fiber, and $T$ bounds the neighborhood of a vertical Klein bottle in $M_2$. 
But this contradicts that $S$ is atoroidal and Seifert fibered over the
disk (with a unique fibration). In a decomposition as in $(ii)$, $M_1$ must be
Seifert fibered over the \mobius band with at least one exceptional fiber, and
$T$ must be vertical in this fibration. As $S$ is atoroidal, it must be the
side of $T$ that lies in $M_1$. Then $N$ is the union of a circle bundle
over a once-punctured \mobius band and $M_2$ -- where the circle fiber 
is identified with the meridian of $M_2$. This implies that $N$ has 
tunnel number
one (one can find a tunnel for a two-bridge knot where two meridians 
represent jointly primitive curves in the genus $2$ handlebody). But
this contradicts Claim~\ref{clm:j1=1}. 

\item[(iii).] Assume $M_l$ has a canonical decomposition as in $(iii)$. 
If $T$ were 
isotopic to $\partial M_1$, then $M_2$  would have to be $N$ ($S$ is not the
exterior of a two-bridge knot in $S^3$). But $N$ is not tunnel number one. Thus 
$M_1$ must be Seifert fibered over the
disk with three exceptional fibers and $T$ must must be a vertical torus
in $M_1$. Thus one side of $T$ is the union of a Seifert fiber space
over the annulus with one exceptional fiber and $M_2$ -- where the
Seifert fiber is identified with the meridian of the two-bridge knot
exterior $M_2$. But such a manifold has tunnel number one (a meridian
is primitive in the tunnel one handlebody of a two-bridge knot exterior).
That is, both sides of $T$ have tunnel number one, contradicting 
Claim~\ref{clm:j1=1}.
\item[(iv).]  Assume that $M_l$ is decomposed as in $(iv)$ into the
three atoroidal manifolds $M_1,M_2,M_3$. Then
$T$ is isotopic to a component of $\partial M_3$. But each side of
$T$ has tunnel number one (e.g. $M_3 \cup M_2$ has tunnel number one,
since the Seifert fiber of $M_2$ is identified with the meridian of $M_3$
which is primitive in its tunnel one handlebody). This contradicts
Claim~\ref{clm:j1=1}.
\item[(v).] Assume $M_l$ is decomposed into $M_1$ and $M_2$ as in $(v)$.
The separating torus $T$ must be a 
vertical torus in $M_1$ where $M_1$ is Seifert fibered over an annulus
with two exceptional fibers. Thus both sides of $T$ have tunnel number
one, contradicting Claim~\ref{clm:j1=1} (note that the union of $M_2$
with the Seifert fiber space over the $3$-punctured sphere has tunnel
number one, since the Seifert fibers are identified with meridians
of the two-bridge link -- which are jointly primitive in its tunnel
one handlebody).
\end{itemize}


This shows that $M_l$ does not have Heegaard genus $2$, which contradicts 
our assumption that $M-\nbhd(L_1)$ has tunnel number one, once we verify the 
supporting claims.

\begin{figure}
\includegraphics[width=1in]{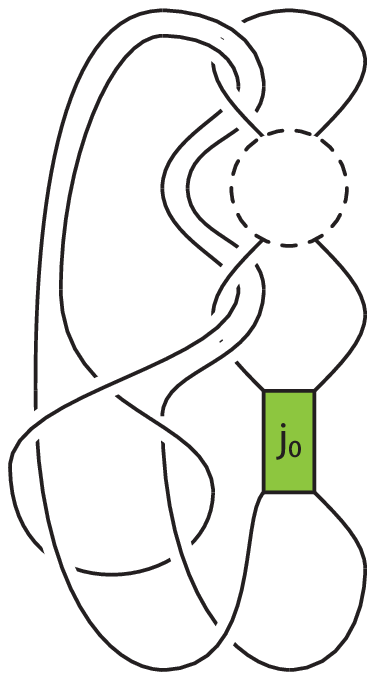}
\caption{}
\label{fig:tau}
\end{figure}

\begin{claim}\label{clm:j1=1}
Let $\tau$ be the tangle pictured in Figure~\ref{fig:tau}. The double branched cover, $N$, of $\tau$ is irreducible, $\bdry$-irreducible, and has tunnel number greater than one. 
\end{claim}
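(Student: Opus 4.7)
The plan is to exploit the standard double-branched-cover dictionary relating 2-string tangles in $B^3$ to compact orientable 3-manifolds with torus boundary: irreducibility of $N$ corresponds to $\tau$ being prime, $\partial$-irreducibility corresponds to $\tau$ being non-rational, and tunnel number $>1$ is obtained by placing $N$ in a family excluded from the classification of tunnel-number-one manifolds.

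For irreducibility and $\partial$-irreducibility I would read off the structure of $\tau$ from Figure~\ref{fig:tau}. In the figure, $\tau$ presents naturally as a Montesinos (or plumbed) tangle whose rational summands include the $j_0$-twist box together with fixed non-trivial pieces coming from the strong-inversion quotient. Primeness follows because any sphere meeting $\tau$ in two points would force a rational decomposition into two summands, but the exhibited Montesinos structure provides three or more non-trivial summands; non-rationality follows analogously, since a rational tangle admits no essential Montesinos decomposition with more than one non-trivial summand. Alternatively, one may verify these two properties by direct combinatorial checks on arcs of $\tau$ as in standard primeness arguments for tangles in the literature.

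The substantive step is tunnel number greater than one. Under the Montesinos interpretation, $N$ is Seifert fibered over the disk with (at least) three exceptional fibers whose orders can be read off from the rational summands of $\tau$ and depend on $j_0$. By the Morimoto--Sakuma classification (together with work of Boileau--Rost--Zieschang and Moriah--Rubinstein), the Seifert fiber spaces with non-empty boundary that admit a tunnel-number-one splitting form a restricted list, essentially requiring the base orbifold to be a disk with at most two exceptional fibers, or a M\"obius band with at most one exceptional fiber. For the values of $j_0$ relevant to Lemma~\ref{lem:verify}, the orders read off from $\tau$ place $N$ outside this list, forcing tunnel number $\geq 2$.

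The hardest part will be pinning down the exact Seifert invariants of $N$ (equivalently, identifying the rational tangle summands of $\tau$) and then verifying that $N$ avoids the short list of Seifert fiber spaces that happen to have tunnel number one. As a fallback I would argue via Dehn filling: a tunnel-number-one manifold with torus boundary admits only fillings of Heegaard genus at most two, so exhibiting a single rational tangle filling of $\tau$ whose resulting link has double branched cover of Heegaard genus at least three --- for instance a Montesinos link with four rational summands, or an obvious connect sum of two non-trivial two-bridge links --- yields a contradiction and completes the proof.
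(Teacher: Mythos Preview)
Your main line of attack fails at its starting assumption: $N$ is \emph{not} Seifert fibered, and $\tau$ is not a Montesinos tangle. The paper shows that the $1/0$--filling of $N$ is $S^2\times S^1$; a Seifert fiber space over the disk with three or more exceptional fibers has no Dehn filling equal to $S^2\times S^1$ (filling along the fiber gives a connected sum of lens spaces with finite fundamental groups, and every other filling gives a Seifert fiber space over $S^2$ with at least three exceptional fibers). More to the point, the paper's analysis of the $0/1$--filling exhibits an essential torus in that filled manifold separating it into $N_2$, a twisted $I$--bundle over the Klein bottle, and $N_1$, the exterior of the $(-2,3,2j_0+3)$--pretzel knot --- generically a hyperbolic knot. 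So the Morimoto--Sakuma/Boileau--Rost--Zieschang list of tunnel-number-one Seifert fiber spaces is simply not applicable to $N$.

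Your fallback idea --- find a rational filling of $\tau$ whose double branched cover has Heegaard genus at least three --- is exactly what the paper does, but the concrete candidates you name (a four-summand Montesinos link, or a connected sum of two nontrivial two-bridge links) do not arise here. The $0/1$--filling yields the toroidal manifold $N_1\cup_{T'} N_2$ described above, and ruling out genus two for it is not immediate: the paper first argues it is not Seifert fibered, and then runs through all five cases of Kobayashi's classification of toroidal genus-$2$ manifolds, using the supporting Claim~\ref{clm:not2bridge} (identifying $N_1$ as a pretzel knot exterior and showing it is not a two-bridge knot exterior for the relevant $j_0$) to eliminate each. Your proposal does not anticipate this structure or the need for Kobayashi's theorem. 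For irreducibility and $\bdry$--irreducibility your tangle-dictionary approach would work in principle, but note that the paper instead deduces these from the two fillings above rather than by a direct primeness argument on $\tau$.
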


\begin{figure}
\includegraphics[width=\textwidth]{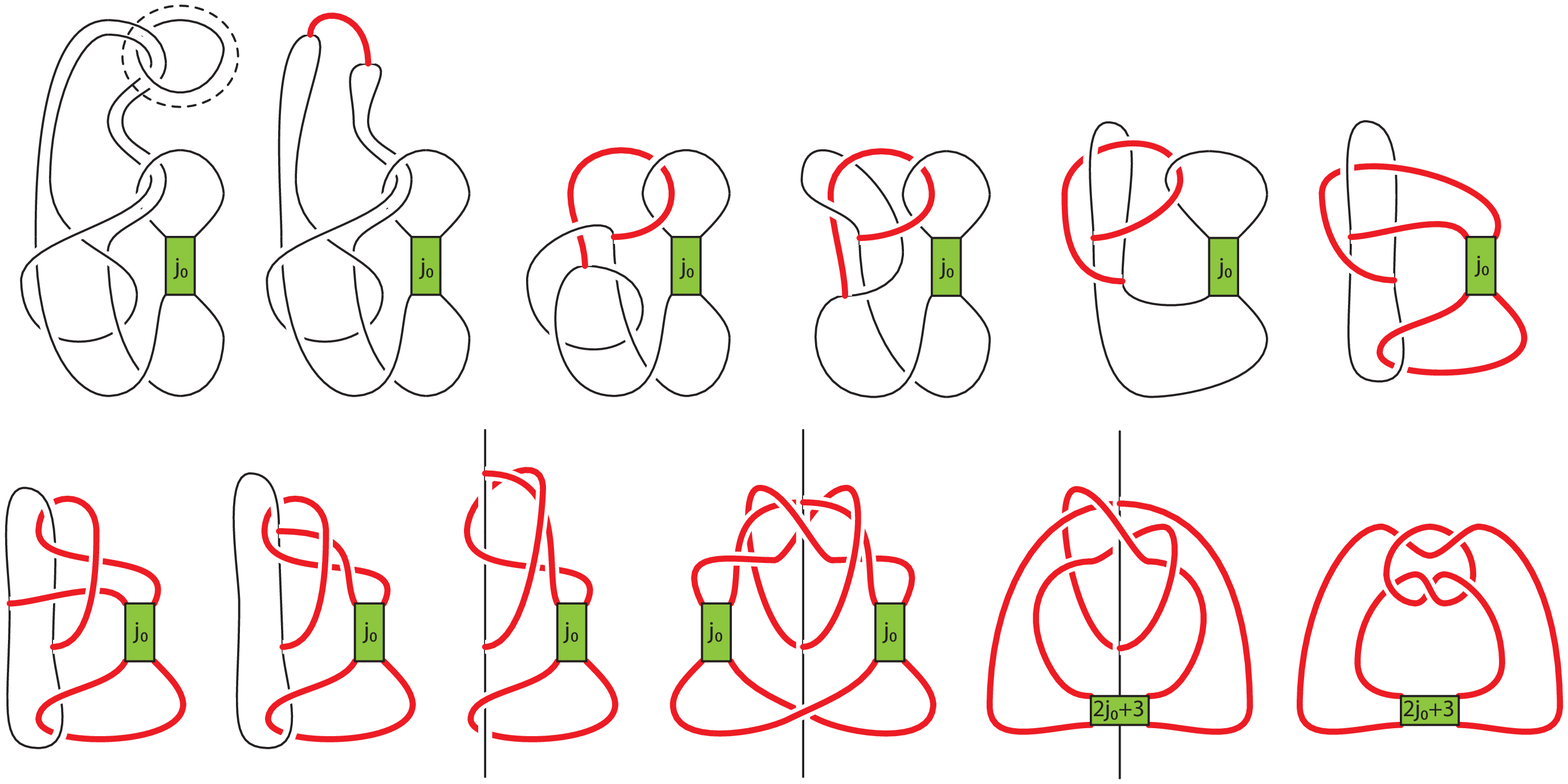}
\caption{}
\label{fig:extL1}
\end{figure}

\begin{figure}
\includegraphics[width=1in]{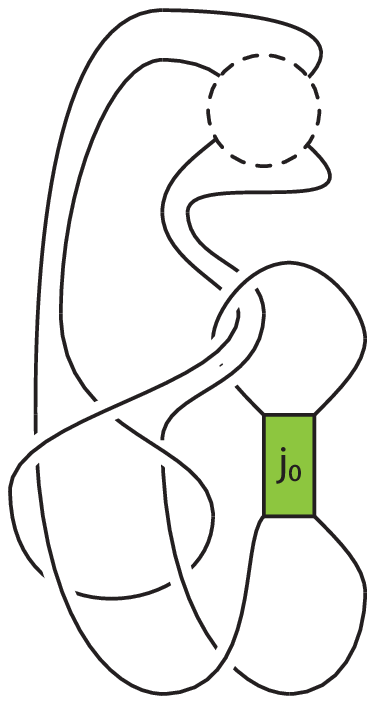}
\caption{}
\label{fig:tau1}
\end{figure}

\begin{proof}
Filling $\tau$ with the $1/0$-tangle gives the two component unlink. Thus the corresponding Dehn filling of $N$ is $S^2 \times S^1$.

The $0/1$-filling of $\tau$ gives the three component link, $l'$, in the first picture of Figure~\ref{fig:extL1}.  That picture shows this link decomposed along a Conway sphere into two tangles $\tau_1,\tau_2$, where $\tau_1$ is the tangle of Figure~\ref{fig:tau1} and where $\tau_2$ is the tangle gotten by taking a horizontal, unknotted circle along with 2 parallel, vertical arcs linking the circle. The double branched cover of $S^3$ along $l'$, $M_{l'}$, is the union along a torus, $T'$ of the double branched cover of $\tau_1$, $N_1$, and the double branched cover of $\tau_2$, $N_2$.  $N_2$ is a twisted $I$-bundle over a Klein bottle which has exactly two descriptions as a Seifert fiber space corresponding to
two different slopes on $\partial N_2$: one as Seifert-fibered over a disk with two exceptional fibers of order $2$, the other as a circle bundle over a \mobius band. By Claim~\ref{clm:not2bridge}, $N_1$ is irreducible, $\partial$-irreducible, and atoroidal. Thus $T'$ is an incompressible torus in $M_{l'}$, and either $M_{l'}$ is a toroidal Seifert fiber space or $N_1 \cup N_2$ is a non-trivial, canonical torus decomposition of $M_{l'}$. 

From the above discussion, the $1/0$-Dehn filling of $N$ is prime and the 
$0/1$-Dehn filling of $N$ is irreducible, thus $N$ is irreducible.  As the filling $M_{l'}$ contains an essential torus, $N$ is not a solid torus. Thus $N$ is irreducible and $\partial$-irreducible.

Assume for contradiction that $N$ has tunnel number one. Then the toroidal $M_{l'}$ has Heegaard genus two. 

First we show that $M_{l'}$ is not a Seifert fiber space. Suppose it is. Then $N_1$ is a Seifert fiber space whose fiber is a fiber of one of the two Seifert fibrations of $N_2$, which is either the $1/0$-slope (as a Seifert fiber space over the disk) or the $0/1$-slope (as the Seifert fiber space over the \mobius band). However, the $0/1$-filling of $\tau_1$ is the unknot, implying that $N_1$ is the exterior of a knot in $S^3$ whose meridian is this $0/1$-slope. But the Seifert fiber of a knot exterior in $S^3$ is never meridianal. Thus it must be that $N_1$ is a Seifert fibered knot exterior in $S^3$ whose fiber has slope $1/0$.  Thus $M_{l'}$ is a Seifert fibered space over the $2$-sphere with four exceptional fibers, two of which have orders greater than $2$ (by Claim~\ref{clm:not2bridge} $N_1$ is not the exterior of a two-bridge knot). However, such a Seifert fibered space cannot have Heegaard genus $2$.  By \cite{moriah-schultens}, the splitting would have to be horizontal or vertical. It cannot be vertical because there are too many exceptional fibers. It cannot be horizontal by \cite{sedgwick}. Thus $M_{l'}$ is not a Seifert fiber space.

Thus $N_1 \cup N_2$ is a non-trivial canonical decomposition of the genus $2$ manifold $M_{l'}$. In particular, any torus in $M_{l'}$ is isotopic to $T'$. The main theorem of \cite{kobayashi} describes the possible canonical decompositions of $M_{l'}$ (see 
Claim~\ref{clm:canonical}). We rule out each of these possibilities.  ($M_1$, $M_2$, $M_3$ follow the notation of \cite{kobayashi}.)

\begin{itemize}
\item[(i).] We rule out conclusion $(i)$. By Claim~\ref{clm:not2bridge}, $N_1$ is not a Seifert fibered space unless $j_0=0$ (since $|j_0| \neq 1,2$ by hypothesis).  In that case the slope of the regular fiber of $N_1$ on $T'$ is $-1/2$, but this is not the slope of a lens space filling of $N_2$.
 Thus it must be that $M_1$ is $N_2$ and $M_2$ is $N_1$. That is, $N_1$ is the exterior of a one-bridge knot in a lens space whose meridian is identified along $T'$ with the Seifert fiber, with slope $1/0$, of $N_2$ coming from its fibration over the disk. Filling $N_1$ along this meridian is a lens space. However this filling is the double branched cover of the two component link gotten by filling $\tau_1$ with the $1/0$-tangle. By Hodgson-Rubinstein \cite{hr}, this link must be a two-bridge link. However, this is impossible since, for $j_0 \neq 1,2$, one component of this link is knotted.

\item[(ii).]  Conclusion $(ii)$ does not hold since neither $N_1$ nor $N_2$ is a two-bridge knot exterior.  (Claim~\ref{clm:not2bridge} for $N_1$ and the fact that $N_2$ contains a Klein bottle.)

\item[(iii).] Conclusion $(iii)$ does not hold as neither $N_1$ nor $N_2$ is a two-bridge knot exterior.

\item[(iv).] Conclusion $(iv)$ does not hold as the torus decomposition of $M_{l'}$ has only two pieces.

\item[(v).] Conclusion $(v)$ does not hold as $M_{l'}$ does not contain a non-separating torus.
\end{itemize}

This finishes the proof of Claim~\ref{clm:j1=1}.
\end{proof}

\begin{claim}\label{clm:not2bridge}
Let $\tau_1$ be the tangle in Figure~\ref{fig:tau1}. Let $N_1$ be the double cover of the tangle ball branched over $\tau_1$. Then $N_1$ is the exterior of the $(-2,3,2j_0+3)$-pretzel knot in $S^3$ pictured in the final picture of Figure~\ref{fig:extL1}.  $N_1$ is irreducible, $\partial$-irreducible, and atoroidal for every $j_0$.  $N_1$ is Seifert fibered only when $j_0=1,0,-1$, and when $j_0=0$ the Seifert fiber on $\bdry N_1$ has slope $-1/2$. $N_1$ is the exterior of a  two-bridge knot only when $j_0=-1,-2$.
\end{claim}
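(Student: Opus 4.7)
First, I would identify $N_1$ as the exterior of the pretzel knot $K := P(-2,3,2j_0+3)$ pictured in the last frame of Figure~\ref{fig:extL1}. Tracking an explicit isotopy implicit between Figures~\ref{fig:tau1} and~\ref{fig:extL1}, one sees that removing a ball neighborhood of an unknotted bridge arc $\alpha$ of $K$ from $S^3$ produces a tangle ball homeomorphic to $(B,\tau_1)$, with $(S^3\setminus \Int B,\alpha)$ being the trivial $0/1$-tangle. Since the double branched cover of a $3$-ball over a trivial arc is a solid torus whose core is the preimage of the arc, the double branched cover of $S^3$ over $K$ equals $N_1$ union a solid torus, with that core representing $K$. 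This identifies $N_1 = S^3 - \nbhd(K)$ and identifies the meridian of $K$ with the $0/1$-slope on $\partial N_1$, consistent with the statement in the proof of Claim~\ref{clm:j1=1} that the $0/1$-filling of $\tau_1$ is the unknot.

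Second, because $2j_0+3$ is odd for every $j_0 \in \Z$, $K$ is a genuine knot. As a Montesinos knot with three nontrivial rational summands of slopes $-1/2$, $1/3$, $1/(2j_0+3)$, Oertel's classification of essential surfaces in Montesinos knot exteriors shows that the exterior of $K$ contains no essential sphere, disk, or closed torus. Hence $N_1$ is irreducible, $\partial$-irreducible, and atoroidal for every $j_0$.

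Third, since a non-trivial knot in $S^3$ has Seifert fibered exterior iff it is a torus knot, I would invoke the classical fact that $P(-2,3,n)$ with $n$ odd is a torus knot exactly for $n \in \{1,3,5\}$ (verifiable by matching Alexander polynomials or via the Montesinos classification of torus knots), giving $j_0 \in \{-1,0,1\}$. For $j_0 = -2$, the pretzel $P(-2,3,-1)$ is two-bridge but its two-bridge fraction is not of torus type $q/1$ (it is $\pm 7/6$ up to convention), so the exterior is not Seifert fibered. For $j_0 = 0$, $N_1$ is a torus knot exterior, and one computes the regular Seifert fiber slope on $\partial N_1$ by transporting the framing $R$ induces on $L_1$ through the double branched cover identification of step one, obtaining $-1/2$. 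For the two-bridge clause: $P(-2,3,\pm1)$ is two-bridge via a flype that unwinds the $\pm 1$-tangle, while $P(-2,3,n)$ with $|n| \ge 3$ has bridge number exactly $3$, e.g.\ by the Boileau--Zimmermann bridge-number formula for Montesinos knots, so $N_1$ is a two-bridge knot exterior iff $j_0 \in \{-1,-2\}$.

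I expect the main obstacles to be the explicit isotopy identification in step one, together with the framing bookkeeping yielding slope $-1/2$ in the $j_0=0$ case of step three; once these are in hand, the remaining structural assertions reduce to standard facts about Montesinos knots.
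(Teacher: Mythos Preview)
Your overall strategy matches the paper's: identify $N_1$ as the exterior of $P(-2,3,2j_0+3)$ via the figures, then quote standard structural results about pretzel/Montesinos knots. The paper cites Kim--Lee for atoroidality and for the torus-knot classification, and Kawauchi for the two-bridge cases; your use of Oertel and Boileau--Zimmermann accomplishes the same things. One small technical point: Oertel's theorem requires at least three nontrivial rational tangles, so it does not directly cover $j_0=-1,-2$ where $2j_0+3=\pm1$; you would need to note separately that those two-bridge exteriors are atoroidal.

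The substantive gap is your computation of the Seifert fiber slope when $j_0=0$. You propose to obtain $-1/2$ by ``transporting the framing $R$ induces on $L_1$ through the double branched cover identification of step one,'' but this does not make sense: $N_1$ is the double branched cover of the tangle ball for $\tau_1$, and the slopes on $\bdry N_1$ are parametrized by rational-tangle fillings of $\tau_1$, not by any framing of $L_1$ in $M$. Your step one only pins down the meridian of $K$ as the $0/1$-slope; to convert the intrinsic cabling slope of the $(3,4)$-torus knot into tangle coordinates you would also need to track the Seifert longitude through the identification, which you have not done. The paper sidesteps this bookkeeping entirely: for a torus knot exterior, the unique reducible Dehn filling is along the regular fiber slope, so one simply finds the rational-tangle filling of $\tau_1$ that produces a composite link (the paper identifies it as a connected sum of two $2$-bridge torus knots), and that tangle's slope is the Seifert fiber slope $-1/2$. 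This is both cleaner and avoids the framing translation you flagged as an obstacle.
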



\begin{proof}
Figure~\ref{fig:extL1} is a sequence of pictures identifying $N_1$ as the exterior of the non-trivial $(-2,3,2j_0+3)$-pretzel knot.  Since pretzel knots are not satellite knots \cite{kimlee}, $N_1$ is irreducible, $\bdry$-irreducible, and atoroidal for all $j_0$.  These $(-2,3,2j_0+3)$-pretzel knots are the $(3,5)$, $(3,4)$, and $(2,5)$-torus knots when $j_0$ is $1,0,-1$ respectively and hyperbolic otherwise \cite{kimlee}. Moreover, they are two-bridge knots only when $j_0=-1,-2$, e.g.\ \cite{kawauchi}.  When $j_0=0$, the slope of the Seifert fiber on $\bdry N_1$ may be determined as the slope of the tangle that fills $\tau_1$ of Figure~\ref{fig:tau1} to produce the connected sum of the $(3,1)$ and $(-4,1)$ torus knots.
\end{proof}

{\bf (4) Filling $M-\nbhd(L_1)$ along the slope $\bdry \hatR$ is atoroidal.}\\
The $R$--framing of $L_1 \subset S^3$ is the $\hatR$--framing of $L_1 \subset M$ and corresponds to the page framing of the arc $L_1$ at the end of Figure~\ref{fig:tangleisotopy}.   Therefore the $\hatR$--framed surgery on $L_1 \subset M$ is the double branched cover of the link in the second picture of Figure~\ref{fig:tangleatoroidal}.  The subsequent pictures show isotopies of this link to the split link comprised of the unknot and the $(2,j_1)$--torus link.  Thus the $\hatR$--framed surgery on $L_1 \subset M$ is homeomorphic to $S^1 \times S^2 \# L(j_1,1)$ which is atoroidal.

\begin{figure}
\includegraphics[width=5in]{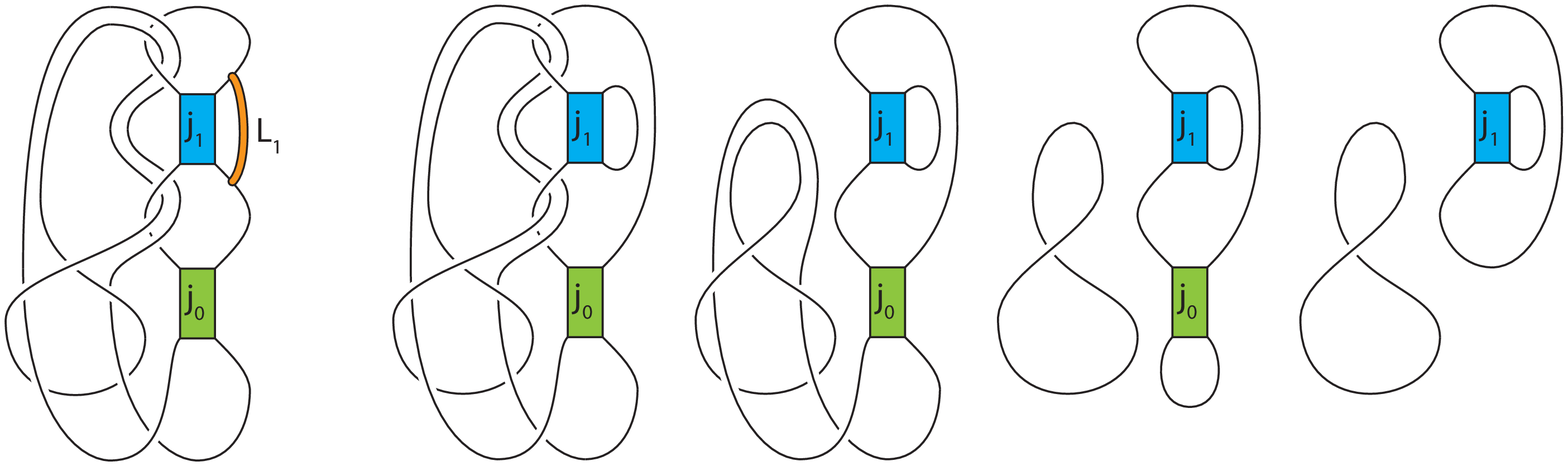}
\caption{}
\label{fig:tangleatoroidal}
\end{figure}


{\bf (3) $L_1$ is not the cable of a tunnel number $1$ knot in $M$, where the cabling annulus has the same slope on $L_1$ as $\bdry \hatR$.}
Suppose $L_1$ is cabled as described about the tunnel number one knot $J$ in $M$. Let $A_1$ be the cabling annulus, properly embedded in $M-\nbhd(L_1)$, whose 
two boundary components have the same slope on $L_1$ as $\bdry \hatR$ (the
$0$-slope).

First assume $|j_1|=1$.    
As $L_1$ is cabled, the $0$--surgery in $M$ along $L_1$ will produce a manifold with a lens space (of positive, finite order in first homology) summand. But we saw above that $0$--surgery on $L_1$ produces $S^1 \times S^2 \# L(|j_1|,1)$ which is $S^1 \times S^2$.

Thus we may assume that $|j_1| > 1$.  As $|j_0|,|j_1| \geq \nu$, 
$M$ is hyperbolic.  Because $L_1$ is isotopic to a meridian of $J_1$ (see Figure~\ref{fig:simplifyingisotopy}), $L_1$ is also a $(j_1,1)$--cable of $J_1'$, the dual to the $-1/j_1$--surgery on $J_1$. Furthermore, the slope of this cabling annulus $A_2$ on $L_1$ is that of $\bdry \hatR$. 
Now the exterior of $J_1'$ in $M$ is $N$, the double branched cover of $\tau$
in Claim~\ref{clm:j1=1}. By that Claim, $J_1'$ does not have tunnel number one. 
Thus $J$ is not isotopic to $J_1'$ in $M$.

We may isotop $A_1,A_2$ in $M-\nbhd(L_1)$ so that they intersect in parallel
essential curves in the interiors of $A_1$ and $A_2$. Let $V_i$ be the cabling solid torus for $A_i$ in the exterior of $L_1$. Note that any incompressible annulus in $V_1$ is $\bdry$--parallel. Hence for $\{i,j\}=\{1,2\}$, we may assume that $\partial A_i$ lies outside of $V_j$, and that each component of $A_i \cap V_j$ is parallel to $\bdry \nbhd(L_1) \cap V_j$. First assume that $A_1 \cap A_2$ is non-empty. Then $V_j - \nbhd(A_i)$ consists of solid tori, exactly one of which, $\calC_j$, has the property that $A_i \cap \calC_j$ is not longitudinal. The core of $\calC_j$ is isotopic to the core of $V_j$. If $\calC_j$ lies in $V_i$ then $\calC_j=\calC_i$ and $J$ would be isotopic to $J_1'$. So it must be that $\calC_j$ lies outside of $V_i$. As $\calC_j$ meets $V_i$ in a subannulus of $A_i$, $\calT = \calC_j \cup V_i$ is a Seifert fiber space over the disk
with two exceptional fibers. Note that $L_1$ is isotopic to a regular fiber of $\calT$. If the boundary of $\calT$ is compressible in $M$, then either it, and hence $L_1$, is contained in a ball, or $M$ is either a small Seifert fiber space or the connected sum of lens spaces. If its boundary is incompressible, then $M$ is toroidal. As $M$ is hyperbolic, it must be that $L_1$ lies in a ball in $M$. Then $M$ is a connect summand of the $0$--surgery on $L_1$. But $M$ is hyperbolic, and $0$--surgery on $L_1$ produces $S^2 \times S^1 \# L(|j_1|,1)$.

Thus it must be that $A_1, A_2$ are disjoint. Then $V_1 \cup \nbhd(L_1) \cup V_2$ is a Seifert fiber space over the disk with two exceptional fibers, of which $L_1$ is a regular fiber. As above, this contradicts the hyperbolicity of $M$.
\end{proof}

Teragaito also describes the link $\calL'_{1,1}$ in \cite{teragaito} and states that $0$--surgery ($+4$--surgery with respect to the Seifert framing) on each of the knots $K'^n_{1,1}$ yields the same Seifert fibered manifold of type $S^2(3,4,8)$.  We observe this as follows:  Continuing from Figure~\ref{fig:tangleisotopy}, Figure~\ref{fig:teragaito2nd} shows that setting $j_0=j_1=1$ produces a link isotopic to the Montesinos link $m(0;-1/3, 5/8,-1/4)$.  The double branched cover of this link is the manifold $M_{1,1}$ that results from the $0$--surgery on $K'^0_{1,1}$ and is a Seifert fibered manifold of the type claimed.  

\begin{thm}\label{thm:secondfamily}
For the second Teragaito family, where $j_0=1$ and $j_1=1$,  $\{\bn_2(K^n)\}$ is
finite.
\end{thm}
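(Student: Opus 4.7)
The plan is to apply Corollary~\ref{cor:iff} with $g=2$ and verify that either its conclusion~(1) or its conclusion~(2) holds; either one implies that $\{\bn_2(K^n)\}$ is finite. As observed just before the theorem, $M=M_{1,1}$ is the double branched cover of the Montesinos link $m(0;-1/3,5/8,-1/4)$, and hence is the Seifert fibered space $S^2(3,4,8)$. The strategy is to exploit this Seifert fibered structure to exhibit the required Heegaard surface or essential annulus.

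First, I would trace the link components $L_1$, $L_2$, $K$ and the pair of pants $R$ through the isotopies of Figure~\ref{fig:tangleisotopy} specialized to $j_0=j_1=1$, and then through the identification with the Montesinos link in Figure~\ref{fig:teragaito2nd}. Using the equivariant picture of the strong inversion and the Montesinos trick along $K'$, the aim is to identify $L_1$ and $L_2$ as a pair of regular fibers of the Seifert fibration of $M$, and to identify $\hatR$ (which is obtained from $R$ by capping off its $K'$-boundary with the core disk of the $0$-surgery) as a vertical annulus between these two regular fibers.

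Second, assuming this identification succeeds, I would show $\hatR$ lies on a genus $2$ Heegaard surface of $M$. A vertical genus $2$ Heegaard surface of $S^2(3,4,8)$ may be built as the preimage of a theta graph on the base $S^2$ whose two complementary disks contain the three exceptional points in a $1$--$2$ partition; any vertical annulus between two regular fibers lying in a common complementary region can be isotoped onto this preimage. With $\hatR$ on a genus~$2$ Heegaard surface, conclusion~(1) of Corollary~\ref{cor:iff} holds and the finiteness of $\{\bn_2(K^n)\}$ follows.

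The main obstacle is the first step: rigorously identifying $L_1$ and $L_2$ with regular fibers and $\hatR$ with a vertical annulus. This is a careful tangle-tracing exercise, and if the identification reveals that $\bdry R$ realizes the regular fiber slope but $\hatR$ itself is not quite vertical on the nose, I would instead verify conclusion~(2) of Corollary~\ref{cor:iff} directly: the vertical annulus joining the fibers $L_1$ and $L_2$, pushed off $K$ using the Seifert fibration, is an essential annulus in $X=M-\nbhd(K\cup L_1\cup L_2)$ whose boundary has slope $\bdry R$ on $T_1\cup T_2$. In either case the same finiteness conclusion follows from Corollary~\ref{cor:iff}.
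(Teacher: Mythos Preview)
Your overall strategy---verify conclusion~(1) of Corollary~\ref{cor:iff} by placing $\hatR$ on a genus~$2$ Heegaard surface---is the same as the paper's, but the route you propose is both harder and rests on an unverified structural claim.  The paper does not attempt to identify $L_1$, $L_2$ as regular fibers or $\hatR$ as a vertical annulus.  Instead it traces only the arc corresponding to $L_1$ through the isotopies of Figure~\ref{fig:teragaito2nd} and then exhibits (in a further figure) a decomposition of the resulting Montesinos link into two $3$--strand trivial tangles in which that arc is a \emph{core arc} of one of the tangles.  Hence $L_1$ lifts to a core curve of a genus~$2$ handlebody in a genus~$2$ splitting of $M$.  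The key observation is then purely general: a core curve may be isotoped onto the Heegaard surface with \emph{any} prescribed framing, in particular the $\hatR$--framing, and an annulus can always be retracted to a collar of one boundary component.  Thus $\hatR$ is isotopic into $\hatF$ without ever needing to locate $L_2$, $\hatR$, or the Seifert structure explicitly.

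The gap in your plan is that you give no evidence that $L_1$ and $L_2$ are regular fibers, nor that $\bdry \hatR$ realizes the fiber slope; these are genuine claims, not just bookkeeping.  (Compare: in the first Teragaito family $M_{0,1}$ is also a small Seifert fibered space, yet there Lemma~\ref{lem:verify} shows $L_1$ has tunnel number greater than one, so it is certainly not a regular fiber.  So nothing about the general set-up forces $L_1$ to be a fiber.)  Your fallback to conclusion~(2) has the same problem---it still requires the fiber slope to equal the $\hatR$--slope---and the phrase ``pushed off $K$ using the Seifert fibration'' presumes $K$ is transverse to or compatible with the fibration, which is not established.  The paper's core-curve argument sidesteps all of this: it needs only the single tangle-tracing of $L_1$ and the elementary fact about cores and framings.
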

\begin{proof}
We set $j_0=1$ and $j_1=1$. Figure~\ref{fig:teragaito2nd} also keeps track of the orange arc that lifts to $L_1$.  The final link of this figure is decomposed in Figure~\ref{fig:teragaito2ndTN1} into two $3$--strand trivial tangles, one of which contains the orange arc as a ``core arc''.  That is, in the genus $2$ handlebody that is the double branched cover of this $3$--strand trivial tangle, the orange arc lifts to a core.  Consequently, this implies $L_1$ is a core curve of a genus $2$ splitting of $M$.  Hence the tunnel number of $L_1 \subset M$ is one.  Moreover, if $\hat{F}$ is a Heegaard surface of this splitting, then $L_1$ may be isotoped into $\hat{F}$ with any desired framing.  Therefore there is an isotopy of the annulus $\hat{R}$ into $\hat{F}$. As argued at the end
of the proof of Corollary~\ref{cor:iff}, there is an upper bound for $\bn_{\hatF}(K^n)$ and hence for $\bn_2(K^n)$ as well.
\end{proof}

\begin{figure}
\centering\
\includegraphics[width=\textwidth]{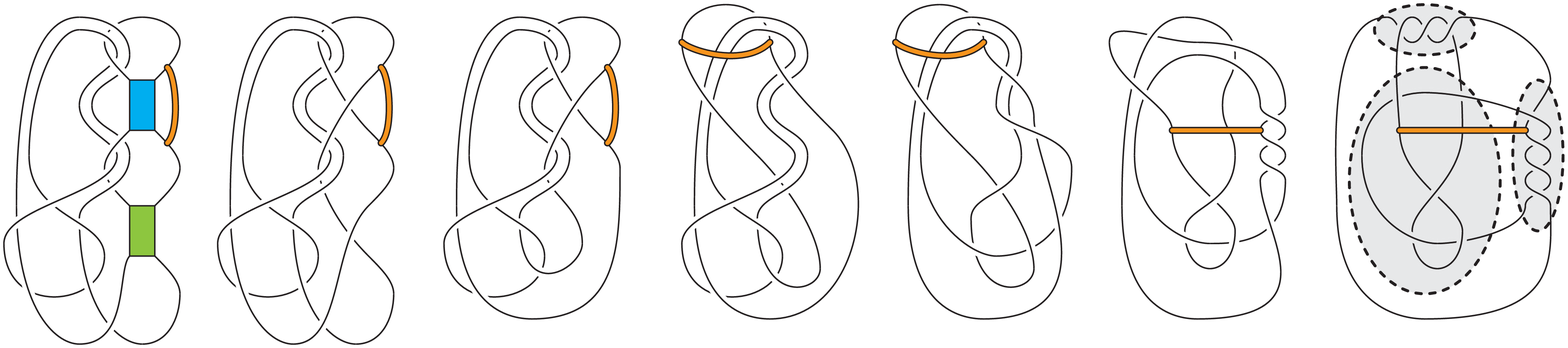}
\caption{}
\label{fig:teragaito2nd}
\end{figure}

\begin{figure}
\centering
\includegraphics[width=\textwidth]{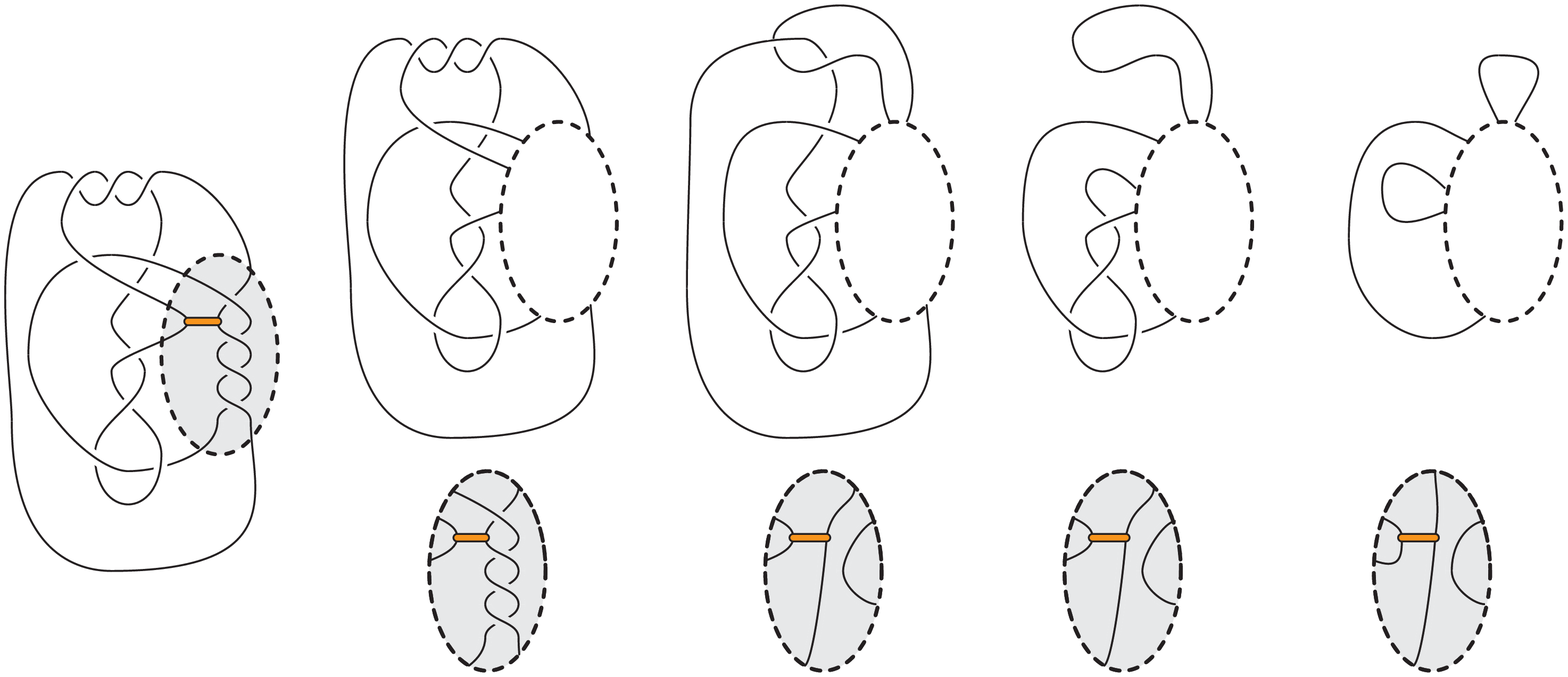}
\caption{}
\label{fig:teragaito2ndTN1}
\end{figure}

\begin{defn}
A non-orientable, closed surface with Euler characteristic $-1$ is called
a {\em Dyck's surface}.
\end{defn}

\begin{lemma}\label{lem:noDycks}
For infinitely many pairs $(j_0,j_1)$ the manifold $M_{j_0,j_1}$ 
is hyperbolic and contains no Dyck's surface.
\end{lemma}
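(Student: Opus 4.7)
The plan is to combine Thurston's hyperbolic Dehn surgery theorem with a finiteness-of-boundary-slopes argument in the style of Hatcher, applied to the parent cusped hyperbolic manifold $W_0$ of Definition~\ref{def:kappa}. Since $W_0$ has two torus cusps (from $\nbhd(J_0)$ and $\nbhd(J_1)$) and $M_{j_0, j_1}$ is its $(-1/j_0, -1/j_1)$ Dehn filling, Thurston's theorem already provides a cofinite set of pairs $(j_0, j_1)$ for which $M_{j_0, j_1}$ is hyperbolic. I would show that only finitely many further pairs can yield $M_{j_0, j_1}$ containing a Dyck's surface.

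Assume $F \subset M_{j_0, j_1}$ is a Dyck's surface, where $M_{j_0, j_1}$ is hyperbolic. First I would reduce to the case that $F$ is incompressible. A hyperbolic $3$-manifold is irreducible, aspherical, and has torsion-free $\pi_1$, so no embedded $\mathbb{RP}^2$ exists and every embedded sphere bounds a ball. An Euler-characteristic count shows that the only compression of a Dyck's surface in such $M_{j_0, j_1}$ is a separating one yielding a sphere (bounding a ball, which may be discarded) together with another Dyck's surface; iterating terminates in an incompressible Dyck's surface.

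Next, I would isotope the incompressible $F$ to meet the cores of the two filling solid tori transversely and minimally. If $F$ is disjoint from both cores then $F \subset W_0$, giving a condition independent of $(j_0, j_1)$; I would verify (using \texttt{SnapPy} or a homological argument on $H_2(W_0; \Z/2)$) that $W_0$ admits no closed Dyck's surface. Otherwise $F \cap W_0$ is a properly embedded surface in $W_0$ whose boundary consists of curves of slope $-1/j_0$ on the cusp from $J_0$ and $-1/j_1$ on the cusp from $J_1$. The incompressibility of $F$ in $M_{j_0, j_1}$ carries over to incompressibility of $F \cap W_0$ in $W_0$, and after a maximal sequence of $\bdry$-compressions I obtain an essential surface in $W_0$ whose boundary slopes remain controlled by $-1/j_0$ and $-1/j_1$.

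By Hatcher's finiteness theorem for boundary slopes of essential surfaces in a compact $3$-manifold with torus boundary (applied cusp-by-cusp on $W_0$), only finitely many slopes occur as boundary slopes of essential surfaces on each cusp. Hence $-1/j_0$ and $-1/j_1$ are each forced to lie in a finite set, leaving at most finitely many pairs $(j_0, j_1)$ that yield a Dyck's surface. Combined with Thurston's cofinite set, infinitely many pairs $(j_0, j_1)$ produce $M_{j_0, j_1}$ both hyperbolic and containing no Dyck's surface. The main obstacle will be the $W_0$-inspection --- verifying directly that $W_0$ admits no closed embedded Dyck's surface --- together with carefully tracking the bounded change in boundary slope under each $\bdry$-compression to keep the reduction to Hatcher's theorem valid.
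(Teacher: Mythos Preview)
Your strategy diverges sharply from the paper's, and it contains a genuine gap at the step where you invoke Hatcher's finiteness theorem.

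Hatcher's theorem, as stated and proved, applies to a compact orientable irreducible $3$--manifold whose boundary is a \emph{single} torus. It does not extend ``cusp-by-cusp'' to manifolds with several torus boundary components. The trivial example is $T^2\times I$, where every slope bounds an essential vertical annulus; but the failure persists in the hyperbolic case as well. For a multi-cusped hyperbolic $M$ the image of $\partial\colon H_2(M,\partial M)\to H_1(\partial M)$ has half rank, and when its projection to some $H_1(T_i)\cong\Z^2$ has rank $2$ (as happens, for instance, in the magic manifold), Thurston-norm-minimizing representatives of the various classes are incompressible and $\partial$-incompressible and realize infinitely many boundary slopes on $T_i$. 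So there is no general theorem you can appeal to for the $2$--cusped $W_0$, and you have not argued anything specific to $W_0$ that would substitute for it. The $\partial$-compression issue you flag is a second, independent difficulty: a $\partial$-compression performs a band move on $\partial(F\cap W_0)$ and can change the slope, and you have no control on how many such moves occur (nor on $\chi(F\cap W_0)=-1-|F\cap(\text{cores})|$, which is unbounded, so a ``bounded-$\chi$'' finiteness statement would not rescue the argument either).

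The paper proceeds entirely differently. Rather than invoking abstract finiteness, it fixes a concrete auxiliary surface in $W_0$ (a $2$--punctured torus $Q'$ with meridianal boundary on $T_1$) and runs the same intersection-graph combinatorics that drive the main results (Claims~\ref{claim:paralleledges1} and~\ref{claim:paralleledges2}). Playing a putative Dyck's surface in $W_0(j_1)$ off against $Q'$ forces, for $|j_1|>108$, a properly embedded M\"obius band in the hyperbolic $W_0$, a contradiction; this shows $W_0(j_1)$ itself contains no Dyck's surface. A second, analogous graph argument then compares two hypothetical Dyck's surfaces in $M_{j_0,j_1}$ and $M_{j_0',j_1}$ and yields $|j_0-j_0'|\le 324$. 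Thus for each sufficiently large fixed $j_1$ only finitely many $j_0$ are excluded, and one obtains infinitely many hyperbolic $M_{j_0,j_1}$ with no Dyck's surface. The point is that the paper replaces a nonexistent general finiteness theorem with an explicit two-surface intersection argument tailored to $W_0$.
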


\begin{proof}
Recall that $M_{j_0,j_1}$ is obtained by Dehn surgery on the link 
$K' \cup J_0 \cup J_1$ in $S^3$: $0,-1/j_0,-1/j_1$-surgery on 
$K',J_0,J_1$ respectively. Let $X$ be exterior of $K' \cup J_0 \cup J_1$
in $S^3$ and $T,T_0,T_1$ be the components of $\partial X$ corresponding
to $K',J_0,J_1$ respectively. Recall that $W_0$ is the $0$-filling of 
$X$ along $T$ and that the interiors of both $X$ and $W_0$ are hyperbolic of finite volume.

First we note that $M_{0,1}$ contains no Dyck's surface, Klein bottle, or
projective plane by the appendix in \cite{bgl:og2hsfs} ($M_{0,1}$ is the
Seifert fiber space $S^2(-1/2,1/6,2/7)$). As $M_{0,1}$ is a filling
of $W_0$, this implies that $W_{0}$ contains no Dyck's surface.  

\begin{claim}\label{clm:dycksj1}
For $|j_1| > 108$, the manifold $W_0(j_1)$ obtained by 
$-1/j_1$-filling the 
$T_1$ boundary of $W_0$ contains no Dyck's surface. 
\end{claim}
\begin{proof}
Assume $|j_1| > 108$.
The obvious disk that $K'$ bounds in the final picture of 
Figure~\ref{fig:simplifyingisotopy} gives rise in $W_0$ 
to a $4$-punctured disk with punctures on $T_1$. By tubing an appropriate
pair of these punctures we get a $2$-punctured torus, $Q'$, properly embedded
in $W_0$ whose two boundary components are coherently oriented curves
representing meridians (slope $1/0$) of $J_1$. Assume for contradiction
there is Dyck's surface $S$ in $W_0(j_1)$. Let $J_1'$ be the core of the
attached solid torus at $T_1$ in $W_0(j_1)$. Isotop $S$ in $W_0(j_1)$
to intersect $J_1'$ minimally. Let $S'=W_0 \cap S$. As $W_0$ contains no
Dyck's surface, $\partial S'$ is
a non-empty collection of curves of slope $-1/j_0$ on $T_1$.  
Isotop $\partial Q', \partial S'$ to intersect minimally in $W_0$. 
Then no arc of $Q' \cap S'$ is boundary parallel in either $Q'$ or $S'$
(note that a boundary parallel arc in $Q'$ is orientation-preserving so the
Parity Rule still applies). 
Let $A$ be the punctured genus $2$ surface coming from a regular 
neighborhood of $S'$ in $W_0(j_1)$. Then no arc of $Q' \cap A$ 
is boundary parallel in $Q'$ or in $A$. Consider the graphs of intersection $G_A,G_{Q'}$ coming from the arcs of $Q' \cap A$. Then $G_A,G_{Q'}$ have no monogons. 

As the distance between the slopes of the boundaries of $Q'$ and $A$ on $T_1$
is $|j_1|$, $G_{Q'}$ has $|j_1||\partial A|$ edges. Let $\widetilde{G_{Q'}}$ be
the reduced graph of $G_{Q'}$ gotten by amalgamating parallel edges of 
$G_{Q'}$. The proof of Claim~\ref{claim:paralleledges1} shows that 
$\widetilde{G_{Q'}}$ has
at most $6$ edges. Thus $G_{Q'}$ 
must have a collection, $\mathcal{E}$, of at least $|j_1||\partial A|/6$ parallel 
edges. Let $G_A(\mathcal{E})$ be the subgraph of $G_A$ corresponding to these
edges (along with all vertices of $G_A$). Then the valence of each vertex of 
$G_A$ is at least $|j_1|/6>18$. By Claim~\ref{claim:paralleledges2}, two of the edges of
$G_A(\mathcal{E})$ are parallel on $G_A$. As in the proof of 
Lemma~\ref{lem:largengivesnonhyp}, the
union of the disks bounded in $Q',A$ by an innermost pair of edges, gives 
rise to a \mobius band properly embedded in $W_0$. But this contradicts
the hyperbolicity of $W_0$.
\end{proof} 

\begin{claim}\label{clm:dycksj0}
Assume $W_0(j_1)$ is hyperbolic. If $|j_1| > 108$ and $M_{j_0,j_1}$ and $M_{j_0',j_1}$ both contain
Dyck's surfaces, then $|j_0-j_0'| \leq 324$.
\end{claim}

\begin{proof}
Assume for contradiction that $|j_1| > 108$, $|j_0-j_0'| > 324$, and 
that $S \subset M_{j_0,j_1}$ and $F \subset M_{j_0',j_1}$ are
embedded Dyck's surfaces. Isotope $S,F$ so that they
intersect the core of the solid torus attached to $T_0$ in $M_{j_0,j_1},
M_{j_0',j_1}$ minimally. Let $S'=S \cap W_0(j_1), F'=F \cap W_0(j_1)$.
By Claim~\ref{clm:dycksj1}, $\partial S'$ is a non-empty 
collection of curves of slope $-1/j_0$
and $\partial F'$ is a non-empty collection of curves of slope $-1/j_1$
on $T_2$ in $\partial W_0(j_1)$. Isotope $S',F'$ to intersect minimally.
Then no arc of $S' \cap F'$ is boundary parallel in either $S'$ or $F'$.
Let $A,B$ be the boundary of a regular neighborhood of $S',F'$ (resp.)
in $W_0(j_1)$. Then $A$ and $B$ are both punctured surfaces of genus
$2$. Consider the graphs of intersection $G_A,G_B$. Neither $G_A$ nor
$G_B$ have monogons. The valence of each vertex of $G_A$ is 
$|j_0-j_0'| |\partial B|$. Let $\widetilde{G_A}$ be the reduced graph
of $G_A$. By Claim~\ref{claim:paralleledges2}, 
the valence of some vertex of $\widetilde{G_A}$ is at most $18$. 
This implies that there must be a group of parallel edges $\mathcal{E}$ in
$G_A$ with cardinality $|j_0 - j_0'| |\partial B|/18$. Let $G_B(\mathcal{E})$
be the subgraph of $G_B$ gotten from the edges corresponding to $\mathcal{E}$.
The vertices of $G_B(\mathcal{E})$ have valence $|j_0-j_0'|/18 > 18$. Thus
again Claim~\ref{claim:paralleledges2}, implies that two edges of 
$G_B(\mathcal{E})$ are 
parallel in $G_B$. Once again, the disks bounded on $A,B$ by an innermost 
pair of such edges, gives rise to a \mobius band properly embedded in 
$W_0(j_1)$. But
this contradicts that $W_0(j_1)$ is hyperbolic.
\end{proof}  

Recall that as long as $|j_0|,|j_1| \geq \nu$, $M_{j_0,j_1}$ is hyperbolic.
As $W_0$ is hyperbolic, there is constant $\nu'$ such that if $|j_1|>\nu'$
then $W_0(j_1)$ is hyperbolic. Fix $|j_1| > \max\{\nu,\nu',108\}$. By 
Claim~\ref{clm:dycksj1}, for all but finitely values of $j_0$, $M_{j_0,j_1}$
will be hyperbolic and contain no Dyck's surface.
\end{proof}

\providecommand{\bysame}{\leavevmode\hbox to3em{\hrulefill}\thinspace}
\providecommand{\MR}{\relax\ifhmode\unskip\space\fi MR }
\providecommand{\MRhref}[2]{%
  \href{http://www.ams.org/mathscinet-getitem?mr=#1}{#2}
}
\providecommand{\href}[2]{#2}

\end{document}